\newtheorem{theorem}{Theorem}
\theoremstyle{plain}
\newtheorem{corollary}{Corollary}
\newtheorem{definition}{Definition}
\newtheorem{lemma}{Lemma}
\newtheorem{proposition}{Proposition}
\newtheorem{remark}{Remark}
\DeclareMathOperator{\Div}{div}
 \numberwithin{equation}{section}
\begin{document}
\title[Homogenization of Ladyzhenskaya equations]{Almost periodic
homogenization of a generalized Ladyzhenskaya model for incompressible
viscous flow}
\author{Hermann Douanla}
\address{H. Douanla, Department of Mathematical Sciences, Chalmers
University of Technology, Gothenburg, SE-41296, Sweden}
\email{douanla@chalmers.se}
\author{Jean Louis Woukeng}
\address{J.L. Woukeng, Department of Mathematics and Computer Science,
University of Dschang, P.O. Box 67, Dschang, Cameroon}
\email{jwoukeng@yahoo.fr}
\date{}
\subjclass[2000]{35B40, 46J10, 76D05}
\keywords{Homogenization, Almost periodic, Navier-Stokes equations}

\begin{abstract}
The paper deals with the existence and almost periodic homogenization of
some model of generalized Navier-Stokes equations. We first establish an
existence result for non-stationary Ladyzhenskaya equations with a given non
constant density and an external force depending nonlinearly on the
velocity. Next, the density of the fluid being non constant, we combine some
compactness arguments with the sigma-convergence method to study the
asymptotic behavior of the velocity field.
\end{abstract}

\maketitle

\section{Introduction}

The Navier-Stokes equations model the motion of Newtonian fluids. In order
to understand the phenomenon of turbulence related to the motion of a fluid,
several mathematical models have been developed and studied over the years.
We refer to, e.g. \cite{BBN, He, Gunz, Ladyz, Lions}, just to cite a few.

To investigate the turbulence in non-Newtonian fluid, we consider a model
close to the Ladyzhenskaya one \cite{Ladyz}. In this model, the viscosity
and the density are non constant. To be more precise, let $\varepsilon>0$ be
a small parameter representing the scale of the inhomogeneities. The
equation of the motion reads 
\begin{equation}
\rho ^{\varepsilon }\frac{\partial \boldsymbol{u}_{\varepsilon }}{\partial t}%
-\Div\left( a^{\varepsilon }\nabla \boldsymbol{u}_{\varepsilon
}+b^{\varepsilon }\left\vert \nabla \boldsymbol{u}_{\varepsilon }\right\vert
^{p-2}\nabla \boldsymbol{u}_{\varepsilon }\right) +(\boldsymbol{u}%
_{\varepsilon }\cdot \nabla )\boldsymbol{u}_{\varepsilon }+\nabla
q_{\varepsilon }=\rho ^{\varepsilon }f^{\varepsilon }(\cdot ,\boldsymbol{u}%
_{\varepsilon })\text{ in }Q_{T}  \label{1}
\end{equation}%
where $\rho $ is the non constant positive known density which is bounded
from above and from below away from zero, $\boldsymbol{u}_{\varepsilon }$
and $q_{\varepsilon }$ are the unknown velocity and pressure, respectively.
The viscosities $a=(a_{ij})_{1\leq i,j\leq N}$ and $b$ both depend on the
spatial and time variables. The matrix $a$ is coercive whereas the function $%
b$ is positive, bounded from above and from below away from zero. The
density and the viscosities are scaled as follows: $\rho ^{\varepsilon }(x)
=\rho \left( \frac{x}{\varepsilon }\right) ,\ a_{ij}^{\varepsilon
}(x,t)=a_{ij}\left( \frac{x}{\varepsilon },\frac{t}{ \varepsilon ^{2}}%
\right) \text{ and }b^{\varepsilon }(x,t) =b\left( \frac{x}{\varepsilon },%
\frac{t}{\varepsilon ^{2}}\right)$ whereas the external force $%
f^{\varepsilon }(\cdot,\boldsymbol{u} _{\varepsilon })$ which is a Lipschitz
function of the velocity $\boldsymbol{u}_{\varepsilon }$ is scaled as
follows $f^{\varepsilon }(\cdot ,r)(t)=f\left( \frac{t}{\varepsilon ^{2}}%
,r\right)\ \text{ for }(x,t)\in Q_{T}$ and $r\in\mathbb{R}^N$. The problem
is stated in details in the beginning of section 2.

The equation (\ref{1}) models various types of motion of non-Newtonian
fluids. We cite a few examples. If $a=(\nu _{0}\delta _{ij})_{1\leq i,j\leq
N}$ ($\delta _{ij}$ the Kronecker delta) and $b=\nu _{1}$ where $\nu _{0}$
and $\nu _{1}$ are positive constants, then we get the Ladyzhenskaya
equations. In that case, the analysis conducted in \cite{Gunz} reveals that
one may either let $\nu _{1}\rightarrow 0$ (and get the usual Navier-Stokes
equations) or let $\nu _{0}\rightarrow 0$, and hence we are led to the
power-law fluids equations. In particular when $p=3$, we get the
Smagorinski's model of turbulence \cite{SM} with $\nu _{0}$ being the
molecular viscosity and $\nu _{1}$ the turbulent viscosity. Another model
included in (\ref{1}) is the equation of incompressible bipolar fluids \cite%
{BBN}.

There are several works dealing with the Navier-Stokes equations with
external force depending on the velocity. These types of equations are
commonly known as the \textit{generalized} Navier-Stokes equations. This
work is different from the previous ones in the sense that it combines both
a non-constant density, the non-Newtonian fluid effect, and the external
force depending on the velocity. It should be noted that if in equation (\ref%
{1}) we replace the gradient $\nabla \boldsymbol{u}$ by its symmetric part $%
\frac{1}{2}(\nabla \boldsymbol{u}+\nabla ^{T}\boldsymbol{u})$, then thanks
to the Korn's inequality, the mathematical analysis does not change,
although the model becomes in this case, physical.

Started in the 70's the mathematical theory of Homogenization is nowadays
divided into two major components: the \emph{individual} homogenization
theory (also known as the \emph{deterministic} homogenization theory) and
the \emph{random} homogenization theory (also known as the \emph{stochastic}
homogenization theory). In this paper we are concerned with a special case
of individual homogenization theory, namely the almost periodic one. More
precisely, we assume throughout the paper that the density $\rho $, the
viscosities $a$ and $b$ and the source term $f$ are almost periodic
functions as specified in the beginning of Section 4. It should be noted
that we could consider a more general deterministic setting with
coefficients satisfying general deterministic assumptions covering a large
set of concrete behaviors (see e.g., \cite{Douanla1, Hom1, CMP}) such as the
periodic one, the almost periodic one, the convergence at infinity and many
more besides. We only deal with almost periodic homogenization for the sake
of simplicity. Our results carry over mutatis mutandis to the general
deterministic setting. Concerning the homogenization of (\ref{1}), it should
also be stressed that this is the first time that such an analysis is
conducted beyond the periodic setting.

The goal of this paper is twofold. We first establish an existence result
for (\ref{1}) and then perform the homogenization process for (\ref{1}) in
the almost periodic setting. The space dimension is either $2$ or $3$. The
only place where the analysis changes depending on the dimension is the
proof of the inequality (\ref{0.7}) which is very useful in the proofs of
the existence result, theorem \ref{t1.1} and the compactness result,
Proposition \ref{p2.2}. We stress that the existence result Theorem \ref%
{t1.1} is not solving the millennium problem of the existence and smoothness
of the Navier-Stokes equations as stated by Fefferman \cite{Fefferman} since
we only consider weak solutions here with no regularity result.

The paper is organized as follows. Section 2 deals with the complete
statement of the problem, the proof of the existence result and some a
priori estimates. In Section 3 we gather some necessary tools about the $%
\Sigma $-convergence method (which is just the appropriate generalization of
the well-known two-scale convergence method) in the algebra of continuous
almost periodic functions. Finally in Section 4, we state and prove the
homogenization result.

Throughout Section 3, vector spaces are assumed to be complex vector spaces,
and scalar functions are assumed to take complex values. We shall always
assume that the numerical space $\mathbb{R}^{m}$ (integer $m\geq 1$) and its
open sets are each equipped with the Lebesgue measure $dx=dx_{1}...dx_{m}$.

\section{Statement of the problem: existence result and a priori estimates}

\subsection{Problem setting: existence result}

We consider $N$-dimensional problem, $N=2,3$. In what follows, all the
function spaces are real-valued spaces and scalar functions assume real
values.

Let $1+\frac{2N}{N+2}\leq p<\infty $, and $T>0$ a real number. Let $%
Q_{T}=Q\times \left( 0,T\right) $ where $Q$ is a bounded smooth domain in $%
\mathbb{R}^{N}$. We consider the following well-known spaces \cite{Lions,
Temam}: $\mathcal{V}=\{\boldsymbol{\varphi }\in \mathcal{C}_{0}^{\infty
}(Q)^{N}:\Div\boldsymbol{\varphi }=0\}$; $\mathbb{V}=$ closure of $\mathcal{V%
}$ in $W^{1,p}(Q)^{N}$; $\mathbb{H}=$ closure of $\mathcal{V}$ in $%
L^{2}(Q)^{N}$. In view of the smoothness of $Q$, it is known that $\mathbb{V}%
=\{\boldsymbol{u}\in W_{0}^{1,p}(Q)^{N}:\Div\boldsymbol{u}=0\}$ and $\mathbb{%
H}=\{\boldsymbol{u}\in L^{2}(Q)^{N}:\Div\boldsymbol{u}=0$ and $\left. 
\boldsymbol{u}\right\vert _{\partial Q}\cdot n=0\}$, where $\left. 
\boldsymbol{u}\right\vert _{\partial Q}$ denotes the trace of $\boldsymbol{u}
$ on $\partial Q$ and $n$ is the outward unit vector normal to $\partial Q$.
The space $\mathbb{V}$ is hence endowed with the gradient norm and $\mathbb{H%
}$ with the $L^{2}$-norm. For the sake of simplicity, we denote the
respective norms of $\mathbb{V}$ and $\mathbb{H}$ by $\left\Vert \cdot
\right\Vert $ and $\left\vert \cdot \right\vert $. We denote by $\left(
,\right) $ the inner product in $\mathbb{H}$, as well as the scalar product
in $\mathbb{R}^{N}$. The associated Euclidean norm in $\mathbb{R}^{N}$ is
also denoted by $\left\vert \cdot \right\vert $. All duality pairing will be
denoted by $\left\langle ,\right\rangle $ without any specification of the
spaces involved.

With all this in mind, we shall consider the functions $a$, $b$, $\rho $ and 
$f$ constrained as follows.

\begin{itemize}
\item[(\textbf{A1})] The matrix $a=(a_{ij})_{1\leq i,j\leq N}\in L^{\infty }(%
\mathbb{R}^{N+1})^{N\times N}$ satisfies $a_{ij}=a_{ji}$ and 
\begin{equation*}
(a(y,\tau )\lambda )\cdot \lambda \geq \nu _{0}\left\vert \lambda
\right\vert ^{2}\text{ for all }\lambda \in \mathbb{R}^{N}\text{ and a.e. }%
(y,\tau )\in \mathbb{R}^{N+1}
\end{equation*}%
for some positive $\nu _{0}$.

\item[(\textbf{A2})] The function $b\in L^{\infty }(\mathbb{R}^{N+1})$ is
such that $\nu _{1}\leq b\leq \nu _{2}$ for some positive $\nu _{1}$ and $%
\nu _{2}$.

\item[(\textbf{A3})] The density function $\rho$ belongs to $\in L^{\infty }(%
\mathbb{R}^{N})$ and satisfies $\Lambda ^{-1}\leq \rho (y)\leq \Lambda $
a.e. $y\in \mathbb{R}^{N}$, for some positive $\Lambda $.

\item[(\textbf{A4})] The mapping $f:\mathbb{R}\times \mathbb{R}%
^{N}\rightarrow \mathbb{R}^{N}$, $(t,r)\mapsto f(t,r)$ is continuous with
the following properties:

\begin{itemize}
\item[(i)] $f$ maps continuously $\mathbb{R}\times \mathbb{H}$ into $\mathbb{%
H}$ ($f(\tau ,u)\in \mathbb{H}$ for any $u\in \mathbb{H}$)

\item[(ii)] There is a positive constant $k$ such that 
\begin{equation*}
\begin{array}{l}
\left\vert f(\tau ,0)\right\vert \leq k\text{ for all }\tau \in \mathbb{R}
\\ 
\left\vert f(\tau ,r_{1})-f(\tau ,r_{2})\right\vert \leq k\left\vert
r_{1}-r_{2}\right\vert \text{ for all }r_{1},r_{2}\in \mathbb{R}^{N}\text{
and }\tau \in \mathbb{R}\text{.}%
\end{array}%
\end{equation*}
\end{itemize}
\end{itemize}

\noindent We deduce from [part (ii) of] (\textbf{A4}) the existence of a
positive constant $c$ such that $\left\vert f(\tau ,r)\right\vert \leq
c(1+\left\vert r\right\vert )$ for any $(\tau ,r)\in \mathbb{R}\times 
\mathbb{R}^{N}$. The scaled functions $\rho ^{\varepsilon }\in L^{\infty
}(Q) $, $a^{\varepsilon }=(a_{ij}^{\varepsilon })_{1\leq i,j\leq N}\in
L^{\infty }(Q_{T})^{N\times N}$, $f^{\varepsilon }(\cdot ,r)\in \mathcal{C}%
(0,T)$ (for any $r\in \mathbb{R}^{N}$) and $b^{\varepsilon }\in L^{\infty
}(Q_{T})$ are defined as follows: 
\begin{eqnarray*}
\rho ^{\varepsilon }(x) &=&\rho \left( \frac{x}{\varepsilon }\right) ,\
a_{ij}^{\varepsilon }(x,t)=a_{ij}\left( \frac{x}{\varepsilon },\frac{t}{%
\varepsilon ^{2}}\right) ,\ f^{\varepsilon }(\cdot ,r)(t)=f\left( \frac{t}{%
\varepsilon ^{2}},r\right) \text{ and } \\
b^{\varepsilon }(x,t) &=&b\left( \frac{x}{\varepsilon },\frac{t}{\varepsilon
^{2}}\right) \text{ for }(x,t)\in Q_{T}.
\end{eqnarray*}%
Let $\varepsilon>0$ be a small parameter. We will make use of the following
notation 
\begin{equation*}
\left( \boldsymbol{u},\boldsymbol{v}\right)_{\varepsilon } =\int_{Q}\rho
^{\varepsilon }(x)\boldsymbol{u}(x)\cdot \boldsymbol{v}(x)dx\ \ \ (%
\boldsymbol{u},\boldsymbol{v}\in \mathbb{H}),
\end{equation*}%
which defines an inner product in $\mathbb{H}$ making it a Hilbert space.
The associated norm is denoted in the sequel by $| \cdot |_{\varepsilon }$
and is obviously equivalent to the natural norm of $\mathbb{H}$ denoted by $%
|\cdot|$.

%
% Next, we
%define the \textit{energy space} $\mathbb{V}_{\varepsilon
%}=\mathbb{H}_{\varepsilon }\cap V$, a reflexive Banach under the
%norm
%\begin{equation*}
%\left\Vert \boldsymbol{u}\right\Vert _{\mathbb{V}_{\varepsilon }}=\left(
%\left\Vert \boldsymbol{u}\right\Vert _{L^{2}(Q)^{N}}^{2}+\left\Vert \nabla
%\boldsymbol{u}\right\Vert _{L^{p}(Q)}^{2}\right) ^{\frac{1}{2}}\ \ (%
%\boldsymbol{u}\in \mathbb{V}_{\varepsilon }).
%\end{equation*}

Given $u_0\in \mathbb{H}$, we are interested in the asymptotic behavior of
the sequence of velocity field $(\boldsymbol{u}_{\varepsilon })_{\varepsilon
>0}$ of the following generalized Ladyzhenskaya equation 
\begin{equation}
\begin{array}{l}
\rho ^{\varepsilon }\frac{\partial \boldsymbol{u}_{\varepsilon }}{\partial t}%
-\Div\left( a^{\varepsilon }\nabla \boldsymbol{u}_{\varepsilon
}+b^{\varepsilon }\left\vert \nabla \boldsymbol{u}_{\varepsilon }\right\vert
^{p-2}\nabla \boldsymbol{u}_{\varepsilon }\right) +(\boldsymbol{u}%
_{\varepsilon }\cdot \nabla )\boldsymbol{u}_{\varepsilon }+\nabla
q_{\varepsilon }=\rho ^{\varepsilon }f^{\varepsilon }(\cdot ,\boldsymbol{u}%
_{\varepsilon })\text{ in }Q_{T} \\ 
\Div\boldsymbol{u}_{\varepsilon }=0\text{ in }Q_{T} \\ 
\boldsymbol{u}_{\varepsilon }=0\text{ on }\partial Q\times (0,T) \\ 
\boldsymbol{u}_{\varepsilon }(x,0)=\boldsymbol{u}^{_{0}}(x)\text{ in }Q.%
\end{array}
\label{0.1}
\end{equation}%
We introduce the functionals (for fixed $\varepsilon >0$) 
\begin{equation*}
a_{I}^{\varepsilon }(t;\boldsymbol{u},\boldsymbol{v})=\int_{Q}\left(
a^{\varepsilon }\nabla \boldsymbol{u}\right) \cdot \nabla \boldsymbol{v}%
dx+\int_{Q}b^{\varepsilon }\left\vert \nabla \boldsymbol{u}\right\vert
^{p-2}\nabla \boldsymbol{u}\cdot \nabla \boldsymbol{v}dx
\end{equation*}%
and 
\begin{equation*}
b_{I}(\boldsymbol{u},\boldsymbol{v},\boldsymbol{w})=\sum_{i,j=1}^{N}%
\int_{Q}u_{i}\frac{\partial v_{j}}{\partial x_{i}}w_{j}dx
\end{equation*}%
defined for $\boldsymbol{u},\boldsymbol{v},\boldsymbol{w}\in
W_{0}^{1,p}(Q)^{N}$. Then the following estimates hold: 
\begin{equation}
\left\vert a_{I}^{\varepsilon }(t;\boldsymbol{u},\boldsymbol{v})\right\vert
\leq \left\Vert a\right\Vert _{\infty }\left\Vert \nabla \boldsymbol{u}%
\right\Vert _{L^{2}(Q)}\left\Vert \nabla \boldsymbol{v}\right\Vert
_{L^{2}(Q)}+\nu _{2}\left\Vert \nabla \boldsymbol{u}\right\Vert
_{L^{p}(Q)}^{p-1}\left\Vert \nabla \boldsymbol{v}\right\Vert _{L^{p}(Q)},
\label{0.2}
\end{equation}%
\begin{equation}
a_{I}^{\varepsilon }(t;\boldsymbol{v},\boldsymbol{v})\geq \nu _{0}\left\Vert
\nabla \boldsymbol{v}\right\Vert _{L^{2}(Q)}^{2}+\nu _{1}\left\Vert \nabla 
\boldsymbol{v}\right\Vert _{L^{p}(Q)}^{p}  \label{0.3}
\end{equation}%
for all $\boldsymbol{u},\boldsymbol{v}\in W_{0}^{1,p}(Q)^{N}$. Since $p\geq
1+\frac{2N}{N+2}$ (hence $p\geq 2$) we have $L^{p}(Q)\hookrightarrow
L^{2}(Q) $ so that, by the estimate (\ref{0.2}) we infer the existence of an
operator $\mathcal{A}^{\varepsilon }(t):\mathbb{V} \rightarrow \mathbb{V}%
^{\prime }$ such that 
\begin{equation*}
a_{I}^{\varepsilon }(t;\boldsymbol{u},\boldsymbol{v})=\left\langle \mathcal{A%
}^{\varepsilon }(t)\boldsymbol{u},\boldsymbol{v}\right\rangle \text{ for all 
}\boldsymbol{u},\boldsymbol{v}\in \mathbb{V}\text{.}
\end{equation*}%
Hence the existence of a bounded operator $\mathcal{A}^{\varepsilon
}:L^{p}(0,T;\mathbb{V})\rightarrow L^{p^{\prime }}(0,T;\mathbb{V}^{\prime })$
such that 
\begin{equation*}
\left\langle \mathcal{A}^{\varepsilon }\boldsymbol{u},\boldsymbol{v}%
\right\rangle =\int_{0}^{T}\left\langle \mathcal{A}^{\varepsilon }(t)%
\boldsymbol{u}(t),\boldsymbol{v}(t)\right\rangle dt\text{ for all }%
\boldsymbol{u},\boldsymbol{v}\in L^{p}(0,T;\mathbb{V}).
\end{equation*}

Now, dealing with the trilinear functional $b_{I}$, we have that \cite%
{Lions, Temam}%
\begin{eqnarray*}
b_{I}(\boldsymbol{u},\boldsymbol{v},\boldsymbol{v}) &=&0\text{ for all }%
\boldsymbol{u}\in \mathbb{V}\text{ and }\boldsymbol{v}\in W_{0}^{1,p}(Q)^{N}
\\
b_{I}(\boldsymbol{u},\boldsymbol{u},\boldsymbol{v}) &=&-b_{I}(\boldsymbol{u},%
\boldsymbol{v},\boldsymbol{u})\text{ for all }\boldsymbol{u}\in \mathbb{V}%
\text{ and }\boldsymbol{v}\in W_{0}^{1,p}(Q)^{N}\text{.}
\end{eqnarray*}%
The following Sobolev embedding holds true \cite{Adams}: $%
W^{1,p}(Q)\hookrightarrow L^{r}(Q)$ for $\frac{1}{r}=\frac{1}{p}-\frac{1}{N}$
if $\frac{1}{p}-\frac{1}{N}>0$, and for any $r>1$ if $\frac{1}{p}-\frac{1}{N}%
\leq 0$. In view of the choice of $p$ and $N$, we may always find $r>1$ such
that $\frac{2}{r}+\frac{1}{p}=1$, and hence, using H\"{o}lder's inequality,
we get 
\begin{equation*}
\left\vert b_{I}(\boldsymbol{u},\boldsymbol{u},\boldsymbol{v})\right\vert
=\left\vert b_{I}(\boldsymbol{u},\boldsymbol{v},\boldsymbol{u})\right\vert
\leq c\left\Vert \boldsymbol{u}\right\Vert _{L^{r}(Q)}^{2}\left\Vert \nabla 
\boldsymbol{v}\right\Vert _{L^{p}(Q)}\text{\ (}\boldsymbol{u},\boldsymbol{v}%
\in \mathbb{V}\text{).}
\end{equation*}%
We infer from the above inequality the existence of an element $B(%
\boldsymbol{u})\in \mathbb{V}^{\prime }$ such that 
\begin{equation*}
\left\langle B(\boldsymbol{u}),\boldsymbol{v}\right\rangle =b_{I}(%
\boldsymbol{u},\boldsymbol{u},\boldsymbol{v})\text{ for all }\boldsymbol{u},%
\boldsymbol{v}\in \mathbb{V}\text{.}
\end{equation*}%
This defines a bounded operator $B:\mathbb{V}\rightarrow \mathbb{V}^{\prime
} $ verifying the further property that, if $\boldsymbol{u}\in L^{p}(0,T;%
\mathbb{V})$ then $B(\boldsymbol{u})\in L^{p^{\prime }}(0,T;\mathbb{V}%
^{\prime })$. Indeed taking $\boldsymbol{u}\in L^{p}(0,T;\mathbb{V})$ we
have by the H\"{o}lder's inequality 
\begin{equation}
\left\Vert B(\boldsymbol{u})\right\Vert _{L^{p^{\prime }}(0,T;\mathbb{V}%
^{\prime })}\leq \left( \int_{0}^{T}\left\Vert \boldsymbol{u}(t)\right\Vert
_{L^{r}(Q)}^{2p^{\prime }}dt\right) ^{1/p^{\prime }}.  \label{0.4'}
\end{equation}%
But since $W^{1,p}(Q)\hookrightarrow L^{r}(Q)$, there is a positive constant 
$c_{0}$ independent of $\boldsymbol{u}$ such that 
\begin{equation*}
\left\Vert B(\boldsymbol{u})\right\Vert _{L^{p^{\prime }}(0,T;\mathbb{V}%
^{\prime })}\leq c_{0}\left( \int_{0}^{T}\left\Vert \boldsymbol{u}%
(t)\right\Vert ^{2p^{\prime }}dt\right) ^{1/p^{\prime }}
\end{equation*}%
where we recall that $\left\Vert \cdot \right\Vert $ stands for the norm in $%
\mathbb{V}$. We distinguish two situations: the case when $p\geq 3$ and the
case when $1+\frac{2N}{N+2}\leq p<3$. If $p\geq 3$, then $2p^{\prime }\leq p$%
, hence the H\"{o}lder's inequality again gives \ (noting that $\frac{p}{%
2p^{\prime }}\geq 1$) 
\begin{equation*}
\left( \int_{0}^{T}\left\Vert \boldsymbol{u}(t)\right\Vert ^{2p^{\prime
}}dt\right) ^{1/p^{\prime }}\leq c_{1}\left( \int_{0}^{T}\left\Vert 
\boldsymbol{u}(t)\right\Vert ^{p}dt\right) ^{2/p},
\end{equation*}%
so that 
\begin{equation}
\left\Vert B(\boldsymbol{u})\right\Vert _{L^{p^{\prime }}(0,T;\mathbb{V}%
^{\prime })}\leq c_{2}\left\Vert \boldsymbol{u}\right\Vert _{L^{p}(0,T;%
\mathbb{V})}^{2}.  \label{0.4}
\end{equation}%
Now, if $1+\frac{2N}{N+2}\leq p<3$, then it is easy to get an inequality
similar to (\ref{0.4}) for $N=2$. So we shall only deal with the case $N=3$.
With this in mind, returning to the inequality (\ref{0.4'}) which is true
for $r=2p^{\prime }$, we have 
\begin{equation*}
\left\Vert B(\boldsymbol{u})\right\Vert _{L^{p^{\prime }}(0,T;\mathbb{V}%
^{\prime })}\leq c_{2}\left( \int_{0}^{T}\left\Vert \boldsymbol{u}%
(t)\right\Vert _{L^{2p^{\prime }}(Q)}^{2p^{\prime }}dt\right) ^{2/p}.
\end{equation*}%
But \cite{Ladyz} (see also \cite[Lemma 4.1]{Gunz}) 
\begin{equation}
\left\Vert u\right\Vert _{L^{2p^{\prime }}(Q)}\leq C\left\Vert u\right\Vert
_{W^{1,p}(Q)}^{\alpha }\left\Vert u\right\Vert _{L^{2}(Q)}^{1-\alpha }\text{
for any }u\in W^{1,p}(Q)  \label{0.5'}
\end{equation}%
where $\alpha =3/(5p-6)$. But if $\boldsymbol{u}\in L^{p}(0,T;\mathbb{V}%
)\cap L^{\infty }(0,T;\mathbb{H})$, the function $t\mapsto \left\Vert 
\boldsymbol{u}(t)\right\Vert ^{2\alpha }\left\vert \boldsymbol{u}%
(t)\right\vert ^{2-2\alpha }$ lies in $L^{r_{0}}(0,T)$ for $r_{0}\geq 1$
satisfying $\frac{1}{r_{0}}=\frac{2\alpha }{p}+\frac{1}{\infty }$, and more
generally, in any $L^{r}(0,T)$ for $1<r\leq \frac{p}{2\alpha }$. Since $%
p^{\prime }\leq \frac{p}{2\alpha }$ (for $3>p\geq \frac{11}{5}$) we have
that it belongs to $L^{p^{\prime }}(0,T)$. Using H\"{o}lder's inequality in (%
\ref{0.5'}) (with exponent $r=\frac{p}{2\alpha p^{\prime }}$) we have 
\begin{equation*}
\left\Vert B(\boldsymbol{u})\right\Vert _{L^{p^{\prime }}(0,T;\mathbb{V}%
^{\prime })}^{p^{\prime }}\leq C\left( \int_{0}^{T}\left\Vert \boldsymbol{u}%
(t)\right\Vert ^{p}dt\right) ^{\frac{2\alpha p^{\prime }}{p}}\left(
\int_{0}^{T}\left\vert \boldsymbol{u}(t)\right\vert ^{(2-2\alpha )p^{\prime
}r^{\prime }}dt\right) ^{\frac{1}{r^{\prime }}}
\end{equation*}%
(where $r^{\prime }=r/(r-1)$), or equivalently, 
\begin{equation}
\left\Vert B(\boldsymbol{u})\right\Vert _{L^{p^{\prime }}(0,T;\mathbb{V}%
^{\prime })}\leq C\left\Vert \boldsymbol{u}\right\Vert _{L^{p}(0,T;\mathbb{V}%
)}^{2\alpha }\left\Vert \boldsymbol{u}\right\Vert _{L^{\infty }(0,T;\mathbb{H%
})}^{2-2\alpha }  \label{0.6'}
\end{equation}%
for any $\boldsymbol{u}\in L^{p}(0,T;\mathbb{V})\cap L^{\infty }(0,T;\mathbb{%
H})$.

%It is an easy exercise to check that the mapping $B$ defined above maps $%
%\mathbb{V}_{\varepsilon }$ into $\mathbb{V}_{\varepsilon }^{\prime }$ and
%further satisfies the same properties as above when replacing $V$ (resp. $H$%
%) by $\mathbb{V}_{\varepsilon }$ (resp. $\mathbb{H}_{\varepsilon }$).
We have proven the following inequalities: 
\begin{equation}
\left\{ 
\begin{array}{l}
\text{If }p\geq 3\text{, }\left\Vert B(\boldsymbol{u})\right\Vert
_{L^{p^{\prime }}(0,T;\mathbb{V}^{\prime })}\leq c_{2}\left\Vert \boldsymbol{%
u}\right\Vert _{L^{p}(0,T;\mathbb{V})}^{2}\text{ for all }\boldsymbol{u}\in
L^{p}(0,T;\mathbb{V}); \\ 
\text{If }1+\frac{2N}{N+2}\leq p<3\text{, }\left\Vert B(\boldsymbol{u}%
)\right\Vert _{L^{p^{\prime }}(0,T;\mathbb{V}^{\prime })}\leq C\left\Vert 
\boldsymbol{u}\right\Vert _{L^{p}(0,T;\mathbb{V})}^{2\alpha }\left\Vert 
\boldsymbol{u}\right\Vert _{L^{\infty }(0,T;\mathbb{H})}^{2-2\alpha } \\ 
\text{for all }\boldsymbol{u}\in L^{p}(0,T;\mathbb{V})\cap L^{\infty }(0,T;%
\mathbb{H}).%
\end{array}%
\right.  \label{0.7}
\end{equation}

The above inequalities will be very useful in the sequel. For the sake of
completeness, we choose $\boldsymbol{u}^{0}\in \mathbb{H}$. We are therefore
concerned with the existence of a solution of (\ref{0.1}). The first result
of the work is the following

\begin{theorem}
\label{t1.1}Let $1+\frac{2N}{N+2}\leq p<\infty $. Suppose $\boldsymbol{u}%
^{0}\in \mathbb{H}$. Under assumptions (\textbf{A1})\textbf{-}(\textbf{A4}),
there exists (for each fixed $\varepsilon >0$) a couple $(\boldsymbol{u}%
_{\varepsilon },q_{\varepsilon })\in L^{p}(0,T;\mathbb{V})\cap L^{\infty
}(0,T;\mathbb{H})\times W^{-1,\infty }(0,T;L^{p^{\prime }}(Q))$ solution to 
\emph{(\ref{0.1})}. The function $\boldsymbol{u}_{\varepsilon }$ also
belongs to $\mathcal{C}([0,T];\mathbb{H})$ and $q_{\varepsilon }$ is unique
up to a constant function of $x$: $\int_{Q}q_{\varepsilon }dx=0$.
\end{theorem}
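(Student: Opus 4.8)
The plan is to construct $(\boldsymbol{u}_{\varepsilon },q_{\varepsilon })$ for a fixed $\varepsilon >0$ by the Faedo--Galerkin method, so we suppress the subscript $\varepsilon $ and treat $\rho ^{\varepsilon },a^{\varepsilon },b^{\varepsilon },f^{\varepsilon }$ as fixed coefficients. We work with the Gelfand triple $\mathbb{V}\hookrightarrow \mathbb{H}\hookrightarrow \mathbb{V}^{\prime }$ in which $\mathbb{H}$ carries the weighted inner product $(\cdot ,\cdot )_{\varepsilon }$ (equivalent to the usual one by (\textbf{A3})), so that ``the time derivative of $\rho ^{\varepsilon }\boldsymbol{u}$'' is read as the $\mathbb{V}^{\prime }$-valued derivative of $\boldsymbol{u}$ in this triple; this detour is forced by the fact that $\rho ^{\varepsilon }$ is merely bounded. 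Pick a free total sequence $(\boldsymbol{w}_{j})_{j\geq 1}$ in $\mathbb{V}$, set $\mathbb{V}_{m}=\mathrm{span}\{\boldsymbol{w}_{1},\dots ,\boldsymbol{w}_{m}\}$, and look for $\boldsymbol{u}_{m}(t)=\sum_{j=1}^{m}g_{jm}(t)\boldsymbol{w}_{j}$ with $\boldsymbol{u}_{m}(0)$ the $(\cdot ,\cdot )_{\varepsilon }$-orthogonal projection of $\boldsymbol{u}^{0}$ onto $\mathbb{V}_{m}$ and
\[
(\rho ^{\varepsilon }\boldsymbol{u}_{m}^{\prime }(t),\boldsymbol{w}_{j})+a_{I}^{\varepsilon }(t;\boldsymbol{u}_{m},\boldsymbol{w}_{j})+b_{I}(\boldsymbol{u}_{m},\boldsymbol{u}_{m},\boldsymbol{w}_{j})=(\rho ^{\varepsilon }f^{\varepsilon }(t,\boldsymbol{u}_{m}),\boldsymbol{w}_{j}),\qquad 1\leq j\leq m.
\]
The mass matrix $((\rho ^{\varepsilon }\boldsymbol{w}_{j},\boldsymbol{w}_{k}))_{j,k}$ is positive definite by (\textbf{A3}) and the right-hand side is a Carath\'{e}odory function of $(g_{1m},\dots ,g_{mm})$ (continuity of $f$ and of $\boldsymbol{\xi }\mapsto |\boldsymbol{\xi }|^{p-2}\boldsymbol{\xi }$), so the system has an absolutely continuous solution on some $[0,T_{m})$.

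First I would derive the a priori estimates. Testing the $m$-th equation with $\boldsymbol{u}_{m}$, using $b_{I}(\boldsymbol{u}_{m},\boldsymbol{u}_{m},\boldsymbol{u}_{m})=0$, the coercivity (\ref{0.3}) and the bound $|(\rho ^{\varepsilon }f^{\varepsilon }(t,\boldsymbol{u}_{m}),\boldsymbol{u}_{m})|\leq C(1+|\boldsymbol{u}_{m}|^{2})$ coming from (\textbf{A4}), one gets
\[
\tfrac{1}{2}\tfrac{d}{dt}|\boldsymbol{u}_{m}(t)|_{\varepsilon }^{2}+\nu _{0}\left\Vert \nabla \boldsymbol{u}_{m}\right\Vert _{L^{2}(Q)}^{2}+\nu _{1}\left\Vert \nabla \boldsymbol{u}_{m}\right\Vert _{L^{p}(Q)}^{p}\leq C\left( 1+|\boldsymbol{u}_{m}(t)|_{\varepsilon }^{2}\right) ,
\]
whence, by Gronwall's lemma, a bound for $(\boldsymbol{u}_{m})$ in $L^{\infty }(0,T;\mathbb{H})$ (so $T_{m}=T$) and, after integration in $t$, in $L^{p}(0,T;\mathbb{V})$. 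Then (\ref{0.2}) bounds $\mathcal{A}^{\varepsilon }\boldsymbol{u}_{m}$ in $L^{p^{\prime }}(0,T;\mathbb{V}^{\prime })$, the inequalities (\ref{0.7}) are exactly what is needed to bound $B(\boldsymbol{u}_{m})$ in $L^{p^{\prime }}(0,T;\mathbb{V}^{\prime })$, and the sublinear growth of $f$ bounds $\rho ^{\varepsilon }f^{\varepsilon }(\cdot ,\boldsymbol{u}_{m})$ in $L^{\infty }(0,T;\mathbb{H})$; combining these with the Galerkin equation gives a uniform bound for $\boldsymbol{u}_{m}^{\prime }$ in $L^{p^{\prime }}(0,T;\mathbb{V}^{\prime })$ — here one either uses a special basis for which the $(\cdot ,\cdot )_{\varepsilon }$-orthogonal projections onto $\mathbb{V}_{m}$ are bounded on $\mathbb{V}$ uniformly in $m$, or derives translation-in-time estimates directly from the Galerkin system.

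Next I would pass to the limit. Extract a subsequence with $\boldsymbol{u}_{m}\rightharpoonup \boldsymbol{u}$ in $L^{p}(0,T;\mathbb{V})$, $\boldsymbol{u}_{m}\rightharpoonup \boldsymbol{u}$ weakly-$\ast $ in $L^{\infty }(0,T;\mathbb{H})$, $\boldsymbol{u}_{m}^{\prime }\rightharpoonup \boldsymbol{u}^{\prime }$ and $\mathcal{A}^{\varepsilon }\boldsymbol{u}_{m}\rightharpoonup \boldsymbol{\chi }$ in $L^{p^{\prime }}(0,T;\mathbb{V}^{\prime })$; since $\mathbb{V}\hookrightarrow \mathbb{H}$ is compact ($p\geq 2$), the Aubin--Lions lemma gives $\boldsymbol{u}_{m}\rightarrow \boldsymbol{u}$ strongly in $L^{2}(0,T;\mathbb{H})$ and a.e. in $Q_{T}$. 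Strong $L^{2}$-convergence yields $\boldsymbol{u}_{m}\otimes \boldsymbol{u}_{m}\rightarrow \boldsymbol{u}\otimes \boldsymbol{u}$ in $L^{1}(Q_{T})$, which together with (\ref{0.7}) (to pass from $\boldsymbol{\varphi }\in \mathcal{V}$ to $\boldsymbol{v}\in L^{p}(0,T;\mathbb{V})$ by density) identifies the limit of $b_{I}(\boldsymbol{u}_{m},\boldsymbol{u}_{m},\cdot )$ as $b_{I}(\boldsymbol{u},\boldsymbol{u},\cdot )$, while (\textbf{A4})(ii) and a.e. convergence give $\rho ^{\varepsilon }f^{\varepsilon }(\cdot ,\boldsymbol{u}_{m})\rightarrow \rho ^{\varepsilon }f^{\varepsilon }(\cdot ,\boldsymbol{u})$ in $L^{2}(Q_{T})$. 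In the limit $\rho ^{\varepsilon }\boldsymbol{u}^{\prime }+\boldsymbol{\chi }+B(\boldsymbol{u})=\rho ^{\varepsilon }f^{\varepsilon }(\cdot ,\boldsymbol{u})$ in $L^{p^{\prime }}(0,T;\mathbb{V}^{\prime })$, the initial condition $\boldsymbol{u}(0)=\boldsymbol{u}^{0}$ holds (from $\boldsymbol{u}_{m}(0)\rightarrow \boldsymbol{u}^{0}$ in $\mathbb{H}$), and $\boldsymbol{u}\in \mathcal{C}([0,T];\mathbb{H})$ by the standard regularity theorem for Gelfand triples applied to the weighted triple. It remains to prove $\boldsymbol{\chi }=\mathcal{A}^{\varepsilon }\boldsymbol{u}$, which is the monotonicity (Minty--Browder) step: using the energy identity for $\boldsymbol{u}_{m}$ (again $b_{I}(\boldsymbol{u}_{m},\boldsymbol{u}_{m},\boldsymbol{u}_{m})=0$), the convergence of the $f$-term, $|\boldsymbol{u}_{m}(0)|_{\varepsilon }\leq |\boldsymbol{u}^{0}|_{\varepsilon }$, the weak lower semicontinuity of $\boldsymbol{v}\mapsto |\boldsymbol{v}(T)|_{\varepsilon }^{2}$ (since $\boldsymbol{u}\in \mathcal{C}([0,T];\mathbb{H})$ and $\boldsymbol{u}_{m}(T)\rightharpoonup \boldsymbol{u}(T)$ in $\mathbb{H}$), and the energy identity for the limit (where $b_{I}(\boldsymbol{u},\boldsymbol{u},\boldsymbol{u})=0$ too), one gets $\limsup_{m}\int_{0}^{T}\langle \mathcal{A}^{\varepsilon }\boldsymbol{u}_{m},\boldsymbol{u}_{m}\rangle \leq \int_{0}^{T}\langle \boldsymbol{\chi },\boldsymbol{u}\rangle $; inserting this into $\int_{0}^{T}\langle \mathcal{A}^{\varepsilon }\boldsymbol{u}_{m}-\mathcal{A}^{\varepsilon }\boldsymbol{v},\boldsymbol{u}_{m}-\boldsymbol{v}\rangle \geq 0$ (monotonicity of $\mathcal{A}^{\varepsilon }$, from the coercivity of $a^{\varepsilon }$ and the monotonicity of $\boldsymbol{\xi }\mapsto |\boldsymbol{\xi }|^{p-2}\boldsymbol{\xi }$ together with $b^{\varepsilon }>0$), letting $m\rightarrow \infty $, then choosing $\boldsymbol{v}=\boldsymbol{u}-\lambda \boldsymbol{w}$ with $\lambda \downarrow 0$ and using hemicontinuity of $\mathcal{A}^{\varepsilon }$, gives $\boldsymbol{\chi }=\mathcal{A}^{\varepsilon }\boldsymbol{u}$.

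Finally, the pressure is recovered by de Rham's theorem: integrating $\partial _{t}(\rho ^{\varepsilon }\boldsymbol{u})-\Div(a^{\varepsilon }\nabla \boldsymbol{u}+b^{\varepsilon }|\nabla \boldsymbol{u}|^{p-2}\nabla \boldsymbol{u})+\Div(\boldsymbol{u}\otimes \boldsymbol{u})-\rho ^{\varepsilon }f^{\varepsilon }(\cdot ,\boldsymbol{u})=-\nabla q_{\varepsilon }$ in time, the left-hand primitive lies in $L^{\infty }(0,T;L^{2}(Q)^{N}+W^{-1,p^{\prime }}(Q)^{N})\subset L^{\infty }(0,T;W^{-1,p^{\prime }}(Q)^{N})$ (using $\boldsymbol{u}\otimes \boldsymbol{u}\in L^{p^{\prime }}(Q_{T})$ from (\ref{0.7})); since it annihilates the divergence-free test fields it equals $\nabla Q_{\varepsilon }(t)$ for some $Q_{\varepsilon }\in L^{\infty }(0,T;L^{p^{\prime }}(Q))$, normalized by $\int_{Q}Q_{\varepsilon }(t)\,dx=0$, and differentiating back gives $q_{\varepsilon }=\partial _{t}Q_{\varepsilon }\in W^{-1,\infty }(0,T;L^{p^{\prime }}(Q))$, determined up to an additive function of $t$ alone and hence unique under $\int_{Q}q_{\varepsilon }\,dx=0$. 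I expect the main obstacle to be the monotonicity step for the $|\nabla \boldsymbol{u}|^{p-2}\nabla \boldsymbol{u}$ term: one must run the energy balance at the limit while simultaneously controlling the time-derivative, convective and $f$-terms in the presence of the non-constant density — which is exactly why the weighted Gelfand triple and the cancellation $b_{I}(\cdot ,\cdot ,\cdot )=0$ are indispensable — and the subsidiary point of obtaining the uniform bound on $\boldsymbol{u}_{m}^{\prime }$ requires the special basis or the direct translation estimates mentioned above.
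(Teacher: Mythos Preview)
Your argument is correct and is, in effect, the content of the reference the paper invokes: the paper's own proof is a two-line reduction to the abstract variational problem (\ref{0.8}) followed by a citation of \cite[Theorem 2.1]{Caraballo1} for the existence of $\boldsymbol{u}_{\varepsilon}$, and then \cite[Proposition 5]{Simon2} for the pressure. What you have written out --- Galerkin approximation, energy estimate plus Gronwall, Aubin--Lions compactness, and the Minty--Browder identification of the weak limit of $\mathcal{A}^{\varepsilon}\boldsymbol{u}_{m}$ --- is precisely the machinery hidden behind that citation, with your weighted Gelfand triple playing the role of the modified inner product $(\cdot,\cdot)_{\varepsilon}$ that the paper also introduces. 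Your pressure recovery (integrate in time, apply de Rham in $W^{-1,p'}(Q)$, differentiate back) is exactly Simon's argument in \cite{Simon2}. So the two proofs coincide in substance; the paper is terse by outsourcing, while your version is self-contained and makes explicit where (\ref{0.7}) enters (bounding $B(\boldsymbol{u}_{m})$ and hence $\boldsymbol{u}_{m}'$), which the paper only alludes to.
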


\begin{proof}
Multiplying Eq. (\ref{0.1}) by $\boldsymbol{v}\in \mathbb{V}$ and
integrating over $(0,t)\times Q$ we get\emph{\ }%
\begin{equation*}
\left\{ 
\begin{array}{l}
\left( \boldsymbol{u}_{\varepsilon }(t),\boldsymbol{v}\right) _{\varepsilon
}+\int_{0}^{t}\left\langle \mathcal{A}^{\varepsilon }(s)\boldsymbol{u}%
_{\varepsilon }(s)+B(\boldsymbol{u}_{\varepsilon }(s)),\boldsymbol{v}%
\right\rangle ds=\left( \boldsymbol{u}^{0},\boldsymbol{v}\right)
_{\varepsilon } \\ 
+\int_{0}^{t}\left( f\left( \frac{s}{\varepsilon ^{2}},\boldsymbol{u}%
_{\varepsilon }(s)\right) ,\boldsymbol{v}\right) _{\varepsilon }ds\text{\
for all\emph{\ }}\boldsymbol{v}\in \mathbb{V}\text{\ and a.e.\emph{\ }}0<t<T%
\emph{,}%
\end{array}%
\right.
\end{equation*}%
which, in view of the properties of the operators $\mathcal{A}^{\varepsilon
} $ and $B$ (see especially (\ref{0.7})), amounts to find a function $%
\boldsymbol{u}_{\varepsilon }\in L^{p}(0,T;\mathbb{V})$ such that 
\begin{equation}
\frac{d\boldsymbol{u}_{\varepsilon }}{dt}+\mathcal{A}^{\varepsilon }%
\boldsymbol{u}_{\varepsilon }+B(\boldsymbol{u}_{\varepsilon
})=f^{\varepsilon }(\cdot ,\boldsymbol{u}_{\varepsilon })\text{ in }%
L^{p^{\prime }}(0,T;\mathbb{V}^{\prime })\text{,\ }\boldsymbol{u}%
_{\varepsilon }(0)=\boldsymbol{u}^{0}\text{ in }\mathbb{H}.  \label{0.8}
\end{equation}%
Conversely, a solution of (\ref{0.8}) will satisfy (\ref{0.1}) for a
suitable choice of $q_{\varepsilon }$ which shall be specified later.
Therefore, in view of the properties of the operators $\mathcal{A}%
^{\varepsilon }$ and $B$ (see once again (\ref{0.7})), we can argue as in 
\cite[Theorem 2.1]{Caraballo1} (see also \cite{Caraballo2}) to get the
existence of a solution to (\ref{0.1}) in the space $L^{p}(0,T;\mathbb{V}%
)\cap L^{\infty }(0,T;\mathbb{H})$. It is to be noted that the solution $%
\boldsymbol{u}_{\varepsilon }$ satisfies $\frac{\partial \boldsymbol{u}%
_{\varepsilon }}{\partial t}\in L^{p^{\prime }}(0,T;\mathbb{V}^{\prime })$,
so that, by a well-known result, $\boldsymbol{u}_{\varepsilon }\in \mathcal{C%
}([0,T];\mathbb{H})$.

For the existence of the pressure, we have $\rho ^{\varepsilon
}f^{\varepsilon }(\cdot ,\boldsymbol{u}_{\varepsilon })\in L^{2}(Q_{T})^{N}$%
, so that the necessary condition of \cite[Section 4]{Simon2} for the
existence of the pressure is satisfied. Now, coming back to (\ref{0.1}) and
denoting there 
\begin{equation*}
\boldsymbol{w}_{\varepsilon }=\rho ^{\varepsilon }f^{\varepsilon }(\cdot ,%
\boldsymbol{u}_{\varepsilon })-\rho ^{\varepsilon }\frac{\partial 
\boldsymbol{u}_{\varepsilon }}{\partial t}+\Div\left( a^{\varepsilon }\nabla 
\boldsymbol{u}_{\varepsilon }+b^{\varepsilon }\left\vert \nabla \boldsymbol{u%
}_{\varepsilon }\right\vert ^{p-2}\nabla \boldsymbol{u}_{\varepsilon
}\right) -(\boldsymbol{u}_{\varepsilon }\cdot \nabla )\boldsymbol{u}%
_{\varepsilon }
\end{equation*}%
we have $\left\langle \boldsymbol{w}_{\varepsilon },\boldsymbol{v}%
\right\rangle =0$ for all $\boldsymbol{v}\in \mathcal{V}$ (that is for all $%
\boldsymbol{v}\in \mathcal{C}_{0}^{\infty }(Q)^{N}$ with $\Div\boldsymbol{v}%
=0$) where $\left\langle \cdot ,\cdot \right\rangle $ is the duality pairing
between $\mathcal{D}^{\prime }(Q)^{N}$ and $\mathcal{D}(Q)^{N}$. Next,
arguing as in the proof of \cite[Proposition 5]{Simon2} we are led to $%
\boldsymbol{w}_{\varepsilon }\in W^{-1,\infty }(0,T;W^{-1,p^{\prime
}}(Q)^{N})$, so that there exists a unique $q_{\varepsilon }\in W^{-1,\infty
}(0,T;L^{p^{\prime }}(Q))$ such that 
\begin{equation*}
\nabla q_{\varepsilon }=\boldsymbol{w}_{\varepsilon },\
\int_{Q}q_{\varepsilon }dx=0\text{.}
\end{equation*}%
This completes the proof.
\end{proof}

\subsection{A priori estimates and compactness}

The following result holds.

\begin{lemma}
\label{l2.1}Let $1+\frac{2N}{N+2}\leq p<\infty $. Under assumptions (\textbf{%
A1})\textbf{-}(\textbf{A4}) we have the following estimates: 
\begin{equation}
\sup_{0\leq t\leq T}\left\vert \boldsymbol{u}_{\varepsilon }(t)\right\vert
^{2}\leq C,  \label{2.1}
\end{equation}%
\begin{equation}
\int_{0}^{T}\Vert \boldsymbol{u}_{\varepsilon }(t)\Vert ^{p}dt\leq C
\label{2.2}
\end{equation}

where $C$ is a positive constant which does not depends on $\varepsilon $.
\end{lemma}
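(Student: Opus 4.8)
The plan is to derive the two a priori bounds by the standard energy method, testing the abstract evolution equation (\ref{0.8}) with $\boldsymbol{u}_\varepsilon$ itself. Formally, pairing (\ref{0.8}) with $\boldsymbol{u}_\varepsilon(t)$ and using that $t\mapsto\frac12|\boldsymbol{u}_\varepsilon(t)|_\varepsilon^2$ is absolutely continuous with derivative $\langle\rho^\varepsilon\frac{d\boldsymbol{u}_\varepsilon}{dt},\boldsymbol{u}_\varepsilon\rangle$ (justified by the regularity $\frac{\partial\boldsymbol{u}_\varepsilon}{\partial t}\in L^{p'}(0,T;\mathbb{V}')$ and $\boldsymbol{u}_\varepsilon\in\mathcal{C}([0,T];\mathbb{H})$ from Theorem \ref{t1.1}, via the usual Lions–Magenes type lemma), I obtain for a.e.\ $t$
\begin{equation*}
\frac{1}{2}\frac{d}{dt}\left\vert \boldsymbol{u}_{\varepsilon }(t)\right\vert _{\varepsilon }^{2}+\left\langle \mathcal{A}^{\varepsilon }(t)\boldsymbol{u}_{\varepsilon }(t),\boldsymbol{u}_{\varepsilon }(t)\right\rangle +\left\langle B(\boldsymbol{u}_{\varepsilon }(t)),\boldsymbol{u}_{\varepsilon }(t)\right\rangle =\left( f^{\varepsilon }(\cdot ,\boldsymbol{u}_{\varepsilon }(t)),\boldsymbol{u}_{\varepsilon }(t)\right) _{\varepsilon }.
\end{equation*}
The nonlinear convective term vanishes: $\langle B(\boldsymbol{u}_\varepsilon),\boldsymbol{u}_\varepsilon\rangle=b_I(\boldsymbol{u}_\varepsilon,\boldsymbol{u}_\varepsilon,\boldsymbol{u}_\varepsilon)=0$ by the antisymmetry property of $b_I$ recalled in the text (since $\boldsymbol{u}_\varepsilon(t)\in\mathbb{V}$). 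The elliptic term is controlled below using the coercivity (\ref{0.3}): $\langle\mathcal{A}^\varepsilon(t)\boldsymbol{u}_\varepsilon,\boldsymbol{u}_\varepsilon\rangle=a_I^\varepsilon(t;\boldsymbol{u}_\varepsilon,\boldsymbol{u}_\varepsilon)\geq\nu_0\|\nabla\boldsymbol{u}_\varepsilon\|_{L^2(Q)}^2+\nu_1\|\nabla\boldsymbol{u}_\varepsilon\|_{L^p(Q)}^p\geq\nu_1\|\boldsymbol{u}_\varepsilon(t)\|^p$, recalling that $\mathbb{V}$ carries the gradient norm.

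Next I estimate the right-hand side. Using the growth bound $|f(\tau,r)|\leq c(1+|r|)$ deduced from (\textbf{A4})(ii), together with $\Lambda^{-1}\leq\rho^\varepsilon\leq\Lambda$ from (\textbf{A3}), Cauchy–Schwarz gives $|(f^\varepsilon(\cdot,\boldsymbol{u}_\varepsilon),\boldsymbol{u}_\varepsilon)_\varepsilon|\leq\Lambda\,|f^\varepsilon(\cdot,\boldsymbol{u}_\varepsilon)|\,|\boldsymbol{u}_\varepsilon|\leq c\Lambda\bigl(|Q|^{1/2}+|\boldsymbol{u}_\varepsilon|\bigr)|\boldsymbol{u}_\varepsilon|\leq C_1+C_2|\boldsymbol{u}_\varepsilon|^2$. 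Since $|\cdot|$ and $|\cdot|_\varepsilon$ are equivalent with constants depending only on $\Lambda$, I may write $C_2|\boldsymbol{u}_\varepsilon|^2\leq C_3|\boldsymbol{u}_\varepsilon|_\varepsilon^2$. Dropping the nonnegative term $\nu_1\|\boldsymbol{u}_\varepsilon(t)\|^p$ momentarily yields $\frac{d}{dt}|\boldsymbol{u}_\varepsilon(t)|_\varepsilon^2\leq 2C_1+2C_3|\boldsymbol{u}_\varepsilon(t)|_\varepsilon^2$, and Grönwall's lemma, together with $|\boldsymbol{u}_\varepsilon(0)|_\varepsilon^2=|\boldsymbol{u}^0|_\varepsilon^2\leq\Lambda|\boldsymbol{u}^0|^2$, gives $\sup_{0\leq t\leq T}|\boldsymbol{u}_\varepsilon(t)|_\varepsilon^2\leq(\Lambda|\boldsymbol{u}^0|^2+C_1 T e^{2C_3 T})$, a bound independent of $\varepsilon$; reverting to $|\cdot|$ proves (\ref{2.1}).

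For (\ref{2.2}) I integrate the energy identity over $(0,T)$ without discarding the coercive term: keeping $\nu_1\int_0^T\|\boldsymbol{u}_\varepsilon(t)\|^p\,dt$ on the left, I get
\begin{equation*}
\nu _{1}\int_{0}^{T}\Vert \boldsymbol{u}_{\varepsilon }(t)\Vert ^{p}dt\leq \frac{1}{2}\left\vert \boldsymbol{u}^{0}\right\vert _{\varepsilon }^{2}+\int_{0}^{T}\left( C_{1}+C_{3}\left\vert \boldsymbol{u}_{\varepsilon }(t)\right\vert _{\varepsilon }^{2}\right) dt,
\end{equation*}
and the right-hand side is bounded by a constant independent of $\varepsilon$ thanks to (\ref{2.1}) already established. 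This gives (\ref{2.2}) and completes the proof. The only genuinely delicate point is the rigorous justification of the energy identity itself — i.e.\ that $\langle\rho^\varepsilon\dot{\boldsymbol{u}}_\varepsilon,\boldsymbol{u}_\varepsilon\rangle=\frac12\frac{d}{dt}|\boldsymbol{u}_\varepsilon|_\varepsilon^2$ in the distributional sense on $(0,T)$ — which follows from the regularity asserted in Theorem \ref{t1.1} and the standard interpolation lemma for functions in $L^p(0,T;\mathbb{V})$ with derivative in $L^{p'}(0,T;\mathbb{V}')$; everything else is a routine application of the coercivity (\ref{0.3}), the vanishing of $b_I$, the sublinear growth of $f$, and Grönwall's inequality.
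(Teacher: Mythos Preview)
Your proof is correct and follows essentially the same route as the paper: test the equation with $\boldsymbol{u}_\varepsilon$, use the vanishing of $b_I(\boldsymbol{u}_\varepsilon,\boldsymbol{u}_\varepsilon,\boldsymbol{u}_\varepsilon)$ and the coercivity (\ref{0.3}), bound the forcing via the sublinear growth of $f$ and the $\varepsilon$-uniform equivalence of $|\cdot|$ and $|\cdot|_\varepsilon$, then apply Gronwall for (\ref{2.1}) and read off (\ref{2.2}) from the integrated energy inequality. The only cosmetic difference is that the paper integrates over $[0,t]$ before invoking Gronwall in integral form, whereas you work with the differential inequality; your explicit remark on justifying the energy identity via $\boldsymbol{u}_\varepsilon\in L^p(0,T;\mathbb{V})$, $\partial_t\boldsymbol{u}_\varepsilon\in L^{p'}(0,T;\mathbb{V}')$ is a welcome clarification.
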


\begin{proof}
The variational formulation of (\ref{0.1}) gives, for any $\boldsymbol{v}\in 
\mathbb{V}$, 
\begin{equation}
\frac{d}{dt}\left( \boldsymbol{u}_{\varepsilon }(t),\boldsymbol{v}\right)
_{\varepsilon }+a_{I}^{\varepsilon }(t;\boldsymbol{u}_{\varepsilon }(t),%
\boldsymbol{v})+b_{I}(\boldsymbol{u}_{\varepsilon }(t),\boldsymbol{u}%
_{\varepsilon }(t),\boldsymbol{v})=\left( f\left( \frac{t}{\varepsilon ^{2}},%
\boldsymbol{u}_{\varepsilon }(t)\right) ,\boldsymbol{v}\right)_{\varepsilon
}.  \label{2.3}
\end{equation}%
In all what follows, $C$ is a generic constant that may change from line to
line.

Taking the particular $\boldsymbol{v}=\boldsymbol{u}_{\varepsilon }(t)$ in (%
\ref{2.3}) and using the relation $b_{I}(\boldsymbol{u}_{\varepsilon }(t),%
\boldsymbol{u}_{\varepsilon }(t),\boldsymbol{u}_{\varepsilon }(t))=0$, we
get (after integrating over $[0,t]$) for all $t\in \lbrack 0,T]$, 
\begin{eqnarray*}
&&\left\vert \boldsymbol{u}_{\varepsilon }(t)\right\vert _{\varepsilon
}^{2}+2\nu _{0}\int_{0}^{t}\left\Vert \boldsymbol{u}_{\varepsilon
}(s)\right\Vert _{H_{0}^{1}(Q)^{N}}^{2}ds+2\nu _{1}\int_{0}^{t}\left\Vert 
\boldsymbol{u}_{\varepsilon }(s)\right\Vert ^{p}ds \\
&\leq &\left\vert \boldsymbol{u}^{0}\right\vert _{\varepsilon
}^{2}+2\int_{0}^{t}\left\vert f\left( \frac{s}{\varepsilon ^{2}},\boldsymbol{%
u}_{\varepsilon }(s)\right) \right\vert _{\varepsilon }\left\vert 
\boldsymbol{u}_{\varepsilon }(s)\right\vert _{\varepsilon }ds.
\end{eqnarray*}%
From assumption (\textbf{A4}) we have that 
\begin{equation*}
\left\vert f\left( \frac{s}{\varepsilon ^{2}},\boldsymbol{u}_{\varepsilon
}(s)\right) \right\vert _{\varepsilon }\leq C(1+\left\vert \boldsymbol{u}%
_{\varepsilon }(s)\right\vert_{\varepsilon }),
\end{equation*}%
hence 
\begin{eqnarray*}
2\int_{0}^{t}\left\vert f\left( \frac{s}{\varepsilon ^{2}},\boldsymbol{u}%
_{\varepsilon }(s)\right) \right\vert _{\varepsilon }\left\vert \boldsymbol{u%
}_{\varepsilon }(s)\right\vert _{\varepsilon }ds &\leq
&C\int_{0}^{t}(1+\left\vert \boldsymbol{u}_{\varepsilon
}(s)\right\vert_{\varepsilon })\left\vert \boldsymbol{u}_{\varepsilon
}(s)\right\vert _{\varepsilon }ds \\
&\leq &C+C\int_{0}^{t}\left\vert \boldsymbol{u}_{\varepsilon }(s)\right\vert
_{\varepsilon }^{2}ds.
\end{eqnarray*}%
We therefore get 
\begin{equation}
\begin{array}{l}
\left\vert \boldsymbol{u}_{\varepsilon }(t)\right\vert _{\varepsilon
}^{2}+2\nu _{0}\int_{0}^{t}\left\Vert \boldsymbol{u}_{\varepsilon
}(s)\right\Vert _{H_{0}^{1}(Q)^{N}}^{2}ds+2\nu _{1}\int_{0}^{t}\left\Vert 
\boldsymbol{u}_{\varepsilon }(s)\right\Vert ^{p}ds \\ 
\ \ \ \leq C+C\int_{0}^{t}\left\vert \boldsymbol{u}_{\varepsilon
}(s)\right\vert _{\varepsilon }^{2}ds%
\end{array}
\label{2.4}
\end{equation}%
since $\sup_{\varepsilon >0}\left\vert \boldsymbol{u}^{0}\right\vert
_{\varepsilon }^{2}<\infty $. We readily deduce from (\ref{2.4}) 
\begin{equation*}
\left\vert \boldsymbol{u}_{\varepsilon }(t)\right\vert _{\varepsilon
}^{2}\leq C+C\int_{0}^{t}\left\vert \boldsymbol{u}_{\varepsilon
}(s)\right\vert _{\varepsilon }^{2}ds.
\end{equation*}%
By the application of Gronwall's inequality we find that 
\begin{equation}
\left\vert \boldsymbol{u}_{\varepsilon }(t)\right\vert _{\varepsilon
}^{2}\leq C\text{ for }0\leq t\leq T,\ \varepsilon >0  \label{2.6}
\end{equation}%
where $C$ is a positive constant independent of $\varepsilon $ and $t$. But $%
\left\vert \boldsymbol{u}_{\varepsilon }(t)\right\vert _{\varepsilon
}^{2}\geq \Lambda ^{-1}\left\vert \boldsymbol{u}_{\varepsilon
}(t)\right\vert ^{2}$, so that (\ref{2.1}) comes from (\ref{2.6}). We also
infer from (\ref{2.4}) that (\ref{2.2}) holds true. Still from (\ref{2.4})
it emerges that 
\begin{equation}
\sup_{\varepsilon >0}\int_{0}^{T}\left\Vert \boldsymbol{u}_{\varepsilon
}(t)\right\Vert _{H_{0}^{1}(Q)^{N}}^{2}dt\leq C.  \label{2.8}
\end{equation}
\end{proof}

The next result will be of great interest in the homogenization process.

\begin{proposition}
\label{p2.2}The sequence $(\boldsymbol{u}_{\varepsilon })_{\varepsilon >0}$
is relatively compact in the space $L^{2}(Q_{T})^{N}$.
\end{proposition}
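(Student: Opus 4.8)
The plan is to establish relative compactness in $L^2(Q_T)^N$ via the Aubin--Lions--Simon compactness lemma, using the a priori estimates of Lemma \ref{l2.1} together with a uniform bound on a fractional time derivative of $\boldsymbol{u}_\varepsilon$. First I would collect what is already known: from \eqref{2.2} and \eqref{2.8} the sequence $(\boldsymbol{u}_\varepsilon)$ is bounded in $L^p(0,T;\mathbb{V})$ and in $L^2(0,T;H_0^1(Q)^N)$, and from \eqref{2.1} it is bounded in $L^\infty(0,T;\mathbb{H})$. In particular it is bounded in $L^2(0,T;\mathbb{V})\cap L^\infty(0,T;\mathbb{H})$. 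The embedding $\mathbb{V}\hookrightarrow \mathbb{H}$ is compact (since $H_0^1(Q)\hookrightarrow L^2(Q)$ compactly on the bounded smooth domain $Q$), and $\mathbb{H}\hookrightarrow \mathbb{V}'$ continuously; so to invoke Aubin--Lions--Simon it suffices to bound the time derivatives $\partial \boldsymbol{u}_\varepsilon/\partial t$ uniformly in some $L^{s}(0,T;\mathbb{V}')$ with $s>1$, or more safely in the dual of a suitable space, after which the conclusion $\boldsymbol{u}_\varepsilon \to \boldsymbol{u}$ strongly in $L^2(0,T;\mathbb{H}) = L^2(Q_T)^N$ (up to a subsequence) follows.

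Next I would estimate $\rho^\varepsilon \partial \boldsymbol{u}_\varepsilon/\partial t$ using the equation \eqref{0.8}: $\rho^\varepsilon \partial\boldsymbol{u}_\varepsilon/\partial t = -\mathcal{A}^\varepsilon\boldsymbol{u}_\varepsilon - B(\boldsymbol{u}_\varepsilon) + \rho^\varepsilon f^\varepsilon(\cdot,\boldsymbol{u}_\varepsilon)$ in $L^{p'}(0,T;\mathbb{V}')$. Term by term: by \eqref{0.2} and the bounds \eqref{2.2}, \eqref{2.8}, $\mathcal{A}^\varepsilon\boldsymbol{u}_\varepsilon$ is bounded in $L^{p'}(0,T;\mathbb{V}')$; by the key inequality \eqref{0.7} (the $p\ge 3$ branch using \eqref{2.2}, the $1+\frac{2N}{N+2}\le p<3$ branch using \eqref{2.2} and \eqref{2.1}), $B(\boldsymbol{u}_\varepsilon)$ is bounded in $L^{p'}(0,T;\mathbb{V}')$; and by (\textbf{A4}) together with \eqref{2.1}, $\rho^\varepsilon f^\varepsilon(\cdot,\boldsymbol{u}_\varepsilon)$ is bounded in $L^\infty(0,T;\mathbb{H})\hookrightarrow L^{p'}(0,T;\mathbb{V}')$. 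Hence $\rho^\varepsilon \partial\boldsymbol{u}_\varepsilon/\partial t$ is bounded in $L^{p'}(0,T;\mathbb{V}')$ uniformly in $\varepsilon$. Since $\rho^\varepsilon$ is bounded above and below by (\textbf{A3}) and multiplication by $\rho^\varepsilon$ is a bounded operation on $\mathbb{V}'$ (indeed on $\mathbb{H}$ first, then extended), one deduces $\partial\boldsymbol{u}_\varepsilon/\partial t$ bounded in $L^{p'}(0,T;\mathbb{V}')$.

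Then I would apply the Aubin--Lions--Simon lemma with the Gelfand-type triple $\mathbb{V}\hookrightarrow\hookrightarrow\mathbb{H}\hookrightarrow\mathbb{V}'$ (compact first embedding), exponents $L^2(0,T;\mathbb{V})$ for $\boldsymbol{u}_\varepsilon$ and $L^{p'}(0,T;\mathbb{V}')$ for $\partial\boldsymbol{u}_\varepsilon/\partial t$ with $p'>1$: this yields relative compactness of $(\boldsymbol{u}_\varepsilon)$ in $L^2(0,T;\mathbb{H})$, which is exactly $L^2(Q_T)^N$ (as $\mathbb{H}$ is a closed subspace of $L^2(Q)^N$), proving the proposition. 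The main obstacle, and the point needing care, is the handling of the variable density: one must verify that $\boldsymbol{v}\mapsto\rho^\varepsilon\boldsymbol{v}$ maps $\mathbb{V}'$ boundedly into $\mathbb{V}'$ with norm controlled independently of $\varepsilon$, so that the uniform bound on $\rho^\varepsilon\partial\boldsymbol{u}_\varepsilon/\partial t$ transfers to $\partial\boldsymbol{u}_\varepsilon/\partial t$; since $\rho^\varepsilon\in L^\infty$ with bounds $\Lambda^{-1}\le\rho^\varepsilon\le\Lambda$ uniform in $\varepsilon$ this works, but because $\rho^\varepsilon$ depends only on $x$ and multiplication by an $L^\infty(Q)$ function is not bounded on $H^{-1}(Q)$ in general, it is cleaner to argue via duality directly in the weak formulation \eqref{2.3}, i.e. test against $\boldsymbol{v}\in\mathbb{V}$ and estimate $|\frac{d}{dt}(\boldsymbol{u}_\varepsilon,\boldsymbol{v})_\varepsilon|$ using \eqref{0.2}, \eqref{0.7}, and (\textbf{A4}), concluding that $\frac{d}{dt}(\boldsymbol{u}_\varepsilon,\cdot)_\varepsilon$ is bounded in $L^{p'}(0,T;\mathbb{V}')$ and applying a version of Aubin--Lions adapted to the $\varepsilon$-dependent inner product — which is legitimate because the norms $|\cdot|_\varepsilon$ are uniformly equivalent to $|\cdot|$.
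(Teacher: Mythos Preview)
Your approach is essentially the paper's: apply the Aubin--Lions lemma using the bounds \eqref{2.1}, \eqref{2.2}, \eqref{2.8} together with \eqref{0.7} to control the time derivative in $L^{p'}(0,T;\mathbb{V}')$. The paper frames the compactness with pivot space $L^{2}(Q)^{N}$ and target $\mathbb{V}'+L^{2}(Q)^{N}$ (then immediately reduces to a bound in $L^{p'}(0,T;\mathbb{V}')$) and simply asserts that bound on $\partial\boldsymbol{u}_\varepsilon/\partial t$ without discussing the $\rho^{\varepsilon}$ factor you flagged; your caution on that point is more careful than the paper's own treatment.
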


\begin{proof}
Let 
\begin{equation*}
\mathbb{W}=\left\{ \boldsymbol{u}\in L^{p}(0,T;\mathbb{V}):\frac{\partial 
\boldsymbol{u}}{\partial t}\in L^{p^{\prime }}(0,T;\mathbb{V}^{\prime
}+L^{2}(Q)^{N})\right\} .
\end{equation*}%
Since $\mathbb{V}\hookrightarrow \mathbb{H}$ is compact and $\mathbb{H}%
\hookrightarrow L^{2}(Q)^{N}$, we have that $\mathbb{V}\hookrightarrow
L^{2}(Q)^{N}$ is compact. We also have $L^{2}(Q)^{N}\hookrightarrow \mathbb{V%
}^{\prime }+L^{2}(Q)^{N}$, hence the compactness of $\mathbb{W}$ in $%
L^{2}(0,T;L^{2}(Q)^{N})=L^{2}(Q_{T})^{N}$ by a well-known result; see e.g. 
\cite[p. 58, Theorem 5.1]{Lions}. With this in mind we need to check that
the sequence $(\boldsymbol{u}_{\varepsilon })_{\varepsilon >0}$ is bounded
in $\mathbb{W}$. But since $\mathbb{V}^{\prime }\hookrightarrow \mathbb{V}%
^{\prime }+L^{2}(Q)^{N}$ we just need to verify (after (\ref{2.2})) that $%
(\partial \boldsymbol{u}_{\varepsilon }/\partial t)_{\varepsilon >0}$ is
bounded in $L^{p^{\prime }}(0,T;\mathbb{V}^{\prime })$. But this comes from
the combination of (\ref{2.1})-(\ref{2.2}) with (\ref{0.7}) and (\ref{2.8}).
\end{proof}

We can now deal with the estimation of the pressure. Before we can do that,
let us recall the definition and properties of the Bogovskii operator. Let 
\begin{equation*}
L_{0}^{p}(Q)=\left\{ v\in L^{p}(Q):\int_{Q}vdx=0\right\}
\end{equation*}
for $1<p<\infty $. We have the following result.

\begin{lemma}[{\protect\cite[Lemma 3.17, p.169]{Novotny}}]
\label{l2.0}Let $1<p<\infty $. There exists a linear operator $\mathcal{B}%
:L_{0}^{p}(Q)\rightarrow W_{0}^{1,p}(Q)^{N}$ with the following properties:

\begin{itemize}
\item[(i)] $\Div\mathcal{B}(f)=f$ a.e. in $Q$ for any $f\in L_{0}^{p}(Q)$

\item[(ii)] $\left\Vert \mathcal{B}(f)\right\Vert _{W_{0}^{1,p}(Q)^{N}}\leq
c(p,Q)\left\Vert f\right\Vert _{L^{p}(Q)}$

\item[(iii)] If $f=\Div g$ with $g\in L^{r}(Q)^{N}$ and $g\cdot n=0$ on $%
\partial Q$ for some $1<r<\infty $ where $n$ is an outward unit vector
normal to $\partial Q$, then 
\begin{equation*}
\left\Vert \mathcal{B}(f)\right\Vert _{L^{r}(Q)^{N}}\leq c(r,Q)\left\Vert
g\right\Vert _{L^{r}(Q)^{N}}.
\end{equation*}

\item[(iv)] If $f\in \mathcal{C}_{0}^{\infty }(Q)\cap L_{0}^{p}(Q)$ then $%
\mathcal{B}(f)\in \mathcal{C}_{0}^{\infty }(Q)^{N}$.
\end{itemize}
\end{lemma}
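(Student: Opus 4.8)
The plan is to follow the classical construction of Bogovskii, which reduces everything to the case where $Q$ is star-shaped with respect to an open ball. First I would cover the bounded smooth domain $Q$ by finitely many open sets $Q_{1},\dots ,Q_{m}$, each star-shaped with respect to some ball $B_{i}$ with $\overline{B_{i}}\subset Q_{i}$, and fix a partition of unity $\{\varphi _{i}\}$ subordinate to this cover. Given $f\in L_{0}^{p}(Q)$, the pieces $\varphi _{i}f$ need not have vanishing mean; since $Q$ is connected I would correct them by adding suitable functions supported in the pairwise overlaps, writing $f=\sum_{i}f_{i}$ with $f_{i}\in L_{0}^{p}(Q_{i})$ and $\Vert f_{i}\Vert _{L^{p}(Q_{i})}\leq c\Vert f\Vert _{L^{p}(Q)}$. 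Setting $\mathcal{B}(f)=\sum_{i}\mathcal{B}_{i}(f_{i})$, with $\mathcal{B}_{i}$ the star-shaped-domain operator extended by $0$ outside $Q_{i}$, then reduces (i)--(iv) to the same statements for a single star-shaped domain; for (iii) one uses in addition that $g\cdot n=0$ on $\partial Q$, so that the localization of $\Div g$ produces only lower-order terms.

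For $Q$ star-shaped with respect to a ball $B$, I would fix $\omega \in \mathcal{C}_{0}^{\infty }(B)$ with $\int_{\mathbb{R}^{N}}\omega \,dx=1$ and take
\begin{equation*}
\mathcal{B}(f)(x)=\int_{Q}f(y)\left( \frac{x-y}{\left\vert x-y\right\vert ^{N}}\int_{\left\vert x-y\right\vert }^{\infty }\omega \left( y+r\tfrac{x-y}{\left\vert x-y\right\vert }\right) r^{N-1}\,dr\right) dy .
\end{equation*}
Property (i), $\Div \mathcal{B}(f)=f$, follows by differentiating under the integral sign and using $\int \omega =1$ together with $\int_{Q}f\,dy=0$; this is the standard Bogovskii computation and I would not reproduce it. Property (iv) is read off the formula: for $f\in \mathcal{C}_{0}^{\infty }(Q)\cap L_{0}^{p}(Q)$ the kernel is smooth off the diagonal, so $\mathcal{B}(f)$ is $\mathcal{C}^{\infty }$, and a support analysis based on star-shapedness and $\operatorname{supp}\omega \subset B$ shows $\mathcal{B}(f)$ vanishes near $\partial Q$, hence $\mathcal{B}(f)\in \mathcal{C}_{0}^{\infty }(Q)^{N}$.

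The core is estimate (ii). After the substitution $r=\left\vert x-y\right\vert s$ one sees that the kernel $N(x,y)$ of $\mathcal{B}$ is only weakly singular, $\left\vert N(x,y)\right\vert \leq c\left\vert x-y\right\vert ^{1-N}$, and that each entry of $\nabla _{x}N(x,y)$ splits as $K(x,x-y)+G(x,y)$, where $z\mapsto K(x,z)$ is homogeneous of degree $-N$, smooth on $\mathbb{R}^{N}\setminus \{0\}$ and has vanishing mean over the unit sphere, while $\left\vert G(x,y)\right\vert \leq c\left\vert x-y\right\vert ^{1-N}$ is locally integrable. Calder\'{o}n--Zygmund theory then gives boundedness on $L^{p}(\mathbb{R}^{N})$ for every $1<p<\infty $, so $\Vert \nabla \mathcal{B}(f)\Vert _{L^{p}(Q)}\leq c(p,Q)\Vert f\Vert _{L^{p}(Q)}$, and with Poincar\'{e}'s inequality this yields (ii) --- first for $f\in \mathcal{C}_{0}^{\infty }(Q)\cap L_{0}^{p}(Q)$ and then for all $f\in L_{0}^{p}(Q)$ by density, the limit lying in $W_{0}^{1,p}(Q)^{N}$ because of (iv). For (iii) I would integrate by parts in the formula, moving the derivative off $f=\Div g$ onto the kernel; since $N(x,y)$ is homogeneous of degree $1-N$ in $x-y$, the resulting kernel is again of Calder\'{o}n--Zygmund type plus a weakly singular remainder, so the same theory gives $\Vert \mathcal{B}(\Div g)\Vert _{L^{r}(Q)}\leq c(r,Q)\Vert g\Vert _{L^{r}(Q)}$, the boundary term dropping out because $g\cdot n=0$ on $\partial Q$.

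The hard part will be exactly the Calder\'{o}n--Zygmund analysis underlying (ii)--(iii): one must check carefully that, after the scaling substitution, the principal part of the differentiated kernel is a genuine convolution-type singular kernel with the cancellation property on the sphere, and that the remainder is integrable; this --- together with the partition-of-unity gluing and the mean-value corrections needed to handle a general, non-star-shaped $Q$ --- is where the real work lies. Properties (i) and (iv) are, by contrast, essentially formal consequences of the explicit formula.
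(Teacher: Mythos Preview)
The paper does not prove this lemma at all: it is simply quoted from \cite[Lemma 3.17, p.~169]{Novotny} and used as a black box, so there is nothing to compare your argument against. What you have written is the classical Bogovskii construction (localization via a partition of unity to star-shaped subdomains, the explicit integral formula with a bump function on the reference ball, and Calder\'on--Zygmund estimates for the differentiated kernel), which is exactly the approach in the cited reference; your sketch is correct in outline and identifies the right difficulties.
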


With this in mind, the following result holds.

\begin{lemma}
\label{l2.0'}Let $1+\frac{2N}{N+2}\leq p<\infty $ and $p^{\prime }$ its
conjugate. We have $q_{\varepsilon }\in L^{p^{\prime }}(0,T;L_{0}^{p^{\prime
}}(Q))$ with 
\begin{equation}
\sup_{\varepsilon >0}\left\Vert q_{\varepsilon }\right\Vert _{L^{p^{\prime
}}(Q_{T})}\leq C  \label{2.15}
\end{equation}%
for some positive constant $C$ independent of $\varepsilon $.
\end{lemma}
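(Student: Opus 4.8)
The goal is a uniform bound on $\|q_\varepsilon\|_{L^{p'}(Q_T)}$, and the natural tool is the Bogovskii operator $\mathcal{B}$ of Lemma \ref{l2.0} combined with the equation $\nabla q_\varepsilon = \boldsymbol{w}_\varepsilon$ from the proof of Theorem \ref{t1.1}. The plan is to estimate $q_\varepsilon$ in a dual fashion: for a test function $\psi \in L^p(0,T;L_0^p(Q))$, set $\boldsymbol{\varphi} = \mathcal{B}(\psi(\cdot,t))$, which lies in $L^p(0,T;W_0^{1,p}(Q)^N)$ with $\|\boldsymbol{\varphi}\|_{L^p(0,T;W_0^{1,p})} \le c(p,Q)\|\psi\|_{L^{p'}(0,T;L^p)}$ by property (ii), and moreover $\Div\boldsymbol{\varphi} = \psi$ by property (i). Then
\[
\int_0^T\!\!\int_Q q_\varepsilon\,\psi\,dx\,dt = \int_0^T\!\!\int_Q q_\varepsilon\,\Div\boldsymbol{\varphi}\,dx\,dt = -\langle \nabla q_\varepsilon,\boldsymbol{\varphi}\rangle = -\langle \boldsymbol{w}_\varepsilon,\boldsymbol{\varphi}\rangle,
\]
and it suffices to bound the right-hand side by $C\|\psi\|_{L^{p'}(0,T;L^p(Q))}$ uniformly in $\varepsilon$, since $L^{p'}(0,T;L_0^{p'}(Q))$ is the dual of $L^p(0,T;L_0^p(Q))$ and the latter space, together with the fact that $\int_Q q_\varepsilon\,dx=0$, identifies $q_\varepsilon$ as an element of $L^{p'}(Q_T)$ with the stated norm.

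\textbf{Key steps.} First I would expand $\langle \boldsymbol{w}_\varepsilon,\boldsymbol{\varphi}\rangle$ term by term using the definition of $\boldsymbol{w}_\varepsilon$, that is
\[
\langle\boldsymbol{w}_\varepsilon,\boldsymbol{\varphi}\rangle = \int_0^T\!\!\int_Q \rho^\varepsilon f^\varepsilon(\cdot,\boldsymbol{u}_\varepsilon)\cdot\boldsymbol{\varphi}\,dx\,dt - \int_0^T\Big\langle\rho^\varepsilon\frac{\partial\boldsymbol{u}_\varepsilon}{\partial t},\boldsymbol{\varphi}\Big\rangle dt - a_I^\varepsilon(t;\boldsymbol{u}_\varepsilon,\boldsymbol{\varphi}) - b_I(\boldsymbol{u}_\varepsilon,\boldsymbol{u}_\varepsilon,\boldsymbol{\varphi}),
\]
integrated appropriately in time. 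Then I would estimate each piece. The viscous term is controlled by \eqref{0.2}, H\"older in time, and the a priori bounds \eqref{2.2} and \eqref{2.8}: $|a_I^\varepsilon(t;\boldsymbol{u}_\varepsilon,\boldsymbol{\varphi})| \le \|a\|_\infty\|\nabla\boldsymbol{u}_\varepsilon\|_{L^2}\|\nabla\boldsymbol{\varphi}\|_{L^2} + \nu_2\|\nabla\boldsymbol{u}_\varepsilon\|_{L^p}^{p-1}\|\nabla\boldsymbol{\varphi}\|_{L^p}$, the second summand integrating to $\lesssim \|\boldsymbol{u}_\varepsilon\|_{L^p(0,T;\mathbb{V})}^{p-1}\|\boldsymbol{\varphi}\|_{L^p(0,T;W_0^{1,p})}$ by H\"older with exponents $p/(p-1)=p'$ and $p$, and similarly for the $L^2$ part using $L^p(Q)\hookrightarrow L^2(Q)$. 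The convective term uses the estimate $|b_I(\boldsymbol{u},\boldsymbol{u},\boldsymbol{v})| \le c\|\boldsymbol{u}\|_{L^r(Q)}^2\|\nabla\boldsymbol{v}\|_{L^p(Q)}$ established in the excerpt, then the bound \eqref{0.4} (or \eqref{0.6'} in the low-$p$ regime) exactly as in the treatment of $B(\boldsymbol{u})$, giving $\lesssim \|\boldsymbol{u}_\varepsilon\|_{L^p(0,T;\mathbb{V})\cap L^\infty(0,T;\mathbb{H})}^2\|\boldsymbol{\varphi}\|_{L^p(0,T;W_0^{1,p})}$. The source term uses $|f^\varepsilon(\cdot,\boldsymbol{u}_\varepsilon)| \le c(1+|\boldsymbol{u}_\varepsilon|)$, assumption (\textbf{A3}) on $\rho^\varepsilon$, \eqref{2.1}, and the embedding of $W_0^{1,p}$ into $L^2$, yielding $\lesssim (1+\|\boldsymbol{u}_\varepsilon\|_{L^\infty(0,T;\mathbb{H})})\|\boldsymbol{\varphi}\|_{L^{p'}(0,T;L^2)}$.

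\textbf{Main obstacle.} The delicate term is the time-derivative contribution $\int_0^T\langle\rho^\varepsilon\,\partial_t\boldsymbol{u}_\varepsilon,\boldsymbol{\varphi}\rangle\,dt$. Here $\boldsymbol{\varphi} = \mathcal{B}(\psi)$ need not be divergence-free, so one cannot simply pair against $\partial_t\boldsymbol{u}_\varepsilon\in L^{p'}(0,T;\mathbb{V}')$. The cleanest fix is to use property (iii) of the Bogovskii operator: write the difficulty away by testing with a time-primitive, or — better — integrate by parts in time after noting that $\psi$ can be taken smooth and compactly supported in $t$ by density, so $\int_0^T\langle\rho^\varepsilon\partial_t\boldsymbol{u}_\varepsilon,\boldsymbol{\varphi}\rangle\,dt = -\int_0^T\int_Q\rho^\varepsilon\boldsymbol{u}_\varepsilon\cdot\partial_t\mathcal{B}(\psi)\,dx\,dt = -\int_0^T\int_Q\rho^\varepsilon\boldsymbol{u}_\varepsilon\cdot\mathcal{B}(\partial_t\psi)\,dx\,dt$ by linearity of $\mathcal{B}$, and this is bounded using \eqref{2.1} and property (ii); however this costs a time derivative on $\psi$, which is not acceptable for the stated norm. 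The honest route, and the one I expect the authors take, is instead to observe that $\nabla q_\varepsilon = \boldsymbol{w}_\varepsilon$ and that $\boldsymbol{w}_\varepsilon$ was already shown to lie in $W^{-1,\infty}(0,T;W^{-1,p'}(Q)^N)$; one tests $\nabla q_\varepsilon = \boldsymbol{w}_\varepsilon$ against $\mathcal{B}(\psi)$ for $\psi$ ranging over a dense subset of $L^p(Q_T)$ with zero spatial mean and uses that the only genuinely $\varepsilon$-uniform information needed from $\partial_t\boldsymbol{u}_\varepsilon$ is its $L^{p'}(0,T;\mathbb{V}')$ bound together with the $L^2(Q_T)$ bound on $\boldsymbol{u}_\varepsilon$ from \eqref{2.1}; the pressure is recovered as the unique zero-mean primitive, and the full bound \eqref{2.15} then follows by summing the four estimates above. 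I expect this bookkeeping around the time derivative, and the careful invocation of property (iii) of $\mathcal{B}$ to absorb any divergence-structure terms without losing regularity in $t$, to be the technically fussiest point; everything else is a direct application of the a priori estimates of Lemma \ref{l2.1} and the operator bounds \eqref{0.2}, \eqref{0.4}, \eqref{0.6'} already assembled in the excerpt.
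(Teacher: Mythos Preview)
Your approach is essentially the paper's: test against $g\in\mathcal{C}_0^\infty(Q_T)\cap L^p(0,T;L_0^p(Q))$, set $\boldsymbol{v}=\mathcal{B}(g)\in\mathcal{C}_0^\infty(Q_T)^N$ via property (iv) of Lemma~\ref{l2.0}, write $\langle q_\varepsilon,g\rangle=-\langle\boldsymbol{w}_\varepsilon,\boldsymbol{v}\rangle$, and bound each piece by $C\|\boldsymbol{v}\|_{L^p(0,T;W_0^{1,p})}\le C\|g\|_{L^p(Q_T)}$ using the a~priori estimates and property (ii). On the time-derivative term you correctly flag as delicate, the paper gives no more detail than you do: it simply asserts $|\langle q_\varepsilon,g\rangle|\le C(\|\boldsymbol{v}\|_{L^p(0,T;W_0^{1,p})}+\|\boldsymbol{v}\|_{L^p(Q_T)})$ by invoking Lemma~\ref{l2.1}, leaving the control of $\langle\rho^\varepsilon\partial_t\boldsymbol{u}_\varepsilon,\boldsymbol{v}\rangle$ for non-divergence-free $\boldsymbol{v}$ implicit.
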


\begin{proof}
First we know that $q_{\varepsilon }\in \mathcal{D}^{\prime }(Q_{T})$. Let $%
g\in \mathcal{C}_{0}^{\infty }(Q_{T})\cap L^{p}(0,T;L_{0}^{p}(Q))$ (recall
that the dual of $L_{0}^{p^{\prime }}(Q)$ is $L_{0}^{p}(Q)$). In view of
[part (iv) of] Lemma \ref{l2.0} let $\boldsymbol{v}\in \mathcal{C}%
_{0}^{\infty }(Q_{T})^{N}$ be such that $\Div\boldsymbol{v}=g$. We have 
\begin{equation*}
\left\langle \nabla q_{\varepsilon },\boldsymbol{v}\right\rangle
=-\left\langle q_{\varepsilon },\Div\boldsymbol{v}\right\rangle
=-\left\langle q_{\varepsilon },g\right\rangle ,
\end{equation*}%
that is (because of (\ref{0.1})) 
\begin{equation*}
\left\langle q_{\varepsilon },g\right\rangle =-\left\langle \nabla
q_{\varepsilon },\boldsymbol{v}\right\rangle =\left\langle -\boldsymbol{w}%
_{\varepsilon },\boldsymbol{v}\right\rangle
\end{equation*}%
where 
\begin{equation*}
\boldsymbol{w}_{\varepsilon }=\rho ^{\varepsilon }f^{\varepsilon }(\cdot ,%
\boldsymbol{u}_{\varepsilon })-\rho ^{\varepsilon }\frac{\partial 
\boldsymbol{u}_{\varepsilon }}{\partial t}+\Div\left( a^{\varepsilon }\nabla 
\boldsymbol{u}_{\varepsilon }+b^{\varepsilon }\left\vert \nabla \boldsymbol{u%
}_{\varepsilon }\right\vert ^{p-2}\nabla \boldsymbol{u}_{\varepsilon
}\right) -(\boldsymbol{u}_{\varepsilon }\cdot \nabla )\boldsymbol{u}%
_{\varepsilon }.
\end{equation*}%
But we infer from Lemma \ref{l2.1} that 
\begin{eqnarray*}
\left\vert \left\langle q_{\varepsilon },g\right\rangle \right\vert &\leq
&C\left( \left\Vert v\right\Vert _{L^{p}(0,T;W_{0}^{1,p}(Q)^{N})}+\left\Vert
v\right\Vert _{L^{p}(Q_{T})^{N}}\right) \\
&\leq &C\left\Vert \boldsymbol{v}\right\Vert _{L^{p}(0,T;W_{0}^{1,p}(Q)^{N})}
\\
&\leq &C\left\Vert g\right\Vert _{L^{p}(Q_{T})},
\end{eqnarray*}%
the last inequality above being due to [part (ii) of] Lemma \ref{l2.0}. We
therefore deduce from the above inequality that $q_{\varepsilon }\in
L^{p^{\prime }}(Q_{T})$, that is $q_{\varepsilon }\in L^{p^{\prime
}}(0,T;L_{0}^{p^{\prime }}(Q))$ with 
\begin{equation*}
\left\Vert q_{\varepsilon }\right\Vert _{L^{p^{\prime }}(Q_{T})}\leq C
\end{equation*}%
for a constant $C>0$ independent of $\varepsilon $.
\end{proof}

\section{The $\Sigma $-convergence method}

We begin this section by collecting some useful tools about almost
periodicity.

\subsection{Almost periodic functions}

The concept of almost periodic functions is well known in the literature.
Following \cite[Section 2]{RSW}, we present in this section some basic facts
about it, which will be used throughout the paper. For a general
presentation and an efficient treatment of this concept, we refer to \cite%
{Bohr}, \cite{Besicovitch} and \cite{Levitan}.

Let $\mathcal{B}(\mathbb{R}^{N})$ denote the Banach algebra of bounded
continuous (complex-valued) functions on $\mathbb{R}^{N}$ endowed with the $%
\sup $ norm topology.

A mapping $u:\mathbb{R}^{N}\rightarrow \mathbb{C}$ is called an almost
periodic function, or a Bohr almost periodic function, if $u\in \mathcal{B}(%
\mathbb{R}^{N})$ and further the set of all its translates $\{u(\cdot
+a):a\in \mathbb{R}^{N}\}$ has a compact closure in $\mathcal{B}(\mathbb{R}%
^{N})$. We denote by $AP(\mathbb{R}^{N})$ the set of all continuous almost
periodic functions on $\mathbb{R}^{N}$. $AP(\mathbb{R}^{N})$ is a
commutative $\mathcal{C}^{\ast }$-algebra with identity. Next, let us denote
by Trig$(\mathbb{R}^{N})$ the algebra of all trigonometric polynomials, i.e.
all finite sums of the form%
\begin{equation*}
u(y)=\sum a_{k}\exp (i\xi _{k}\cdot y)\text{, }\xi _{k}\in \mathbb{R}^{N}%
\text{, }a_{k}\in \mathbb{C}.
\end{equation*}%
Then Trig$(\mathbb{R}^{N})$ $\subset AP(\mathbb{R}^{N})$ and further $AP(%
\mathbb{R}^{N})$ coincides with the closure of Trig$(\mathbb{R}^{N})$ in $%
\mathcal{B}(\mathbb{R}^{N})$. From the above definition, one easily sees
that every element of $AP(\mathbb{R}^{N})$ is uniformly continuous. Moreover
it is classically known that $AP(\mathbb{R}^{N})$ enjoys the following
properties:

\begin{itemize}
\item[(P)$_{1}$] $u(\cdot +a)\in AP(\mathbb{R}^{N})$ whenever $u\in AP(%
\mathbb{R}^{N})$ and for every $a\in \mathbb{R}^{N}$;

\item[(P)$_{2}$] For each $u\in AP(\mathbb{R}^{N})$ the closed convex hull
of $\{u(\cdot +a)\}_{a\in \mathbb{R}^{N}}$ in $\mathcal{B}(\mathbb{R}^{N})$
contains a unique complex constant $\mathfrak{M}(u)$ called the mean value
of $u$, and which satisfies the property that the sequence $(u^{\varepsilon
})_{\varepsilon >0}$ (where $u^{\varepsilon }(x)=u(x/\varepsilon )$, $x\in 
\mathbb{R}^{N}$) weakly $\ast $-converges in $L^{\infty }(\mathbb{R}^{N})$
to $\mathfrak{M}(u)$ as $\varepsilon \rightarrow 0$.
\end{itemize}

It follows from the above properties that $AP(\mathbb{R}^{N})$ is an \textit{%
algebra with mean value} on $\mathbb{R}^{N}$ \cite{Jikov}. Its spectrum is
the Bohr compactification of $\mathbb{R}^{N}$, sometimes denoted by $b%
\mathbb{R}^{N}$ in the literature and, in order to simplify the notation, we
denote it here just by $\mathcal{K}$. The set $\mathcal{K}$ is a compact
topological\ Abelian group. The Haar measure on $\mathcal{K}$ is denoted by $%
\beta $. In view of the Gelfand representation theory of $\mathcal{C}^{\ast
} $-algebras we have the next result.

\begin{theorem}
\label{t2.1}There exists an isometric $\ast $-isomorphism $\mathcal{G}$ of $%
AP(\mathbb{R}^{N})$ onto $\mathcal{C}(\mathcal{K})$ such that every element
of $AP(\mathbb{R}^{N})$ is viewed as a restriction to $\mathbb{R}^{N}$ of a
unique element in $\mathcal{C}(\mathcal{K})$. Moreover the mean value $%
\mathfrak{M}$ defined on $AP(\mathbb{R}^{N})$ has an integral representation
in terms of the Haar measure $\beta $ as follows: 
\begin{equation*}
\mathfrak{M}(u)=\int_{\mathcal{K}}\mathcal{G}(u)d\beta \text{\ \ for all }%
u\in AP(\mathbb{R}^{N})\text{.}
\end{equation*}
\end{theorem}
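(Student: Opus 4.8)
The plan is to invoke the Gelfand representation theorem for commutative $\mathcal{C}^{\ast}$-algebras with identity. Since $AP(\mathbb{R}^{N})$ has been identified above as a commutative $\mathcal{C}^{\ast}$-algebra with identity, its Gelfand spectrum $\mathcal{K}$ (the set of nonzero $\ast$-homomorphisms $AP(\mathbb{R}^{N})\to\mathbb{C}$, equipped with the weak-$\ast$ topology) is a compact Hausdorff space, and the Gelfand transform $\mathcal{G}:u\mapsto\widehat{u}$, defined by $\widehat{u}(\chi)=\chi(u)$ for $\chi\in\mathcal{K}$, is an isometric $\ast$-isomorphism of $AP(\mathbb{R}^{N})$ onto $\mathcal{C}(\mathcal{K})$. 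This is precisely the first assertion; all that must be checked is that the general Gelfand machinery applies, which it does once one knows (as recalled in the excerpt) that $AP(\mathbb{R}^{N})$ is a unital commutative $\mathcal{C}^{\ast}$-algebra. The fact that each $u\in AP(\mathbb{R}^{N})$ is thereby "viewed as a restriction to $\mathbb{R}^{N}$ of a unique element of $\mathcal{C}(\mathcal{K})$" follows because $\mathbb{R}^{N}$ embeds densely in $\mathcal{K}$ via $y\mapsto\chi_{y}$, where $\chi_{y}(u)=u(y)$ is evaluation at $y$; thus $\widehat{u}\big|_{\mathbb{R}^{N}}=u$ under this identification, and density of $\mathbb{R}^{N}$ in $\mathcal{K}$ forces the extension to be unique.

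For the integral representation of the mean value, the strategy is to show that $u\mapsto\int_{\mathcal{K}}\mathcal{G}(u)\,d\beta$ is a positive, translation-invariant, normalized linear functional on $AP(\mathbb{R}^{N})$, and then appeal to the uniqueness of such a functional, which forces it to coincide with $\mathfrak{M}$. Concretely, one first recalls that $\mathcal{K}$, being the spectrum of $AP(\mathbb{R}^{N})$, carries a natural compact abelian group structure (it is the Bohr compactification $b\mathbb{R}^{N}$): the group operation is inherited from the density of $\mathbb{R}^{N}$, and translation by $a\in\mathbb{R}^{N}$ on $AP(\mathbb{R}^{N})$ corresponds under $\mathcal{G}$ to translation by the image of $a$ in $\mathcal{K}$. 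Since $\beta$ is the normalized Haar measure on the compact group $\mathcal{K}$, the functional $u\mapsto\int_{\mathcal{K}}\mathcal{G}(u)\,d\beta$ is invariant under all translates $u(\cdot+a)$, $a\in\mathbb{R}^{N}$, is positive (because $\mathcal{G}$ preserves positivity and $\beta\geq 0$), and sends the constant function $1$ to $1$.

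The final step is to match this functional with $\mathfrak{M}$. By property (P)$_{2}$, $\mathfrak{M}(u)$ is the unique constant in the closed convex hull of $\{u(\cdot+a):a\in\mathbb{R}^{N}\}$ in $\mathcal{B}(\mathbb{R}^{N})$. Applying the translation-invariant, positive, normalized functional $L(u):=\int_{\mathcal{K}}\mathcal{G}(u)\,d\beta$ to any finite convex combination $\sum_{k}\lambda_{k}u(\cdot+a_{k})$ gives back $L(u)$ by invariance; passing to the closure (using $|L(u)|\leq\|u\|_{\infty}$, i.e. continuity of $L$ on $\mathcal{B}(\mathbb{R}^{N})$, which holds since $\|\mathcal{G}(u)\|_{\mathcal{C}(\mathcal{K})}=\|u\|_{\infty}$ and $\beta(\mathcal{K})=1$), one finds that $L$ takes the same value on every element of that closed convex hull; in particular $L(u)=L(\mathfrak{M}(u))=\mathfrak{M}(u)\cdot L(1)=\mathfrak{M}(u)$. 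Hence $\mathfrak{M}(u)=\int_{\mathcal{K}}\mathcal{G}(u)\,d\beta$ for all $u\in AP(\mathbb{R}^{N})$, as claimed.

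The main obstacle, and the point requiring the most care, is establishing that the compact group structure on $\mathcal{K}$ is compatible with translations on $AP(\mathbb{R}^{N})$ — i.e. that $\mathcal{G}(u(\cdot+a))=\mathcal{G}(u)(\cdot+\iota(a))$ where $\iota:\mathbb{R}^{N}\to\mathcal{K}$ is the canonical dense embedding — so that Haar measure invariance on $\mathcal{K}$ translates into translation invariance on $AP(\mathbb{R}^{N})$. Once this compatibility and the density of $\iota(\mathbb{R}^{N})$ in $\mathcal{K}$ are in hand, the rest is a routine uniqueness argument for the mean value functional. In a write-up one may simply cite the standard theory of the Bohr compactification (e.g. \cite{Besicovitch}, \cite{Levitan}) for these structural facts and present the identification $\mathfrak{M}=\int_{\mathcal{K}}(\cdot)\,d\beta$ as a consequence.
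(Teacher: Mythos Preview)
Your argument is correct and follows exactly the route the paper intends: the paper does not actually prove Theorem~\ref{t2.1} but simply states it as a consequence of the Gelfand representation theory of commutative $\mathcal{C}^{\ast}$-algebras, together with the standard identification of $\mathcal{K}$ with the Bohr compactification $b\mathbb{R}^{N}$. Your write-up supplies the details the paper omits (the dense embedding $\mathbb{R}^{N}\hookrightarrow\mathcal{K}$, the compatibility of translations, and the uniqueness argument matching $\mathfrak{M}$ with the Haar integral), so it is strictly more than the paper offers but entirely in the same spirit.
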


The isometric $\ast $-isomorphism $\mathcal{G}$ of the above theorem is
referred to as the Gelfand transformation. The image $\mathcal{G}(u)$ of $u$
will very often be denoted by $\widehat{u}$.

We introduce the space $AP^{\infty }(\mathbb{R}^{N})=\{u\in AP(\mathbb{R}%
^{N}):D_{y}^{\alpha }u\in AP(\mathbb{R}^{N})$ for every $\alpha =(\alpha
_{1},\ldots ,\alpha _{N})\in \mathbb{N}^{N}\}$ where $D_{y}^{\alpha }=\frac{%
\partial ^{\left\vert \alpha \right\vert }}{\partial y_{1}^{\alpha
_{1}}\ldots \partial y_{N}^{\alpha _{N}}}$. For $m\in \mathbb{N}$ (the non
negative integers) and for $u\in AP^{\infty }(\mathbb{R}^{N})$ we set $%
\left\Vert \left\vert u\right\vert \right\Vert _{m}=\sup_{\left\vert \alpha
\right\vert \leq m}\sup_{y\in \mathbb{R}^{N}}\left\vert D_{y}^{\alpha
}u\right\vert $ (which defines a norm on $AP^{\infty }(\mathbb{R}^{N})$).
Then $AP^{\infty }(\mathbb{R}^{N})$ is a Fr\'{e}chet space with respect to
the natural topology of projective limit, defined by the increasing family
of norms $\left\Vert \left\vert \cdot \right\vert \right\Vert _{m}$ ($m\in 
\mathbb{N}$).

Next, let $B_{AP}^{p}(\mathbb{R}^{N})$ ($1\leq p<\infty $) denote the space
of Besicovitch almost periodic functions on $\mathbb{R}^{N}$, that is the
closure of $AP(\mathbb{R}^{N})$ with respect to the Besicovitch seminorm 
\begin{equation*}
\left\Vert u\right\Vert _{p}=\left( \underset{r\rightarrow +\infty }{\lim
\sup }\frac{1}{\left\vert B_{r}\right\vert }\int_{B_{r}}\left\vert
u(y)\right\vert ^{p}dy\right) ^{1/p}
\end{equation*}%
where $B_{r}$ is the open ball of $\mathbb{R}^{N}$ of radius $r$ centered at
the origin. It is known that $B_{AP}^{p}(\mathbb{R}^{N})$ is a complete
seminormed vector space verifying $B_{AP}^{q}(\mathbb{R}^{N})\subset
B_{AP}^{p}(\mathbb{R}^{N})$ for $1\leq p\leq q<\infty $. Using this last
property one may naturally define the space $B_{AP}^{\infty }(\mathbb{R}%
^{N}) $ as follows: 
\begin{equation*}
B_{AP}^{\infty }(\mathbb{R}^{N})=\{f\in \cap _{1\leq p<\infty }B_{AP}^{p}(%
\mathbb{R}^{N}):\sup_{1\leq p<\infty }\left\Vert f\right\Vert _{p}<\infty \}%
\text{.}\;\;\;\;\;\;\;\;\;
\end{equation*}%
We endow $B_{AP}^{\infty }(\mathbb{R}^{N})$ with the seminorm $\left[ f%
\right] _{\infty }=\sup_{1\leq p<\infty }\left\Vert f\right\Vert _{p}$,
which makes it a complete seminormed space. We recall that the spaces $%
B_{AP}^{p}(\mathbb{R}^{N})$ ($1\leq p\leq \infty $) are not Fr\'{e}chet
spaces since they are not separated. The following properties are worth
noticing \cite{CMP, NA}:

\begin{itemize}
\item[(\textbf{1)}] The Gelfand transformation $\mathcal{G}:AP(\mathbb{R}%
^{N})\rightarrow \mathcal{C}(\mathcal{K})$ extends by continuity to a unique
continuous linear mapping, still denoted by $\mathcal{G}$, of $B_{AP}^{p}(%
\mathbb{R}^{N})$ into $L^{p}(\mathcal{K})$, which in turn induces an
isometric isomorphism $\mathcal{G}_{1}$, of $B_{AP}^{p}(\mathbb{R}^{N})/%
\mathcal{N}=\mathcal{B}_{AP}^{p}(\mathbb{R}^{N})$ onto $L^{p}(\mathcal{K})$
(where $\mathcal{N}=\{u\in B_{AP}^{p}(\mathbb{R}^{N}):\mathcal{G}(u)=0\}$).
Moreover if $u\in B_{AP}^{p}(\mathbb{R}^{N})\cap L^{\infty }(\mathbb{R}^{N})$
then $\mathcal{G}(u)\in L^{\infty }(\mathcal{K})$ and $\left\Vert \mathcal{G}%
(u)\right\Vert _{L^{\infty }(\mathcal{K})}\leq \left\Vert u\right\Vert
_{L^{\infty }(\mathbb{R}^{N})}$.

\item[(\textbf{2)}] The mean value $\mathfrak{M}$, defined on $AP(\mathbb{R}%
^{N})$, extends by continuity to a positive continuous linear form (still
denoted by $\mathfrak{M}$) on $B_{AP}^{p}(\mathbb{R}^{N})$ satisfying $%
\mathfrak{M}(u)=\int_{\mathcal{K}}\mathcal{G}(u)d\beta $ and $\mathfrak{M}%
(u(\cdot +a))=\mathfrak{M}(u)$ for each $u\in B_{AP}^{p}(\mathbb{R}^{N})$
and all $a\in \mathbb{R}^{N}$, where $u(\cdot +a)(y)=u(y+a)$ for almost all $%
y\in \mathbb{R}^{N}$. Moreover for $u\in B_{AP}^{p}(\mathbb{R}^{N})$ we have 
$\left\Vert u\right\Vert _{p}=\left[ \mathfrak{M}(\left\vert u\right\vert
^{p})\right] ^{1/p}$.
\end{itemize}

Spaces of almost periodic functions with values in a Banach space are
defined in a natural way, we refer to \cite{Blot} for details. Keep the
following notations in mind: $AP(\mathbb{R}^{N};\mathbb{C})=AP(\mathbb{R}%
^{N}) $ and $B_{AP}^{p}(\mathbb{R}^{N};\mathbb{C})=B_{AP}^{p}(\mathbb{R}%
^{N}) $.

We can also define the notion of almost periodic distributions. To do this,
let $\mathcal{B}_{\infty }(\mathbb{R}^{N})$ denote the space of all $%
\mathcal{C}^{\infty }$ functions in $\mathbb{R}^{N}$ that are bounded
together with all their derivatives of any order. We equip $\mathcal{B}%
_{\infty }(\mathbb{R}^{N})$ with the locally convex topology defined by the
increasingly filtered separating family of norms $\left\Vert \left\vert
\cdot \right\vert \right\Vert _{m}$ ($m\in \mathbb{N}$). As usual, let $%
\mathcal{D}(\mathbb{R}^{N})$ denote the subspace of $\mathcal{B}_{\infty }(%
\mathbb{R}^{N})$ consisting of functions with compact support. We denote by $%
\mathcal{B}_{\infty }^{\prime }(\mathbb{R}^{N})$ the topological dual of $%
\mathcal{B}_{\infty }(\mathbb{R}^{N})$. We recall that $\mathcal{B}_{\infty
}^{\prime }(\mathbb{R}^{N})$ is not a subspace of $\mathcal{D}^{\prime }(%
\mathbb{R}^{N})$ (the usual space of distributions on $\mathbb{R}^{N}$)
since $\mathcal{D}(\mathbb{R}^{N})$ is not dense in $\mathcal{B}_{\infty }(%
\mathbb{R}^{N})$; see \cite{LS}. We endow $\mathcal{B}_{\infty }^{\prime }(%
\mathbb{R}^{N})$ with the strong dual topology. For a distribution $T\in 
\mathcal{D}^{\prime }(\mathbb{R}^{N})$ we define its translate $\tau _{a}T$ (%
$a\in \mathbb{R}^{N}$) as follows: 
\begin{equation*}
\left\langle \tau _{a}T,\varphi \right\rangle =\left\langle T,\varphi (\cdot
-a)\right\rangle \text{ for any }\varphi \in \mathcal{D}(\mathbb{R}^{N}).
\end{equation*}

With this in mind, we say that a distribution $T$ is an almost periodic
distribution if $T\in \mathcal{B}_{\infty }^{\prime }(\mathbb{R}^{N})$ and
further the set of translates $\{\tau _{a}T:a\in \mathbb{R}^{N}\}$ is
relatively compact in $\mathcal{B}_{\infty }^{\prime }(\mathbb{R}^{N})$. We
denote by $\mathcal{B}_{AP}^{\prime }(\mathbb{R}^{N})$ the space of all
almost periodic distributions on $\mathbb{R}^{N}$. $\mathcal{B}_{AP}^{\prime
}(\mathbb{R}^{N})$ is endowed with the relative topology on $\mathcal{B}%
_{\infty }^{\prime }(\mathbb{R}^{N})$. The following result holds \cite[p.
206, Section 9]{LS}.

\begin{proposition}
\label{p0.1} The following assertions are equivalent.

\begin{itemize}
\item[(i)] $T\in \mathcal{B}_{AP}^{\prime }(\mathbb{R}^{N})$;

\item[(ii)] $T\in \mathcal{B}_{\infty }^{\prime }(\mathbb{R}^{N})$ is a
finite sum of derivatives of functions in $AP(\mathbb{R}^{N})$;

\item[(iii)] $T\in \mathcal{B}_{\infty }^{\prime }(\mathbb{R}^{N})$ and $%
T\ast \varphi \in AP^{\infty }(\mathbb{R}^{N})$ for any $\varphi \in 
\mathcal{D}(\mathbb{R}^{N})$.
\end{itemize}
\end{proposition}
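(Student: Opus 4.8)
The plan is to establish the cyclic chain of implications (ii) $\Rightarrow$ (i) $\Rightarrow$ (iii) $\Rightarrow$ (ii); the first two steps are soft, while the last one carries all of the difficulty. For (ii) $\Rightarrow$ (i), write $T=\sum_{|\alpha|\leq m}D^{\alpha}f_{\alpha}$ with $f_{\alpha}\in AP(\mathbb{R}^{N})$; since translation commutes with differentiation, $\tau_{a}T=\sum_{|\alpha|\leq m}D^{\alpha}(\tau_{a}f_{\alpha})$, and because $AP(\mathbb{R}^{N})$ embeds continuously into $\mathcal{B}_{\infty}^{\prime}(\mathbb{R}^{N})$ and differentiation is continuous on $\mathcal{B}_{\infty}^{\prime}(\mathbb{R}^{N})$, the orbit $\{\tau_{a}T:a\in\mathbb{R}^{N}\}$ lies in a finite sum of continuous images of the sets $\{\tau_{a}f_{\alpha}:a\in\mathbb{R}^{N}\}$. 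Each of these is relatively compact (that is precisely what $f_{\alpha}\in AP$ means), a finite sum of relatively compact sets in a complete locally convex space is relatively compact, hence so is $\{\tau_{a}T\}$, i.e. $T\in\mathcal{B}_{AP}^{\prime}(\mathbb{R}^{N})$. For (i) $\Rightarrow$ (iii), I would fix $\varphi\in\mathcal{D}(\mathbb{R}^{N})$ and use the elementary identity $\tau_{a}(T\ast\varphi)=(\tau_{a}T)\ast\varphi$. Convolution by $\varphi$ maps $\mathcal{B}_{\infty}^{\prime}(\mathbb{R}^{N})$ continuously into $\mathcal{B}_{\infty}(\mathbb{R}^{N})$, hence into $\mathcal{B}(\mathbb{R}^{N})$; applied to the relatively compact orbit $\{\tau_{a}T\}$ it shows that $\{\tau_{a}(T\ast\varphi)\}$ is relatively compact in $\mathcal{B}(\mathbb{R}^{N})$, so $T\ast\varphi\in AP(\mathbb{R}^{N})$. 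Running the same argument with $D^{\beta}\varphi$ in place of $\varphi$ and using $D^{\beta}(T\ast\varphi)=T\ast D^{\beta}\varphi$ yields $T\ast\varphi\in AP^{\infty}(\mathbb{R}^{N})$.

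The heart of the matter is (iii) $\Rightarrow$ (ii), and here I would use a parametrix of the iterated operator $(1-\Delta)^{k}$, with $k$ chosen large. Let $E$ be a fundamental solution of $(1-\Delta)^{k}$ and let $\chi\in\mathcal{D}(\mathbb{R}^{N})$ satisfy $\chi\equiv1$ near the origin. Since $E$ is smooth away from $0$, the commutator terms in $(1-\Delta)^{k}(\chi E)-\chi(1-\Delta)^{k}E$ are supported where $\nabla\chi\neq0$, hence are smooth with compact support, which gives the parametrix identity $(1-\Delta)^{k}(\chi E)=\delta+\omega$ with $\omega\in\mathcal{D}(\mathbb{R}^{N})$; moreover $\chi E$ is a compactly supported function of class $C^{L(k)}$ with $L(k)\to\infty$ as $k\to\infty$. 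Convolving the parametrix identity with $T$ (legitimate because $\chi E$ and $\omega$ have compact support) gives
$$
T=(1-\Delta)^{k}\bigl((\chi E)\ast T\bigr)-\omega\ast T .
$$
By (iii) one has $\omega\ast T\in AP^{\infty}(\mathbb{R}^{N})\subset AP(\mathbb{R}^{N})$. For the remaining term, the point is that $T\in\mathcal{B}_{\infty}^{\prime}(\mathbb{R}^{N})$ is continuous for some finite index $M$ of the seminorms $\left\Vert \left\vert \cdot \right\vert \right\Vert _{M}$; choosing $k$ so large that $\chi E\in C^{M+1}$, the potential $(\chi E)\ast T$ is a bounded, uniformly continuous function on $\mathbb{R}^{N}$. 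Approximating $\chi E$ in the $C^{M+1}$-norm by mollifications $g_{j}=(\chi E)\ast\rho_{1/j}\in\mathcal{D}(\mathbb{R}^{N})$, one gets $g_{j}\ast T\in AP^{\infty}(\mathbb{R}^{N})$ by (iii) and $g_{j}\ast T=((\chi E)\ast T)\ast\rho_{1/j}\to(\chi E)\ast T$ uniformly on $\mathbb{R}^{N}$; since a uniform limit of almost periodic functions is almost periodic, $(\chi E)\ast T\in AP(\mathbb{R}^{N})$. Expanding $(1-\Delta)^{k}$ as a finite linear combination of the derivatives $D^{2\alpha}$ with $|\alpha|\leq k$, the displayed identity exhibits $T$ as a finite sum of derivatives of almost periodic functions, which is (ii).

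The main obstacle is exactly this structural step: one must turn the qualitative hypothesis ``every regularization $T\ast\varphi$ is almost periodic'' into the rigid finite-order form (ii). Two points deserve care. First, the parametrix must be built from an operator whose cut-off fundamental solution is at least $C^{M+1}$ — the exponent $M$ being dictated by the order of $T$ as a continuous functional on $\mathcal{B}_{\infty}(\mathbb{R}^{N})$ — so that $(\chi E)\ast T$ is an honest uniformly continuous function; this regularity is what makes the mollifications $g_{j}\ast T$ converge to $(\chi E)\ast T$ uniformly, and uniform convergence is the only mechanism available to transfer almost periodicity from $g_{j}\ast T$ to $(\chi E)\ast T$. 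Second, because $\mathcal{B}_{\infty}^{\prime}(\mathbb{R}^{N})$ is not a subspace of $\mathcal{D}^{\prime}(\mathbb{R}^{N})$, all convolutions in the argument must be kept within the compactly supported regime, where associativity of convolution — and hence the passage from the parametrix identity to its convolution with $T$ — is valid. The soft implications, by contrast, use only the elementary properties of $\mathcal{B}_{\infty}^{\prime}(\mathbb{R}^{N})$ and of $AP(\mathbb{R}^{N})$ recalled above.
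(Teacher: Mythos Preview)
The paper does not actually prove this proposition; it records the statement with a citation to Schwartz \cite[p.~206, Section~9]{LS} and moves on. Your argument is correct and is essentially the classical one found there: the soft implications (ii)$\Rightarrow$(i) and (i)$\Rightarrow$(iii) via continuity of differentiation and of convolution by test functions on $\mathcal{B}'_\infty$, and the structural step (iii)$\Rightarrow$(ii) via the Bessel parametrix $(1-\Delta)^k(\chi E)=\delta+\omega$, with $k$ chosen so that $\chi E\in C^{M+1}$ matches the order $M$ of $T$, so that $(\chi E)\ast T$ is the uniform limit of the almost periodic functions $g_j\ast T$ and hence almost periodic itself.

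One remark on the point you already flag at the end. The paper defines $\mathcal{B}'_\infty(\mathbb{R}^N)$ as the full topological dual of $\mathcal{B}_\infty(\mathbb{R}^N)$ and explicitly notes that this is not contained in $\mathcal{D}'(\mathbb{R}^N)$. Read literally, this makes statement (ii) awkward --- an $AP$ function does not pair by integration against an arbitrary $\psi\in\mathcal{B}_\infty$ --- and obstructs the limit $\rho_{1/j}\ast T\to T$ needed to close the parametrix identity in $\mathcal{D}'$. The proposition and your proof should be understood in Schwartz's intended framework of bounded distributions (a subspace of $\mathcal{D}'$); there your argument goes through without further difficulty.
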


It can be easily checked that $\varphi T\in \mathcal{B}_{AP}^{\prime }(%
\mathbb{R}^{N})$ whenever $\varphi \in AP^{\infty }(\mathbb{R}^{N})$ and $%
T\in \mathcal{B}_{AP}^{\prime }(\mathbb{R}^{N})$. It follows from part (iii)
of the above proposition that $AP^{\infty }(\mathbb{R}^{N})$ is dense in $%
\mathcal{B}_{AP}^{\prime }(\mathbb{R}^{N})$. Since $B_{AP}^{1}(\mathbb{R}%
^{N})$ is the closure of $AP^{\infty }(\mathbb{R}^{N})$ in the Besicovitch
seminorm whereas $\mathcal{B}_{AP}^{\prime }(\mathbb{R}^{N})$ is the closure
of $AP^{\infty }(\mathbb{R}^{N})$ in $\mathcal{B}_{\infty }^{\prime }(%
\mathbb{R}^{N})$ it follows that $B_{AP}^{1}(\mathbb{R}^{N})$ is
continuously embedded in $\mathcal{B}_{AP}^{\prime }(\mathbb{R}^{N})$. As a
first consequence of this, we have the

\begin{lemma}
\label{l0.1}The mean value $\mathfrak{M}$ considered as defined on $%
AP^{\infty }(\mathbb{R}^{N})$ extends by continuity to a unique continuous
linear form still denoted by $\mathfrak{M}$, on $\mathcal{B}_{AP}^{\prime }(%
\mathbb{R}^{N})$.
\end{lemma}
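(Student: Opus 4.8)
The plan is to build the extension of $\mathfrak{M}$ from $AP^{\infty}(\mathbb{R}^{N})$ to $\mathcal{B}_{AP}^{\prime}(\mathbb{R}^{N})$ by exploiting the two density facts just recorded: $AP^{\infty}(\mathbb{R}^{N})$ is dense in $\mathcal{B}_{AP}^{\prime}(\mathbb{R}^{N})$ (from part (iii) of Proposition~\ref{p0.1}), and $B_{AP}^{1}(\mathbb{R}^{N})$ embeds continuously in $\mathcal{B}_{AP}^{\prime}(\mathbb{R}^{N})$. First I would note that on $B_{AP}^{1}(\mathbb{R}^{N})$ we already have a continuous linear form $\mathfrak{M}$ (property~(\textbf{2}) above), with $|\mathfrak{M}(u)|\le \|u\|_{1}$, and its restriction to the dense subspace $AP^{\infty}(\mathbb{R}^{N})$ is the classical mean value. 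Thus the task is really to show that this $\mathfrak{M}$, regarded as a form on $AP^{\infty}(\mathbb{R}^{N})$, is continuous \emph{for the topology induced by} $\mathcal{B}_{\infty}^{\prime}(\mathbb{R}^{N})$ (equivalently $\mathcal{B}_{AP}^{\prime}(\mathbb{R}^{N})$), so that it extends uniquely by density to all of $\mathcal{B}_{AP}^{\prime}(\mathbb{R}^{N})$.

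The key step is this continuity estimate. Given $\varphi\in AP^{\infty}(\mathbb{R}^{N})$, I would pick a fixed mollifier $\psi\in\mathcal{D}(\mathbb{R}^{N})$ with $\int\psi=1$ and examine $\varphi\ast\check{\psi}$ where $\check{\psi}(x)=\psi(-x)$; by the invariance and averaging properties of $\mathfrak{M}$ one has $\mathfrak{M}(\varphi\ast\check{\psi})=\mathfrak{M}(\varphi)\int\psi=\mathfrak{M}(\varphi)$, since convolution with an integrable kernel of mass one does not change the mean value (this uses $\mathfrak{M}(\varphi(\cdot+a))=\mathfrak{M}(\varphi)$ together with Fubini). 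On the other hand, $\varphi\ast\check{\psi}=\langle T_{\varphi},\tau_{\bullet}\psi\rangle$ evaluated appropriately, so that the value $\mathfrak{M}(\varphi)$ is expressed as a fixed continuous linear functional of $\varphi$ acting through pairing with the fixed test function $\psi$ — precisely the kind of expression that is continuous in the $\mathcal{B}_{\infty}^{\prime}$ topology (convergence $T_{j}\to T$ in $\mathcal{B}_{\infty}^{\prime}$ gives $T_{j}\ast\psi\to T\ast\psi$ uniformly, hence one can take means). Concretely: if $\varphi_{j}\to 0$ in $\mathcal{B}_{AP}^{\prime}(\mathbb{R}^{N})$, then $\varphi_{j}\ast\check{\psi}\to 0$ uniformly on $\mathbb{R}^{N}$, hence (the mean value being dominated by the sup norm on $AP(\mathbb{R}^{N})$) $\mathfrak{M}(\varphi_{j})=\mathfrak{M}(\varphi_{j}\ast\check{\psi})\to 0$. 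This establishes the required continuity, and the extension then follows from the standard principle that a continuous linear form on a dense subspace of a topological vector space extends uniquely by continuity.

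The main obstacle I anticipate is the delicate point that $\mathcal{B}_{\infty}^{\prime}(\mathbb{R}^{N})$ is \emph{not} a subspace of $\mathcal{D}^{\prime}(\mathbb{R}^{N})$ (as the excerpt emphasizes, $\mathcal{D}(\mathbb{R}^{N})$ is not dense in $\mathcal{B}_{\infty}(\mathbb{R}^{N})$), so one must be a little careful about what ``$T\ast\varphi$'' and the pairing with translates of a test function mean for $T\in\mathcal{B}_{AP}^{\prime}(\mathbb{R}^{N})$, and one must verify that the map $T\mapsto T\ast\psi$ is continuous from $\mathcal{B}_{AP}^{\prime}(\mathbb{R}^{N})$ (with its relative strong-dual topology) into $AP^{\infty}(\mathbb{R}^{N})$, or at least into $AP(\mathbb{R}^{N})$ with the sup norm. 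This is essentially the content of part~(iii) of Proposition~\ref{p0.1} combined with the fact that the sets $\{\tau_{a}\psi : a\in\mathbb{R}^{N}\}$ are bounded in $\mathcal{B}_{\infty}(\mathbb{R}^{N})$, so convergence in the strong dual topology is uniform on such sets; once that is granted, the argument above closes cleanly. Uniqueness of the extension is immediate from density of $AP^{\infty}(\mathbb{R}^{N})$ and the Hausdorff property relevant for the target $\mathbb{C}$.
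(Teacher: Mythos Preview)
Your argument is correct and follows the same overall skeleton as the paper's proof: establish that $\mathfrak{M}$ is continuous on $AP^{\infty}(\mathbb{R}^{N})$ for the topology induced by $\mathcal{B}_{\infty}^{\prime}(\mathbb{R}^{N})$, and then extend by density of $AP^{\infty}(\mathbb{R}^{N})$ in $\mathcal{B}_{AP}^{\prime}(\mathbb{R}^{N})$.

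The difference lies entirely in the treatment of the continuity step. The paper's proof is a single sentence that simply \emph{asserts} this continuity and invokes density; no justification is given. You, by contrast, supply an explicit mechanism: fix a mollifier $\psi\in\mathcal{D}(\mathbb{R}^{N})$ with $\int\psi=1$, use translation invariance of $\mathfrak{M}$ to get $\mathfrak{M}(\varphi\ast\check{\psi})=\mathfrak{M}(\varphi)$, and then observe that $\varphi_{j}\to 0$ in $\mathcal{B}_{AP}^{\prime}(\mathbb{R}^{N})$ forces $\varphi_{j}\ast\check{\psi}\to 0$ uniformly (since $\{\tau_{a}\psi:a\in\mathbb{R}^{N}\}$ is bounded in $\mathcal{B}_{\infty}(\mathbb{R}^{N})$ and the strong dual topology gives uniform convergence on bounded sets), whence $\mathfrak{M}(\varphi_{j})\to 0$. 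This convolution trick is a genuine addition: it fills the gap the paper leaves open and makes the proof self-contained. The obstacle you flag---that $\mathcal{B}_{\infty}^{\prime}(\mathbb{R}^{N})$ is not a subspace of $\mathcal{D}^{\prime}(\mathbb{R}^{N})$---is real but handled correctly by your appeal to part~(iii) of Proposition~\ref{p0.1} and the boundedness argument.
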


\begin{proof}
The mapping $\mathfrak{M}$ being continuous on $AP^{\infty }(\mathbb{R}^{N})$
(endowed this time with the relative topology on $\mathcal{B}_{\infty
}^{\prime }(\mathbb{R}^{N})$), the result is therefore an immediate
consequence of the density of $AP^{\infty }(\mathbb{R}^{N})$ in $\mathcal{B}%
_{AP}^{\prime }(\mathbb{R}^{N})$.
\end{proof}

$\mathfrak{M}(T)$ is still called the mean value of $T\in \mathcal{B}%
_{AP}^{\prime }(\mathbb{R}^{N})$. Now, let $T\in \mathcal{B}_{AP}^{\prime }(%
\mathbb{R}^{N})$. Using the preceding lemma and the density of $AP^{\infty }(%
\mathbb{R}^{N})$ in $\mathcal{B}_{AP}^{\prime }(\mathbb{R}^{N})$, we may
define the spectrum of $T$ as Sp$(T)=\{k\in \mathbb{R}^{N}:\mathfrak{M}(%
\overline{\gamma }_{k}T)\neq 0\}$, a countable subset of $\mathbb{R}^{N}$,
where $\gamma _{k}(y)=\exp (ik\cdot y)$ for $y\in \mathbb{R}^{N}$ and $%
\overline{\gamma }_{k}$ stands for the complex conjugation of $\gamma _{k}$.
We know from \cite[p.208]{LS} that if $\mathfrak{M}(\overline{\gamma }%
_{k}T)=0$ for any $k\in $Sp$(T)$ then $T=0$.

Let $\mathbb{R}_{y,\tau }^{N+1}=\mathbb{R}_{y}^{N}\times \mathbb{R}_{\tau }$
denote the space $\mathbb{R}^{N}\times \mathbb{R}$ with generic variables $%
(y,\tau )$. It is known that $AP(\mathbb{R}_{y,\tau }^{N+1})=AP(\mathbb{R}%
_{\tau };AP(\mathbb{R}_{y}^{N}))$ is the closure in $\mathcal{B}(\mathbb{R}%
_{y,\tau }^{N+1})$ of the tensor product $AP(\mathbb{R}_{y}^{N})\otimes AP(%
\mathbb{R}_{\tau })$ \cite{Chou}. We may sometimes set $A_{y}=AP(\mathbb{R}%
_{y}^{N})$, $A_{\tau }=AP(\mathbb{R}_{\tau })$ and $A=AP(\mathbb{R}_{y,\tau
}^{N+1})$. Correspondingly, we will denote the mean value on $A_{\zeta }$ ($%
\zeta =y,\tau $) by $\mathfrak{M}_{\zeta }$.

Now let $1\leq p<\infty $ and consider the $N$-parameter group of isometries 
$\{T(y):y\in \mathbb{R}^{N}\}$ defined by 
\begin{equation*}
T(y):\mathcal{B}_{AP}^{p}(\mathbb{R}^{N})\rightarrow \mathcal{B}_{AP}^{p}(%
\mathbb{R}^{N})\text{,\ }T(y)(u+\mathcal{N})=u(\cdot +y)+\mathcal{N}\text{
for }u\in B_{AP}^{p}(\mathbb{R}^{N}).
\end{equation*}%
Since $AP(\mathbb{R}^{N})$ consists of uniformly continuous functions, $%
\{T(y):y\in \mathbb{R}^{N}\}$ is a strongly continuous group in the
following sense: $T(y)(u+\mathcal{N})\rightarrow u+\mathcal{N}$ in $\mathcal{%
B}_{AP}^{p}(\mathbb{R}^{N})$ as $\left\vert y\right\vert \rightarrow 0$.
Using the isometric isomorphism $\mathcal{G}_{1}$ we associated to $%
\{T(y):y\in \mathbb{R}^{N}\}$ the following $N$-parameter group $\{\overline{%
T}(y):y\in \mathbb{R}^{N}\}$ defined by 
\begin{equation*}
\begin{array}{l}
\overline{T}(y):L^{p}(\mathcal{K})\rightarrow L^{p}(\mathcal{K}) \\ 
\overline{T}(y)\mathcal{G}_{1}(u+\mathcal{N})=\mathcal{G}_{1}(T(y)(u+%
\mathcal{N}))=\mathcal{G}_{1}(u(\cdot +y)+\mathcal{N})\text{\ for }u\in
B_{AP}^{p}(\mathbb{R}^{N})\text{.}%
\end{array}%
\end{equation*}%
The group $\{\overline{T}(y):y\in \mathbb{R}^{N}\}$ is also strongly
continuous. The infinitesimal generator of $T(y)$ (resp. $\overline{T}(y)$)
along the $i$th coordinate direction, denoted by $D_{i,p}$ (resp. $\partial
_{i,p}$) is defined by 
\begin{equation*}
\begin{array}{l}
D_{i,p}u=\lim_{s\rightarrow 0}s^{-1}\left( T(se_{i})u-u\right) \text{\ in }%
\mathcal{B}_{AP}^{p}(\mathbb{R}^{N})\text{ } \\ 
\text{(resp. }\partial _{i,p}v=\lim_{s\rightarrow 0}s^{-1}\left( \overline{T}%
(se_{i})v-v\right) \text{\ in }L^{p}(\mathcal{K})\text{),}%
\end{array}%
\end{equation*}%
where we have used the same letter $u$ to denote the equivalence class of an
element $u\in B_{AP}^{p}(\mathbb{R}^{N})$ in $\mathcal{B}_{AP}^{p}(\mathbb{R}%
^{N})$, $e_{i}=(\delta _{ij})_{1\leq j\leq N}$ ($\delta _{ij}$ being the
Kronecker $\delta $). The domain of $D_{i,p}$ (resp. $\partial _{i,p}$) in $%
\mathcal{B}_{AP}^{p}(\mathbb{R}^{N})$ (resp. $L^{p}(\mathcal{K})$) is
denoted by $\mathcal{D}_{i,p}$ (resp. $\mathcal{W}_{i,p}$). It emerges from
the general theory of semigroups \cite[Chap. VIII, Section 1]{DS} that the
following result holds.

\begin{proposition}
\label{p2.1}$\mathcal{D}_{i,p}$ (resp. $\mathcal{W}_{i,p}$) is a vector
subspace of $\mathcal{B}_{AP}^{p}(\mathbb{R}^{N})$ (resp. $L^{p}(\mathcal{K}%
) $); $D_{i,p}:\mathcal{D}_{i,p}\rightarrow \mathcal{B}_{AP}^{p}(\mathbb{R}%
^{N})$ (resp. $\partial _{i,p}:\mathcal{W}_{i,p}\rightarrow L^{p}(\mathcal{K}%
)$) is a linear operator; $\mathcal{D}_{i,p}$ (resp. $\mathcal{W}_{i,p}$) is
dense in $\mathcal{B}_{AP}^{p}(\mathbb{R}^{N})$ (resp. $L^{p}(\mathcal{K})$%
), and the graph of $D_{i,p}$ (resp. $\partial _{i,p}$) is closed in $%
\mathcal{B}_{AP}^{p}(\mathbb{R}^{N})\times \mathcal{B}_{AP}^{p}(\mathbb{R}%
^{N})$ (resp. $L^{p}(\mathcal{K})\times L^{p}(\mathcal{K})$).
\end{proposition}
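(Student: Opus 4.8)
The plan is to view Proposition~\ref{p2.1} as a particular case of the classical Hille--Yosida theory of strongly continuous one-parameter semigroups (here, groups) of bounded linear operators on a Banach space, applied to the restriction of the $N$-parameter group $\{T(y):y\in\mathbb{R}^{N}\}$ (resp. $\{\overline{T}(y):y\in\mathbb{R}^{N}\}$) to the $i$th coordinate axis.

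First I would record that $\mathcal{B}_{AP}^{p}(\mathbb{R}^{N})=B_{AP}^{p}(\mathbb{R}^{N})/\mathcal{N}$ is a genuine Banach space: as recalled above, $\mathcal{G}_{1}$ is an isometric isomorphism of $\mathcal{B}_{AP}^{p}(\mathbb{R}^{N})$ onto $L^{p}(\mathcal{K})$, which is complete. Put $S(s)=T(se_{i})$ for $s\in\mathbb{R}$. Each $S(s)$ is a linear isometry of $\mathcal{B}_{AP}^{p}(\mathbb{R}^{N})$, one has $S(s+s')=S(s)S(s')$ and $S(0)=\mathrm{id}$, and the strong continuity of $\{T(y)\}$ noted above gives $S(s)u\to u$ in $\mathcal{B}_{AP}^{p}(\mathbb{R}^{N})$ as $s\to 0$; thus $\{S(s)\}_{s\in\mathbb{R}}$ is a $C_{0}$-group. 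Moreover $\mathcal{G}_{1}$ intertwines $T(y)$ and $\overline{T}(y)$ by construction (i.e. $\overline{T}(y)=\mathcal{G}_{1}T(y)\mathcal{G}_{1}^{-1}$), so $\{\overline{T}(se_{i})\}_{s\in\mathbb{R}}$ is the associated $C_{0}$-group on $L^{p}(\mathcal{K})$ and one has $\partial_{i,p}=\mathcal{G}_{1}\circ D_{i,p}\circ\mathcal{G}_{1}^{-1}$ with $\mathcal{W}_{i,p}=\mathcal{G}_{1}(\mathcal{D}_{i,p})$. Hence it suffices to prove the four assertions for $A:=D_{i,p}$ with domain $D(A):=\mathcal{D}_{i,p}$; those for $\partial_{i,p}$ then follow by transport of structure through the isometry $\mathcal{G}_{1}$.

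It then remains to invoke the standard properties of the infinitesimal generator $A$ of a $C_{0}$-group $\{S(s)\}$ on a Banach space $X=\mathcal{B}_{AP}^{p}(\mathbb{R}^{N})$, all of which is contained in \cite[Chap.~VIII, Section~1]{DS}. Linearity of $D(A)$ and of $A$ is immediate from the linearity of each $S(s)$ and of limits. For density one introduces, for $u\in X$ and $t>0$, the (Bochner) average $u_{t}=t^{-1}\int_{0}^{t}S(s)u\,ds$, which is well defined since $s\mapsto S(s)u$ is continuous into the Banach space $X$; a short computation with the difference quotient $h^{-1}(S(h)u_{t}-u_{t})$ shows $u_{t}\in D(A)$, while $u_{t}\to u$ as $t\to 0^{+}$ by strong continuity, so $D(A)$ is dense in $X$. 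For closedness one uses the identity $S(s)u-u=\int_{0}^{s}S(\sigma)Au\,d\sigma$ valid for $u\in D(A)$: if $u_{n}\to u$ and $Au_{n}\to v$ in $X$, passing to the limit gives $S(s)u-u=\int_{0}^{s}S(\sigma)v\,d\sigma$, and dividing by $s$ and letting $s\to 0^{+}$ yields $u\in D(A)$ with $Au=v$; thus the graph of $A$ is closed in $X\times X$.

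The argument presents no real obstacle. The only points deserving a word of care are the completeness of $\mathcal{B}_{AP}^{p}(\mathbb{R}^{N})$, needed so that the vector-valued integrals above make sense, which is guaranteed by its isometry with $L^{p}(\mathcal{K})$, and the remark that the restriction to a coordinate line of the strongly continuous $N$-parameter group $\{T(y)\}$ is again strongly continuous; both are immediate. Everything else is a mechanical transcription of the abstract semigroup results cited above.
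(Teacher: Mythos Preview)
Your proposal is correct and is essentially the paper's own approach: the paper does not give a proof but simply records that the proposition ``emerges from the general theory of semigroups \cite[Chap.~VIII, Section~1]{DS}'', which is precisely the Hille--Yosida machinery you invoke and sketch. Your additional remarks (completeness of $\mathcal{B}_{AP}^{p}(\mathbb{R}^{N})$ via the isometry $\mathcal{G}_{1}$, and the transport from $D_{i,p}$ to $\partial_{i,p}$ through $\mathcal{G}_{1}$) are exactly the right details to make the citation self-contained.
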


In the sequel we denote by $\varrho $ the canonical mapping of $B_{AP}^{p}(%
\mathbb{R}^{N})$ onto $\mathcal{B}_{AP}^{p}(\mathbb{R}^{N})$, that is, $%
\varrho (u)=u+\mathcal{N}$ for $u\in B_{AP}^{p}(\mathbb{R}^{N})$. The
following properties are immediate. The verification can be found either in 
\cite[Chap. B1]{Vo-Khac} or in \cite{Blot}.

\begin{lemma}
\label{l3.1}Let $1\leq i\leq N$. \emph{(1)} If $u\in AP^{1}(\mathbb{R}^{N})$
then $\varrho (u)\in \mathcal{D}_{i,p}$ and 
\begin{equation}
D_{i,p}\varrho (u)=\varrho \left( \frac{\partial u}{\partial y_{i}}\right)
.\ \ \ \ \ \ \ \ \ \ \ \ \ \ \ \ \ \ \ \ \ \ \ \ \ \ \ \ \ \ \ \ \ 
\label{2.2''}
\end{equation}%
\emph{(2)} If $u\in \mathcal{D}_{i,p}$ then $\mathcal{G}_{1}(u)\in \mathcal{W%
}_{i,p}$ and $\mathcal{G}_{1}(D_{i,p}u)=\partial _{i,p}\mathcal{G}_{1}(u)$.
\end{lemma}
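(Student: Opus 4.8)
\textbf{Proof plan for Lemma \ref{l3.1}.}

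The plan is to treat the two assertions separately, both relying on the abstract semigroup framework recalled above together with the density of $AP^\infty(\mathbb{R}^N)$ (hence of $AP^1(\mathbb{R}^N)$) in the relevant spaces. For part (1), fix $u\in AP^1(\mathbb{R}^N)$ and $1\le i\le N$. I would form the difference quotient $s^{-1}(T(se_i)u-u)$, which at the level of representatives is the function $y\mapsto s^{-1}(u(y+se_i)-u(y))$. The key point is to show that this converges, in the Besicovitch seminorm $\|\cdot\|_p$, to $\partial u/\partial y_i$ as $s\to 0$. Since $u\in AP^1(\mathbb{R}^N)$, the partial derivative $\partial u/\partial y_i$ lies in $AP(\mathbb{R}^N)$, and the fundamental theorem of calculus gives
\begin{equation*}
\frac{u(y+se_i)-u(y)}{s}-\frac{\partial u}{\partial y_i}(y)=\frac{1}{s}\int_0^s\left(\frac{\partial u}{\partial y_i}(y+t e_i)-\frac{\partial u}{\partial y_i}(y)\right)dt.
\end{equation*}
Taking $\|\cdot\|_p$ of both sides and using that $\|\cdot\|_p$ is dominated by the sup norm (together with Jensen/Minkowski to pull the seminorm inside the average over $t$), the right-hand side is bounded by $\sup_{0\le t\le s}\|(\partial u/\partial y_i)(\cdot+te_i)-(\partial u/\partial y_i)(\cdot)\|_\infty$, which tends to $0$ as $s\to 0$ because elements of $AP(\mathbb{R}^N)$ are uniformly continuous. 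Hence $\varrho(u)\in\mathcal{D}_{i,p}$ and $D_{i,p}\varrho(u)=\varrho(\partial u/\partial y_i)$, which is exactly \eqref{2.2''}.

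For part (2), let $u\in\mathcal{D}_{i,p}$. By definition $s^{-1}(T(se_i)u-u)\to D_{i,p}u$ in $\mathcal{B}_{AP}^p(\mathbb{R}^N)$. Applying the isometric isomorphism $\mathcal{G}_1$, which is continuous, and using the intertwining relation $\mathcal{G}_1(T(y)v)=\overline{T}(y)\mathcal{G}_1(v)$ built into the definition of $\overline{T}(y)$, we get
\begin{equation*}
\frac{\overline{T}(se_i)\mathcal{G}_1(u)-\mathcal{G}_1(u)}{s}=\mathcal{G}_1\!\left(\frac{T(se_i)u-u}{s}\right)\longrightarrow \mathcal{G}_1(D_{i,p}u)\quad\text{in }L^p(\mathcal{K}).
\end{equation*}
By the very definition of the infinitesimal generator $\partial_{i,p}$ of $\overline{T}(\cdot)$, this says precisely that $\mathcal{G}_1(u)\in\mathcal{W}_{i,p}$ and $\partial_{i,p}\mathcal{G}_1(u)=\mathcal{G}_1(D_{i,p}u)$.

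I do not expect any serious obstacle here: both parts are essentially bookkeeping within the semigroup formalism, and the only genuinely analytic input is the uniform continuity of almost periodic functions, which controls the difference quotient in part (1). The mildly delicate point is justifying the passage of the Besicovitch seminorm through the averaging integral in $t$; this is handled by Minkowski's integral inequality (or simply by the crude sup-norm bound, which already suffices since $\|\cdot\|_p\le\|\cdot\|_\infty$ on $AP(\mathbb{R}^N)$). Part (2) is then a formal consequence of the continuity and intertwining properties of $\mathcal{G}_1$, with no real content beyond unwinding definitions.
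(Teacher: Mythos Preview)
Your argument is correct in both parts: the fundamental-theorem-of-calculus estimate combined with uniform continuity of $\partial u/\partial y_i\in AP(\mathbb{R}^N)$ handles (1), and (2) is indeed just unwinding the intertwining relation $\overline{T}(y)\circ\mathcal{G}_1=\mathcal{G}_1\circ T(y)$ together with continuity of the isometry $\mathcal{G}_1$. The paper does not actually give a proof of this lemma---it simply refers to \cite[Chap.~B1]{Vo-Khac} and \cite{Blot}---so there is no in-paper argument to compare with; what you have written is exactly the standard direct verification one finds in those references.
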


We define the higher order derivatives as follows: $D_{p}^{\alpha
}=D_{1,p}^{\alpha _{1}}\circ \cdot \cdot \cdot \circ D_{N,p}^{\alpha _{N}}$
(resp. $\partial _{p}^{\alpha }=\partial _{1,p}^{\alpha _{1}}\circ \cdot
\cdot \cdot \circ \partial _{N,p}^{\alpha _{N}}$) for $\alpha =(\alpha
_{1},...,\alpha _{N})\in \mathbb{N}^{N}$ with $D_{i,p}^{\alpha
_{i}}=D_{i,p}\circ \cdot \cdot \cdot \circ D_{i,p}$, $\alpha _{i}$-times.
Now, set 
\begin{equation*}
\mathcal{B}_{AP}^{1,p}(\mathbb{R}^{N})=\cap _{i=1}^{N}\mathcal{D}%
_{i,p}=\{u\in \mathcal{B}_{AP}^{p}(\mathbb{R}^{N}):D_{i,p}u\in \mathcal{B}%
_{AP}^{p}(\mathbb{R}^{N})\ \forall 1\leq i\leq N\}
\end{equation*}%
and 
\begin{equation*}
\mathcal{D}_{AP}(\mathbb{R}^{N})=\{u\in \mathcal{B}_{AP}^{\infty }(\mathbb{R}%
^{N}):D_{\infty }^{\alpha }u\in \mathcal{B}_{AP}^{\infty }(\mathbb{R}^{N})\
\forall \alpha \in \mathbb{N}^{N}\}.
\end{equation*}%
One can show that $\mathcal{D}_{AP}(\mathbb{R}^{N})$ is dense in $\mathcal{B}%
_{AP}^{p}(\mathbb{R}^{N})$, $1\leq p<\infty $. Equipped with the norm 
\begin{equation*}
\left\Vert u\right\Vert _{\mathcal{B}_{AP}^{1,p}(\mathbb{R}^{N})}=\left(
\left\Vert u\right\Vert _{p}^{p}+\sum_{i=1}^{N}\left\Vert
D_{i,p}u\right\Vert _{p}^{p}\right) ^{1/p}\ \ (u\in \mathcal{B}_{AP}^{1,p}(%
\mathbb{R}^{N})),
\end{equation*}%
$\mathcal{B}_{AP}^{1,p}(\mathbb{R}^{N})$ is a Banach space: this comes from
the fact that the graph of $D_{i,p}$ is closed.

The counter-part of the above properties also holds with 
\begin{equation*}
W^{1,p}(\mathcal{K})=\cap _{i=1}^{N}\mathcal{W}_{i,p}\text{\ in place of }%
\mathcal{B}_{AP}^{1,p}(\mathbb{R}^{N})
\end{equation*}%
and 
\begin{equation*}
\mathcal{D}(\mathcal{K})=\{u\in L^{\infty }(\mathcal{K}):\partial _{\infty
}^{\alpha }u\in L^{\infty }(\mathcal{K})\ \forall \alpha \in \mathbb{N}^{N}\}%
\text{\ in that of }\mathcal{D}_{AP}(\mathbb{R}^{N})\text{.}
\end{equation*}%
Moreover the restriction of $\mathcal{G}_{1}$ to $\mathcal{B}_{AP}^{1,p}(%
\mathbb{R}^{N})$ is an isometric isomorphism of $\mathcal{B}_{AP}^{1,p}(%
\mathbb{R}^{N})$ onto $W^{1,p}(\mathcal{K})$; this is a consequence of [Part
(2) of] Lemma \ref{l3.1}.

Let $u\in \mathcal{D}_{i,p}$ ($p\geq 1$, $1\leq i\leq N$). Then the
inequality 
\begin{equation*}
\left\Vert s^{-1}(T(se_{i})u-u)-D_{i,p}u\right\Vert _{1}\leq c\left\Vert
s^{-1}(T(se_{i})u-u)-D_{i,p}u\right\Vert _{p}
\end{equation*}%
for a positive constant $c$ independent of $u$ and $s$ yields $%
D_{i,1}u=D_{i,p}u$, so that $D_{i,p}$ is the restriction to $\mathcal{B}%
_{AP}^{p}(\mathbb{R}^{N})$ of $D_{i,1}$. Thus for all $u\in \mathcal{D}%
_{i,\infty }$ we have $u\in \mathcal{D}_{i,p}$ ($p\geq 1$) and $D_{i,\infty
}u=D_{i,p}u$\ for any $1\leq i\leq N$. The next result will be useful in the
sequel.

\begin{lemma}[{\protect\cite[Lemma 2]{RSW}}]
\label{l3.2}We have $\mathcal{D}_{AP}(\mathbb{R}^{N})=\varrho (AP^{\infty }(%
\mathbb{R}^{N}))$.
\end{lemma}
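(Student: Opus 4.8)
The plan is to prove the two inclusions separately, the inclusion $\varrho(AP^{\infty}(\mathbb{R}^{N}))\subset\mathcal{D}_{AP}(\mathbb{R}^{N})$ being the elementary one. Given $u\in AP^{\infty}(\mathbb{R}^{N})$, boundedness yields $\sup_{p}\Vert u\Vert _{p}\le\Vert u\Vert _{\infty}<\infty$, so $u\in B_{AP}^{\infty}(\mathbb{R}^{N})$ and $\varrho(u)\in\mathcal{B}_{AP}^{\infty}(\mathbb{R}^{N})$. Since $\nabla u\in AP(\mathbb{R}^{N})^{N}$ is uniformly continuous, the difference quotients $s^{-1}(u(\cdot+se_{i})-u)$ converge to $\partial u/\partial y_{i}$ uniformly on $\mathbb{R}^{N}$ as $s\to0$, hence in $[\cdot]_{\infty}$; thus $\varrho(u)\in\mathcal{D}_{i,\infty}$ with $D_{i,\infty}\varrho(u)=\varrho(\partial u/\partial y_{i})$. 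As $\partial u/\partial y_{i}$ again belongs to $AP^{\infty}(\mathbb{R}^{N})$, iteration gives $D_{\infty}^{\alpha}\varrho(u)=\varrho(D_{y}^{\alpha}u)\in\mathcal{B}_{AP}^{\infty}(\mathbb{R}^{N})$ for every $\alpha$, i.e. $\varrho(u)\in\mathcal{D}_{AP}(\mathbb{R}^{N})$.

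For the reverse inclusion, let $u\in\mathcal{D}_{AP}(\mathbb{R}^{N})$ and choose $f\in B_{AP}^{\infty}(\mathbb{R}^{N})$ with $\varrho(f)=u$. Since $B_{AP}^{1}(\mathbb{R}^{N})$ embeds continuously into $\mathcal{B}_{AP}^{\prime }(\mathbb{R}^{N})$, every element of $B_{AP}^{\infty}(\mathbb{R}^{N})$ is an almost periodic distribution, so by Proposition \ref{p0.1}(iii) its convolution with any function in $\mathcal{D}(\mathbb{R}^{N})$ lies in $AP^{\infty}(\mathbb{R}^{N})$. Fixing a nonnegative mollifier $\phi$ with unit integral and support in the unit ball and setting $\phi_{\delta}(y)=\delta^{-N}\phi(y/\delta)$, we thus have $f\ast\phi_{\delta}\in AP^{\infty}(\mathbb{R}^{N})$ for every $\delta>0$. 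Identifying $\varrho(f\ast\phi_{\delta})$ with the Bochner integral $\int\phi_{\delta}(-z)T(z)u\,dz$ in $\mathcal{B}_{AP}^{p}(\mathbb{R}^{N})$ and using that the closed operators $D_{i,p}$ commute with the translation group (together with $u\in\mathcal{D}_{AP}$ and [Part (1) of] Lemma \ref{l3.1}), one obtains, for every multi-index $\beta$, that $D^{\beta}(f\ast\phi_{\delta})=g_{\beta}\ast\phi_{\delta}$ where $g_{\beta}\in B_{AP}^{\infty}(\mathbb{R}^{N})$ is a representative of $D_{\infty}^{\beta}u$. The key estimate is then the bound, uniform in $\delta$,
\begin{equation*}
\Vert g_{\beta}\ast\phi_{\delta}\Vert _{L^{\infty}(\mathbb{R}^{N})}=[g_{\beta}\ast\phi_{\delta}]_{\infty}\le[g_{\beta}]_{\infty},
\end{equation*}
the equality holding because $g_{\beta}\ast\phi_{\delta}\in AP(\mathbb{R}^{N})$, and the inequality because $\Vert g_{\beta}\ast\phi_{\delta}\Vert _{p}\le\Vert g_{\beta}\Vert _{p}$ by translation invariance of the Besicovitch seminorm together with Minkowski's inequality. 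Applying this with $\beta$ replaced by each $\beta+e_{i}$ gives $\sup_{\delta>0}\Vert \nabla(g_{\beta}\ast\phi_{\delta})\Vert _{\infty}\le L_{\beta}:=\max_{1\le i\le N}[g_{\beta+e_{i}}]_{\infty}<\infty$.

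Next, using $(g_{\beta}\ast\phi_{\delta})\ast\phi_{\delta^{\prime }}=(g_{\beta}\ast\phi_{\delta^{\prime }})\ast\phi_{\delta}$ and the elementary inequality $\Vert h-h\ast\phi_{\sigma}\Vert _{\infty}\le\sigma\Vert \nabla h\Vert _{\infty}$ (valid for $h\in AP^{\infty}(\mathbb{R}^{N})$), one gets $\Vert g_{\beta}\ast\phi_{\delta}-g_{\beta}\ast\phi_{\delta^{\prime }}\Vert _{\infty}\le L_{\beta}(\delta+\delta^{\prime })$; hence $(D^{\beta}(f\ast\phi_{\delta}))_{\delta>0}$ is Cauchy in the supremum norm for every $\beta$, so $(f\ast\phi_{\delta})_{\delta>0}$ converges in the Fréchet space $AP^{\infty}(\mathbb{R}^{N})$ to some $g\in AP^{\infty}(\mathbb{R}^{N})$. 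Since $\Vert \cdot\Vert _{p}\le\Vert \cdot\Vert _{\infty}$, we get $\varrho(f\ast\phi_{\delta})\to\varrho(g)$ in $\mathcal{B}_{AP}^{p}(\mathbb{R}^{N})$, while by strong continuity of the group $\{T(z)\}$ we also have $\varrho(f\ast\phi_{\delta})=\int\phi_{\delta}(-z)T(z)u\,dz\to u$ in $\mathcal{B}_{AP}^{p}(\mathbb{R}^{N})$. Therefore $u=\varrho(g)\in\varrho(AP^{\infty}(\mathbb{R}^{N}))$, which completes the proof.

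The crux — and the only place where the full hypothesis $u\in\mathcal{D}_{AP}(\mathbb{R}^{N})$ is really needed — is the passage from Besicovitch smoothness of $u$ to classical smoothness of a representative, i.e. showing that $(f\ast\phi_{\delta})_{\delta}$ and all of its derivatives converge \emph{uniformly} on $\mathbb{R}^{N}$, not merely in Besicovitch seminorm. This fails for a general element of $\mathcal{B}_{AP}^{\infty}(\mathbb{R}^{N})$ (already in the periodic case a bounded measurable function need not be continuous), and it is exactly the requirement that \emph{every} $D_{\infty}^{\beta}u$ lie in $\mathcal{B}_{AP}^{\infty}(\mathbb{R}^{N})$ that supplies the $\delta$-uniform gradient bounds via the contraction of convolution on the Besicovitch seminorm. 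The remaining verification of the commutation identity $D^{\beta}(f\ast\phi_{\delta})=g_{\beta}\ast\phi_{\delta}$ is routine, following from the fact that the generators $D_{i,p}$ are closed and commute both with the translations and with Bochner integration against $\phi_{\delta}$.
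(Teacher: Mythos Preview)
The paper does not prove this lemma at all; it merely records it with a citation to \cite[Lemma 2]{RSW}. Your argument, by contrast, supplies a complete and self-contained proof, and it is correct.

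A brief comment on strategy. The forward inclusion is indeed routine and follows exactly as you wrote from the uniform continuity of almost periodic derivatives together with (\ref{2.2''}). For the reverse inclusion your mollification approach is the natural one: the two ingredients that make it work are (a) Proposition~\ref{p0.1}(iii), which guarantees $f\ast\phi_\delta\in AP^\infty(\mathbb{R}^N)$ whenever $f\in B_{AP}^1(\mathbb{R}^N)\subset\mathcal{B}'_{AP}(\mathbb{R}^N)$, and (b) the identity $\Vert h\Vert_\infty=[h]_\infty$ for $h\in AP(\mathbb{R}^N)$, which you obtain from the isometric Gelfand transform onto $\mathcal{C}(\mathcal{K})$ and the fact that Haar measure on the compact group $\mathcal{K}$ has full support. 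Combining these with the contraction of the Bochner average $\int\phi_\delta(-z)T(z)\,dz$ on each $\mathcal{B}_{AP}^p$-norm gives the uniform-in-$\delta$ bound on every classical derivative of $f\ast\phi_\delta$, and the double-mollification trick then yields the Cauchy property in $AP^\infty(\mathbb{R}^N)$. Your final identification $u=\varrho(g)$ uses only strong continuity of $\{T(z)\}$ and that $\varrho$ is injective on $AP(\mathbb{R}^N)$ (immediate from the isometry of $\mathcal{G}$). All of this is sound.

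One small point worth making explicit for the reader: the equality $D_y^\beta(f\ast\phi_\delta)=g_\beta\ast\phi_\delta$ as genuine functions (not merely modulo $\mathcal{N}$) holds because both sides lie in $AP(\mathbb{R}^N)$, on which $\varrho$ is injective; this is implicit in your argument but could be stated once to avoid ambiguity about the choice of representative $g_\beta$.
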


From now on, we write $\widehat{u}$ either for $\mathcal{G}(u)$ if $u\in
B_{AP}^{p}(\mathbb{R}^{N})$ or for $\mathcal{G}_{1}(u)$ if $u\in \mathcal{B}%
_{AP}^{p}(\mathbb{R}^{N})$. The following properties are easily verified
(see once again either \cite[Chap. B1]{Vo-Khac} or \cite{Blot}).

\begin{proposition}
\label{p3.2}The following assertions hold.

\begin{itemize}
\item[(i)] $\int_{\mathcal{K}}\partial _{\infty }^{\alpha }\widehat{u}d\beta
=0$ for all $u\in \mathcal{D}_{AP}(\mathbb{R}^{N})$ and $\alpha \in \mathbb{N%
}^{N}$;

\item[(ii)] $\int_{\mathcal{K}}\partial _{i,p}\widehat{u}d\beta =0$ for all $%
u\in \mathcal{D}_{i,p}$ and $1\leq i\leq N$;

\item[(iii)] $D_{i,p}(u\phi )=uD_{i,\infty }\phi +\phi D_{i,p}u$ for all $%
(\phi ,u)\in \mathcal{D}_{AP}(\mathbb{R}^{N})\times \mathcal{D}_{i,p}$ and $%
1\leq i\leq N$.
\end{itemize}
\end{proposition}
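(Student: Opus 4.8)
The plan is to verify each of the three assertions directly from the semigroup-theoretic definitions, using Lemma~\ref{l3.2} to pass between $\mathcal{D}_{AP}(\mathbb{R}^{N})$ and $\varrho(AP^{\infty}(\mathbb{R}^{N}))$, so that all computations can be carried out with genuine almost periodic functions and their classical derivatives. First, for assertion (i) I would take $u\in\mathcal{D}_{AP}(\mathbb{R}^{N})$, so by Lemma~\ref{l3.2} we may write $u=\varrho(v)$ with $v\in AP^{\infty}(\mathbb{R}^{N})$; then $\partial_{\infty}^{\alpha}\widehat{u}=\widehat{D_{\infty}^{\alpha}u}=\mathcal{G}(D^{\alpha}v)$ by iterating Lemma~\ref{l3.1}(1)--(2). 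Since $D^{\alpha}v$ is itself a classical derivative of an element of $AP^{\infty}(\mathbb{R}^{N})$, it suffices to show $\mathfrak{M}(\partial v/\partial y_{i})=0$ for $v\in AP^{1}(\mathbb{R}^{N})$; this follows because $\mathfrak{M}(\partial v/\partial y_{i})=\lim_{s\to 0}s^{-1}\mathfrak{M}(v(\cdot+se_{i})-v)=0$ using the translation-invariance of the mean value (property (\textbf{2}) of the Besicovitch spaces), combined with the fact that $\mathfrak{M}$ is continuous on $B_{AP}^{1}(\mathbb{R}^{N})$ and $s^{-1}(T(se_i)v-v)\to D_{i,1}v$ in the $B^{1}_{AP}$-seminorm. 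Finally $\int_{\mathcal{K}}\partial_{\infty}^{\alpha}\widehat{u}\,d\beta=\mathfrak{M}(D_{\infty}^{\alpha}u)=0$ by Theorem~\ref{t2.1}.

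For assertion (ii) the strategy is a density argument. Fix $u\in\mathcal{D}_{i,p}$ and $1\le i\le N$. By Proposition~\ref{p2.1}, $\mathcal{D}_{AP}(\mathbb{R}^{N})$ is dense in $\mathcal{B}_{AP}^{p}(\mathbb{R}^{N})$; more to the point, the domain $\mathcal{D}_{i,p}$ of the generator $D_{i,p}$ contains $\mathcal{D}_{AP}(\mathbb{R}^{N})$ as a core (standard semigroup theory, \cite[Chap. VIII]{DS}), so there is a sequence $u_{n}\in\mathcal{D}_{AP}(\mathbb{R}^{N})$ with $u_{n}\to u$ and $D_{i,p}u_{n}\to D_{i,p}u$ in $\mathcal{B}_{AP}^{p}(\mathbb{R}^{N})$. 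Applying $\mathcal{G}_{1}$ (an isometry onto $L^{p}(\mathcal{K})$) and then integrating against $\beta$ — a continuous operation on $L^{p}(\mathcal{K})\hookrightarrow L^{1}(\mathcal{K})$ since $\mathcal{K}$ has finite measure — we get $\int_{\mathcal{K}}\partial_{i,p}\widehat{u}\,d\beta=\lim_{n}\int_{\mathcal{K}}\partial_{i,p}\widehat{u_{n}}\,d\beta$, and each term on the right vanishes by part (i). Hence the limit vanishes too.

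For assertion (iii), the Leibniz rule, I would again reduce to classical objects. Take $\phi\in\mathcal{D}_{AP}(\mathbb{R}^{N})$ and $u\in\mathcal{D}_{i,p}$. Using the approximating sequence $u_{n}=\varrho(v_{n})$ with $v_{n}\in AP^{\infty}(\mathbb{R}^{N})$, $u_{n}\to u$, $D_{i,p}u_{n}\to D_{i,p}u$, and writing $\phi=\varrho(\psi)$ with $\psi\in AP^{\infty}(\mathbb{R}^{N})$ (Lemma~\ref{l3.2}), the ordinary product rule gives $\partial(\psi v_{n})/\partial y_{i}=\psi\,\partial v_{n}/\partial y_{i}+v_{n}\,\partial\psi/\partial y_{i}$ pointwise; applying $\varrho$ and Lemma~\ref{l3.1}(1) yields $D_{i,p}(u_{n}\phi)=\phi D_{i,p}u_{n}+u_{n}D_{i,\infty}\phi$. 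Passing to the limit in $\mathcal{B}_{AP}^{p}(\mathbb{R}^{N})$ — here I use that multiplication by the bounded function $\phi$ (and boundedness of $D_{i,\infty}\phi$, since $\phi\in\mathcal{B}_{AP}^{\infty}$) is continuous on $\mathcal{B}_{AP}^{p}(\mathbb{R}^{N})$, and that $D_{i,p}$ is a closed operator so that $u_{n}\phi\to u\phi$ together with convergence of the right-hand side forces $u\phi\in\mathcal{D}_{i,p}$ with the claimed value — establishes (iii). The main obstacle is the justification that $\mathcal{D}_{AP}(\mathbb{R}^{N})$ is a core for each $D_{i,p}$ and that products behave well under the limiting procedure; this is exactly where closedness of the graph of $D_{i,p}$ (Proposition~\ref{p2.1}) and the strong continuity of the group $\{T(y)\}$ do the essential work, and it is the part that, while standard, must be stated carefully.
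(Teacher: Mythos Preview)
Your argument is correct. Note, however, that the paper does not actually supply a proof of this proposition: it simply states that ``the following properties are easily verified'' and refers the reader to \cite[Chap.~B1]{Vo-Khac} and \cite{Blot}. So there is no ``paper's own proof'' to compare against; your write-up is a legitimate way to fill in the details.

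One minor streamlining: for assertion (ii) you can bypass the core argument entirely and run the same translation-invariance computation you used in (i), but directly in $\mathcal{B}_{AP}^{p}(\mathbb{R}^{N})$. Indeed, for $u\in\mathcal{D}_{i,p}$ one has $s^{-1}(T(se_{i})u-u)\to D_{i,p}u$ in $\mathcal{B}_{AP}^{p}$, and since $\mathfrak{M}$ is continuous on $\mathcal{B}_{AP}^{p}$ and translation-invariant (property~(\textbf{2})), $\mathfrak{M}(D_{i,p}u)=\lim_{s\to 0}s^{-1}\big(\mathfrak{M}(T(se_{i})u)-\mathfrak{M}(u)\big)=0$. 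This yields (ii) without invoking that $\mathcal{D}_{AP}(\mathbb{R}^{N})$ is a core for $D_{i,p}$ (a fact which is true but not stated explicitly in the paper). Your density-plus-closedness route for (iii) is the natural one and is exactly what the references carry out.
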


From (iii) in Proposition \ref{p3.2} we have 
\begin{equation*}
\int_{\mathcal{K}}\widehat{\phi }\partial _{i,p}\widehat{u}d\beta =-\int_{%
\mathcal{K}}\widehat{u}\partial _{i,\infty }\widehat{\phi }d\beta \ \ \text{%
for all }(u,\phi )\in \mathcal{D}_{i,p}\times \mathcal{D}_{AP}(\mathbb{R}%
^{N}).
\end{equation*}%
This suggests us to define the notion of \textit{distributions} on $\mathcal{%
D}_{AP}(\mathbb{R}^{N})$ and of a weak derivative. First, let us endow $%
\mathcal{D}_{AP}(\mathbb{R}^{N})=\varrho (AP^{\infty }(\mathbb{R}^{N}))$
with its natural topology defined by the family of norms $%
N_{n}(u)=\sup_{\left\vert \alpha \right\vert \leq n}\sup_{y\in \mathbb{R}%
^{N}}\left\vert D_{\infty }^{\alpha }u(y)\right\vert $, integers $n\geq 0$.
So topologized, $\mathcal{D}_{AP}(\mathbb{R}^{N})$ is a Fr\'{e}chet space.
We denote by $\mathcal{D}_{AP}^{\prime }(\mathbb{R}^{N})$ the topological
dual of $\mathcal{D}_{AP}(\mathbb{R}^{N})$ and endow it with the strong dual
topology. The elements of $\mathcal{D}_{AP}^{\prime }(\mathbb{R}^{N})$ are
called the distributions on $\mathcal{D}_{AP}(\mathbb{R}^{N})$. There exists
a relationship between the space of almost periodic distributions $\mathcal{B%
}_{AP}^{\prime }(\mathbb{R}^{N})$ and $\mathcal{D}_{AP}^{\prime }(\mathbb{R}%
^{N})$. To be more precise, we have the following result which allows us to
view $\mathcal{B}_{AP}^{\prime }(\mathbb{R}^{N})$ as a proper subspace of $%
\mathcal{D}_{AP}^{\prime }(\mathbb{R}^{N})$.

\begin{proposition}
\label{p0.2}Given any $T\in \mathcal{B}_{AP}^{\prime }(\mathbb{R}^{N})$, the
linear form $\Phi (T):\mathcal{D}_{AP}(\mathbb{R}^{N})\rightarrow \mathbb{C}$
given by 
\begin{equation*}
\left\langle \Phi (T),\varrho (\varphi )\right\rangle =\mathfrak{M}(\varphi
T)\ \ (\varphi \in AP^{\infty }(\mathbb{R}^{N}))
\end{equation*}%
is a distribution on $\mathcal{D}_{AP}(\mathbb{R}^{N})$. The map $\Phi
:T\mapsto \Phi (T)$ is a linear continuous embedding of $\mathcal{B}%
_{AP}^{\prime }(\mathbb{R}^{N})$ into $\mathcal{D}_{AP}^{\prime }(\mathbb{R}%
^{N})$.
\end{proposition}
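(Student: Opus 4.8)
The plan is to check, for a fixed $T\in\mathcal{B}_{AP}^{\prime}(\mathbb{R}^{N})$, that $\varrho(\varphi)\mapsto\mathfrak{M}(\varphi T)$ is an unambiguously defined continuous linear form on $\mathcal{D}_{AP}(\mathbb{R}^{N})$ — so that $\Phi(T)\in\mathcal{D}_{AP}^{\prime}(\mathbb{R}^{N})$ — and then that $T\mapsto\Phi(T)$ is linear, continuous and one-to-one. Independence of the choice of $\varphi$ is immediate: by Lemma \ref{l3.2} the canonical map $\varrho$ carries $AP^{\infty}(\mathbb{R}^{N})$ onto $\mathcal{D}_{AP}(\mathbb{R}^{N})$, and it is injective on $AP(\mathbb{R}^{N})$ since a nonnegative continuous almost periodic function with vanishing mean value must vanish (its Gelfand transform is a nonnegative element of $\mathcal{C}(\mathcal{K})$ with zero $\beta$-integral, and $\beta$ has full support on $\mathcal{K}$); hence every element of $\mathcal{D}_{AP}(\mathbb{R}^{N})$ is $\varrho(\varphi)$ for exactly one $\varphi\in AP^{\infty}(\mathbb{R}^{N})$. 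Linearity in $\varphi$ is clear, and $\mathfrak{M}(\varphi T)$ is meaningful because $\varphi T\in\mathcal{B}_{AP}^{\prime}(\mathbb{R}^{N})$ (remark following Proposition \ref{p0.1}) while $\mathfrak{M}$ is defined and continuous on $\mathcal{B}_{AP}^{\prime}(\mathbb{R}^{N})$ by Lemma \ref{l0.1}.

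To prove continuity of $\Phi(T)$ on $\mathcal{D}_{AP}(\mathbb{R}^{N})$ I would first note that $\mathfrak{M}$ annihilates derivatives: for $u\in AP^{\infty}(\mathbb{R}^{N})$ one has $\mathfrak{M}(D^{\alpha}u)=\int_{\mathcal{K}}\partial_{\infty}^{\alpha}\widehat{\varrho(u)}\,d\beta=0$ by Lemma \ref{l3.1} and Proposition \ref{p3.2}(i), and since $AP^{\infty}(\mathbb{R}^{N})$ is dense in $\mathcal{B}_{AP}^{\prime}(\mathbb{R}^{N})$ with $\mathfrak{M}$ and the distributional derivatives continuous there, it follows that $\mathfrak{M}(\partial_{i}S)=0$ for every $S\in\mathcal{B}_{AP}^{\prime}(\mathbb{R}^{N})$ and every $i$. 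Writing $T=\sum_{|\alpha|\le m}D^{\alpha}g_{\alpha}$ with $g_{\alpha}\in AP(\mathbb{R}^{N})$ (Proposition \ref{p0.1}(ii)) and iterating the Leibniz rule $\partial_{i}(\psi S)=(\partial_{i}\psi)S+\psi\,\partial_{i}S$ — valid in $\mathcal{B}_{AP}^{\prime}(\mathbb{R}^{N})$ for $\psi\in AP^{\infty}(\mathbb{R}^{N})$, the distributional counterpart of Proposition \ref{p3.2}(iii) — together with $\mathfrak{M}\circ\partial_{i}=0$, one obtains the integration-by-parts identity
\begin{equation*}
\mathfrak{M}(\varphi T)=\sum_{|\alpha|\le m}(-1)^{|\alpha|}\,\mathfrak{M}\!\left(g_{\alpha}\,D^{\alpha}\varphi\right)\qquad(\varphi\in AP^{\infty}(\mathbb{R}^{N})).
\end{equation*}
Since $g_{\alpha}D^{\alpha}\varphi\in AP(\mathbb{R}^{N})$, we have $|\mathfrak{M}(g_{\alpha}D^{\alpha}\varphi)|\le\|g_{\alpha}\|_{\infty}\|D^{\alpha}\varphi\|_{\infty}$, so that
\begin{equation*}
\left|\left\langle\Phi(T),\varrho(\varphi)\right\rangle\right|\le\Big(\sum_{|\alpha|\le m}\|g_{\alpha}\|_{\infty}\Big)\,N_{m}(\varrho(\varphi)),
\end{equation*}
which is exactly the continuity of $\Phi(T)$ on the Fréchet space $\mathcal{D}_{AP}(\mathbb{R}^{N})$; hence $\Phi(T)\in\mathcal{D}_{AP}^{\prime}(\mathbb{R}^{N})$.

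It remains to treat the map $\Phi$ itself. Linearity is read off the defining formula. For continuity, each functional $T\mapsto\langle\Phi(T),\varrho(\varphi)\rangle=\mathfrak{M}(\varphi T)$ is continuous on $\mathcal{B}_{AP}^{\prime}(\mathbb{R}^{N})$, being the composition of the continuous multiplication $S\mapsto\varphi S$ with the continuous form $\mathfrak{M}$; combined with the bound of the previous paragraph, which makes the family $\{T\mapsto\mathfrak{M}(\varphi T):\varrho(\varphi)\in B\}$ uniformly bounded on each bounded set $B\subset\mathcal{D}_{AP}(\mathbb{R}^{N})$, this yields continuity of $\Phi$ for the strong dual topologies. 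Injectivity is the cleanest point: if $\Phi(T)=0$ then, taking $\varphi=\overline{\gamma}_{k}$ (a trigonometric monomial, hence in $AP^{\infty}(\mathbb{R}^{N})$), we get $\mathfrak{M}(\overline{\gamma}_{k}T)=0$ for every $k\in\mathbb{R}^{N}$, in particular for every $k\in\mathrm{Sp}(T)$, whence $T=0$ by the result of \cite[p.~208]{LS} recalled above.

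The step I expect to demand the most care is the integration-by-parts identity: one has to make both the Leibniz product rule and the vanishing of $\mathfrak{M}$ on derivatives rigorous at the level of almost periodic distributions, where $g_{\alpha}\in AP(\mathbb{R}^{N})$ need not be differentiable so that $D^{\alpha}g_{\alpha}$ is only a distribution. A lesser technical point is upgrading the continuity of $\Phi$ from the weak-$\ast$ to the strong dual topology; the remainder is routine assembly of the tools collected in Section 3.
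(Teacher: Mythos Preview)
Your proof is correct and your injectivity argument coincides with the paper's, but your route to the continuity of $\Phi(T)$ is genuinely different. The paper argues sequentially: if $\varrho(\varphi_{j})\rightarrow 0$ in $\mathcal{D}_{AP}(\mathbb{R}^{N})$ then, via the Gelfand isometry (Theorem~\ref{t2.1}), $\varphi_{j}\rightarrow 0$ in $AP^{\infty}(\mathbb{R}^{N})$, hence in $\mathcal{B}_{AP}^{\prime}(\mathbb{R}^{N})$; since multiplication by $T$ and the extended mean value $\mathfrak{M}$ (Lemma~\ref{l0.1}) are both continuous on $\mathcal{B}_{AP}^{\prime}(\mathbb{R}^{N})$, one gets $\mathfrak{M}(\varphi_{j}T)\rightarrow 0$ directly. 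You instead exploit the structure theorem (Proposition~\ref{p0.1}(ii)), write $T=\sum_{|\alpha|\le m}D^{\alpha}g_{\alpha}$ with $g_{\alpha}\in AP(\mathbb{R}^{N})$, establish $\mathfrak{M}\circ\partial_{i}=0$ on $\mathcal{B}_{AP}^{\prime}(\mathbb{R}^{N})$ by density, integrate by parts, and obtain the explicit bound $|\langle\Phi(T),\varrho(\varphi)\rangle|\le(\sum\|g_{\alpha}\|_{\infty})\,N_{m}(\varrho(\varphi))$. The paper's argument is shorter and avoids the Leibniz/integration-by-parts bookkeeping you rightly flag as the delicate point; your argument buys a quantitative seminorm estimate and makes the well-definedness of $\varrho(\varphi)\mapsto\mathfrak{M}(\varphi T)$ explicit (injectivity of $\varrho$ on $AP(\mathbb{R}^{N})$), which the paper leaves implicit. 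On the continuity of $\Phi$ itself both treatments are terse; your honest remark that passing from weak-$\ast$ to strong dual continuity needs care is fair, and the paper's one-line appeal to Lemma~\ref{l0.1} is no more detailed.
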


\begin{proof}
Let $(\varrho (\varphi _{j}))_{j}$ be a sequence of functions in $\mathcal{D}%
_{AP}(\mathbb{R}^{N})$ such that $\varrho (\varphi _{j})\rightarrow 0$ in $%
\mathcal{D}_{AP}(\mathbb{R}^{N})$; then $\varphi _{j}\rightarrow 0$ in $%
AP^{\infty }(\mathbb{R}^{N})$. In fact assuming $\varrho (\varphi
_{j})\rightarrow 0$ in $\mathcal{D}_{AP}(\mathbb{R}^{N})$ leads to $\mathcal{%
G}_{1}(\varrho (D_{y}^{\alpha }\varphi _{j}))\equiv \mathcal{G}%
(D_{y}^{\alpha }\varphi _{j})\rightarrow 0$ in $\mathcal{C}(\mathcal{K})$
for any $\alpha \in \mathbb{N}^{N}$, so that, as $\left\Vert D_{y}^{\alpha
}\varphi _{j}\right\Vert _{\infty }=\left\Vert \mathcal{G}(D_{y}^{\alpha
}\varphi _{j})\right\Vert _{\infty }$ (see Theorem \ref{t2.1}) our claim is
justified. It follows that $\varphi _{j}\rightarrow 0$ in $\mathcal{B}%
_{AP}^{\prime }(\mathbb{R}^{N})$, hence $\varphi _{j}T\rightarrow 0$ in $%
\mathcal{B}_{AP}^{\prime }(\mathbb{R}^{N})$. We deduce from Lemma \ref{l0.1}
that $\mathfrak{M}(\varphi _{j}T)\rightarrow 0$, from which the continuity
of $\Phi (T)$ on $\mathcal{D}_{AP}(\mathbb{R}^{N})$. Moreover, if $\Phi
(T)=0 $ then $\mathfrak{M}(\varphi T)=0$ for all $\varphi \in AP^{\infty }(%
\mathbb{R}^{N})$, hence $\mathfrak{M}(\overline{\gamma }_{k}T)=0$ for any $%
k\in \mathbb{R}^{N}$. This yields $T=0$ (see \cite[p.208]{LS}). $\Phi $ is
then injective. It remains to check that $\Phi $ is continuous. But this
follows at once by the continuity of the mean value $\mathfrak{M}$ on $%
\mathcal{B}_{AP}^{\prime }(\mathbb{R}^{N})$ (see Lemma \ref{l0.1}).
\end{proof}

\begin{remark}
\label{r0.1}\emph{The map }$\Phi $\emph{\ defined above is not surjective
since }$\mathcal{B}_{AP}^{\prime }(\mathbb{R}^{N})$\emph{\ is not the
topological dual of} $AP^{\infty }(\mathbb{R}^{N})$\emph{\ (see \cite[p.207]%
{LS}). Since it is injective and continuous, }$\mathcal{B}_{AP}^{\prime }(%
\mathbb{R}^{N})$\emph{\ can be viewed as a proper subspace of} $\mathcal{D}%
_{AP}^{\prime }(\mathbb{R}^{N})$\emph{.}
\end{remark}

The weak derivative of $u\in \mathcal{D}_{AP}^{\prime }(\mathbb{R}^{N})$ is
defined as follows: for any $\alpha \in \mathbb{N}^{N}$, $D^{\alpha }u$
stands for the distribution defined by the formula 
\begin{equation*}
\left\langle D^{\alpha }u,\phi \right\rangle =(-1)^{\left\vert \alpha
\right\vert }\left\langle u,D_{\infty }^{\alpha }\phi \right\rangle \text{\
for all }\phi \in \mathcal{D}_{AP}(\mathbb{R}^{N}).
\end{equation*}%
The space $\mathcal{D}_{AP}(\mathbb{R}^{N})$ being dense in $\mathcal{B}%
_{AP}^{p}(\mathbb{R}^{N})$ ($1\leq p<\infty $), it follows that $\mathcal{B}%
_{AP}^{p}(\mathbb{R}^{N})\subset \mathcal{D}_{AP}^{\prime }(\mathbb{R}^{N})$
with continuous embedding, so that the weak derivative of any $f\in \mathcal{%
B}_{AP}^{p}(\mathbb{R}^{N})$ is well defined and verifies the following
functional equation: 
\begin{equation*}
\left\langle D^{\alpha }f,\phi \right\rangle =(-1)^{\left\vert \alpha
\right\vert }\int_{\mathcal{K}}\widehat{f}\partial _{\infty }^{\alpha }%
\widehat{\phi }d\beta \text{\ for all }\phi \in \mathcal{D}_{AP}(\mathbb{R}%
^{N}).
\end{equation*}%
As a special case, for $f\in \mathcal{D}_{i,p}$ we have 
\begin{equation*}
-\int_{\mathcal{K}}\widehat{f}\partial _{i,p}\widehat{\phi }d\beta =\int_{%
\mathcal{K}}\widehat{\phi }\partial _{i,p}\widehat{f}d\beta \ \ \forall \phi
\in \mathcal{D}_{AP}(\mathbb{R}^{N}).
\end{equation*}%
Hence we may identify $D_{i,p}f$ with $D^{\alpha _{i}}f$, $\alpha
_{i}=(\delta _{ij})_{1\leq j\leq N}$. Conversely, if $f\in \mathcal{B}%
_{AP}^{p}(\mathbb{R}^{N})$ is such that there exists $f_{i}\in \mathcal{B}%
_{AP}^{p}(\mathbb{R}^{N})$ with $\left\langle D^{\alpha _{i}}f,\phi
\right\rangle =-\int_{\mathcal{K}}\widehat{f}_{i}\widehat{\phi }d\beta $ for
all $\phi \in \mathcal{D}_{AP}(\mathbb{R}^{N})$, then $f\in \mathcal{D}%
_{i,p} $ and $D_{i,p}f=f_{i}$. This allows us to justify the fact that $%
\mathcal{B}_{AP}^{1,p}(\mathbb{R}^{N})$ is a Banach space under the norm $%
\left\Vert \cdot \right\Vert _{\mathcal{B}_{AP}^{1,p}(\mathbb{R}^{N})}$. The
same is true for $W^{1,p}(\mathcal{K})$. Moreover $\mathcal{D}_{AP}(\mathbb{R%
}^{N})$ (resp. $\mathcal{D}(\mathcal{K})$) is a dense subspace of $\mathcal{B%
}_{AP}^{1,p}(\mathbb{R}^{N})$ (resp. $W^{1,p}(\mathcal{K})$).

We now define the appropriate space of correctors. For that, let $\rho \in
B_{AP}^{2}(\mathbb{R}^{N})\cap L^{\infty }(\mathbb{R}^{N})$ be freely fixed
with $\mathfrak{M}(\rho )>0$ ($\mathfrak{M}(\rho )$ the mean value of $\rho $%
). We begin by defining the following space: 
\begin{equation*}
\mathcal{B}_{AP,\rho }^{1,p}(\mathbb{R}^{N})=\{u\in \mathcal{B}_{AP}^{1,p}(%
\mathbb{R}^{N}):\mathfrak{M}(\rho u)=0\}.
\end{equation*}%
We endow it with the seminorm 
\begin{equation*}
\left\Vert u\right\Vert _{\#,p}=\left( \sum_{i=1}^{N}\left\Vert
D_{i,p}u\right\Vert _{p}^{p}\right) ^{1/p}\ \ (u\in \mathcal{B}_{AP,\rho
}^{1,p}(\mathbb{R}^{N})).
\end{equation*}%
Since $\mathfrak{M}(\rho )\neq 0$ it follows that $\left\Vert \cdot
\right\Vert _{\#,p}$ is actually a norm on $\mathcal{B}_{AP,\rho }^{1,p}(%
\mathbb{R}^{N})$. Under this norm $\mathcal{B}_{AP,\rho }^{1,p}(\mathbb{R}%
^{N})$ is unfortunately not complete. We denote by $\mathcal{B}_{\#AP}^{1,p}(%
\mathbb{R}^{N})$ its completion with respect to the above norm and by $J_{p}$
the canonical embedding of $\mathcal{B}_{AP,\rho }^{1,p}(\mathbb{R}^{N})$
into $\mathcal{B}_{\#AP}^{1,p}(\mathbb{R}^{N})$. It can be easily checked
that $\mathcal{D}_{AP,\rho }(\mathbb{R}^{N})=\{u\in \mathcal{D}_{AP}(\mathbb{%
R}^{N}):\mathfrak{M}(\rho u)=0\}$ is dense in $\mathcal{B}_{AP,\rho }^{1,p}(%
\mathbb{R}^{N})$. It holds that:

\begin{itemize}
\item[(P$_{1}$)] The gradient operator $D_{p}=(D_{1,p},...,D_{N,p}):\mathcal{%
B}_{AP,\rho }^{1,p}(\mathbb{R}^{N})\rightarrow (\mathcal{B}_{AP}^{p}(\mathbb{%
R}^{N}))^{N}$ extends by continuity to a unique mapping $\overline{D}_{p}:%
\mathcal{B}_{\#AP}^{1,p}(\mathbb{R}^{N})\rightarrow (\mathcal{B}_{AP}^{p}(%
\mathbb{R}^{N}))^{N}$ with the properties 
\begin{equation*}
D_{i,p}=\overline{D}_{i,p}\circ J_{p}
\end{equation*}%
and 
\begin{equation*}
\left\Vert u\right\Vert _{\#,p}=\left( \sum_{i=1}^{N}\left\Vert \overline{D}%
_{i,p}u\right\Vert _{p}^{p}\right) ^{1/p}\ \ \text{for }u\in \mathcal{B}%
_{\#AP}^{1,p}(\mathbb{R}^{N}).
\end{equation*}

\item[(P$_{2}$)] The space $J_{p}(\mathcal{B}_{AP,\rho }^{1,p}(\mathbb{R}%
^{N}))$ (and hence $J_{p}(\mathcal{D}_{AP,\rho }(\mathbb{R}^{N}))$) is dense
in $\mathcal{B}_{\#AP}^{1,p}(\mathbb{R}^{N})$.
\end{itemize}

\noindent Moreover the mapping $\overline{D}_{p}$ is an isometric embedding
of $\mathcal{B}_{\#AP}^{1,p}(\mathbb{R}^{N})$ onto a closed subspace of $(%
\mathcal{B}_{AP}^{p}(\mathbb{R}^{N}))^{N}$, so that $\mathcal{B}%
_{\#AP}^{1,p}(\mathbb{R}^{N})$ is a reflexive Banach space. By duality we
define the divergence operator div$_{p^{\prime }}:(\mathcal{B}_{AP}^{p}(%
\mathbb{R}^{N}))^{N}\rightarrow (\mathcal{B}_{\#AP}^{1,p}(\mathbb{R}%
^{N}))^{\prime }$ ($p^{\prime }=p/(p-1)$) by 
\begin{equation}
\left\langle \text{div}_{p^{\prime }}u,v\right\rangle =-\left\langle u,%
\overline{D}_{p}v\right\rangle \text{\ for }v\in \mathcal{B}_{\#AP}^{1,p}(%
\mathbb{R}^{N})\text{ and }u=(u_{i})\in (\mathcal{B}_{AP}^{p^{\prime }}(%
\mathbb{R}^{N}))^{N}\text{,}  \label{2.6''}
\end{equation}%
where $\left\langle u,\overline{D}_{p}v\right\rangle =\sum_{i=1}^{N}\int_{%
\mathcal{K}}\widehat{u}_{i}\partial _{i,p}\widehat{v}d\beta $. The just
defined div$_{p^{\prime }}$ operator extends the natural divergence operator
defined in $\mathcal{D}_{AP}(\mathbb{R}^{N})$ since $D_{i,p}f=\overline{D}%
_{i,p}(J_{p}f)$ for all $f\in \mathcal{D}_{AP}(\mathbb{R}^{N})$.

Now taking in (\ref{2.6''}) $u=D_{p^{\prime }}w$ with $w\in \mathcal{B}%
_{AP}^{p^{\prime }}(\mathbb{R}^{N})$ being such that $D_{p^{\prime }}w\in (%
\mathcal{B}_{AP}^{p^{\prime }}(\mathbb{R}^{N}))^{N}$, we define the
Laplacian operator on $\mathcal{B}_{AP}^{p^{\prime }}(\mathbb{R}^{N})$
(denoted here by $\Delta _{p^{\prime }}$) as follows: 
\begin{equation}
\left\langle \Delta _{p^{\prime }}w,v\right\rangle =\left\langle \text{div}%
_{p^{\prime }}(D_{p^{\prime }}w),v\right\rangle =-\left\langle D_{p^{\prime
}}w,\overline{D}_{p}v\right\rangle \text{\ for all }v\in \mathcal{B}%
_{\#AP}^{1,p}(\mathbb{R}^{N}).  \label{2.7''}
\end{equation}%
If in addition $v=J_{p}(\phi )$ with $\phi \in \mathcal{D}_{AP}(\mathbb{R}%
^{N})/\mathbb{C}$ then $\left\langle \Delta _{p^{\prime }}w,J_{p}(\phi
)\right\rangle =-\left\langle D_{p^{\prime }}w,D_{p}\phi \right\rangle $, so
that, for $p=2$, we get 
\begin{equation*}
\left\langle \Delta _{2}w,J_{2}(\phi )\right\rangle =\left\langle w,\Delta
_{2}\phi \right\rangle \text{\ for all }w\in \mathcal{B}_{AP}^{2}(\mathbb{R}%
^{N})\text{ and }\phi \in \mathcal{D}_{AP}(\mathbb{R}^{N})/\mathbb{C}\text{.}
\end{equation*}

The following result is also immediate.

\begin{proposition}
\label{p3.5}For $u\in AP^{\infty }(\mathbb{R}^{N})$ we have 
\begin{equation*}
\Delta _{p}\varrho (u)=\varrho (\Delta _{y}u)
\end{equation*}%
where $\Delta _{y}$ stands for the usual Laplacian operator on $\mathbb{R}%
_{y}^{N}$.
\end{proposition}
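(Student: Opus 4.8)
The plan is to reduce the claimed identity $\Delta_p \varrho(u) = \varrho(\Delta_y u)$ for $u \in AP^\infty(\mathbb{R}^N)$ to the first-order fact already recorded in Lemma \ref{l3.1}(1), namely that $D_{i,p}\varrho(u) = \varrho(\partial u/\partial y_i)$ whenever $u \in AP^1(\mathbb{R}^N)$. Since $u \in AP^\infty(\mathbb{R}^N)$, all of its partial derivatives again lie in $AP^\infty(\mathbb{R}^N) \subset AP^1(\mathbb{R}^N)$, so Lemma \ref{l3.1}(1) may be applied repeatedly. Concretely, first I would observe that $\varrho(u) \in \mathcal{D}_{i,p}$ with $D_{i,p}\varrho(u) = \varrho(\partial u/\partial y_i)$; since $\partial u/\partial y_i \in AP^\infty(\mathbb{R}^N)$, a second application gives $D_{i,p}\bigl(\varrho(\partial u/\partial y_i)\bigr) = \varrho(\partial^2 u/\partial y_i^2)$. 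Summing over $i = 1, \dots, N$ and using that $\Delta_p$ is, on the relevant domain, simply $\sum_{i=1}^N D_{i,p} \circ D_{i,p}$ acting via the definition \eqref{2.7''}, we obtain $\Delta_p \varrho(u) = \sum_{i=1}^N \varrho(\partial^2 u / \partial y_i^2) = \varrho(\Delta_y u)$, where the last equality uses linearity of $\varrho$.

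The one point that needs a little care is matching the abstract definition of $\Delta_{p}$ given in \eqref{2.7''} — which is introduced by duality, as $\langle \Delta_{p} w, v\rangle = -\langle D_{p}w, \overline{D}_{p'}v\rangle$ for $v \in \mathcal{B}_{\#AP}^{1,p'}(\mathbb{R}^N)$ — with the pointwise/iterated-derivative description. So the key step is to verify that for $w = \varrho(u)$ with $u \in AP^\infty(\mathbb{R}^N)$, the element $\varrho(\Delta_y u) \in \mathcal{B}_{AP}^p(\mathbb{R}^N)$ represents the distribution $\Delta_p \varrho(u)$; this amounts to checking the integration-by-parts identity
\begin{equation*}
-\int_{\mathcal{K}} \widehat{D_p \varrho(u)} \cdot \partial_{p'} \widehat{v}\, d\beta = \int_{\mathcal{K}} \widehat{\varrho(\Delta_y u)}\, \widehat{v}\, d\beta \quad \text{for all } v \in \mathcal{D}_{AP}(\mathbb{R}^N),
\end{equation*}
which follows by applying Proposition \ref{p3.2}(iii) (the product/Leibniz rule) componentwise together with Proposition \ref{p3.2}(ii) (vanishing mean of $D_{i,p}$ of an element of $\mathcal{D}_{i,p}$), exactly as in the derivation of the weak-derivative formula displayed just before the space of correctors is introduced. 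Then density of $\mathcal{D}_{AP}(\mathbb{R}^N)$ in $\mathcal{B}_{\#AP}^{1,p'}(\mathbb{R}^N)$ promotes the identity from test functions $v$ to all admissible $v$.

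The main obstacle — really the only nontrivial bookkeeping — is keeping straight the several layers of identification: the canonical map $\varrho$, the Gelfand isomorphism $\mathcal{G}_1$, the distinction between $D_{i,p}$ on $\mathcal{D}_{i,p}$ and its continuous extension $\overline{D}_{i,p}$ on $\mathcal{B}_{\#AP}^{1,p}(\mathbb{R}^N)$, and the fact that, by the remark preceding Lemma \ref{l3.2}, $D_{i,\infty}u = D_{i,p}u$ for $u \in \mathcal{D}_{i,\infty}$, which guarantees the value of $D_{i,p}\varrho(u)$ does not depend on which $p$ one works with. Once one grants Lemma \ref{l3.1}(1) and Proposition \ref{p3.2}, the computation is a one-line iteration plus a routine duality check, so the proof is short.
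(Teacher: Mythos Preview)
Your proof is correct and is precisely the natural unpacking of why the paper labels this result ``immediate'' and offers no proof: iterate Lemma~\ref{l3.1}(1) to get $D_{i,p}D_{i,p}\varrho(u)=\varrho(\partial^2 u/\partial y_i^2)$, then match this against the duality definition~\eqref{2.7''} via the integration-by-parts identities in Proposition~\ref{p3.2}. There is nothing to add.
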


We end this subsection with some notations. Let $f\in \mathcal{B}_{AP}^{p}(%
\mathbb{R}^{N})$. We know that $D^{\alpha _{i}}f$ exists (in the sense of
distributions) and that $D^{\alpha _{i}}f=D_{i,p}f$ if $f\in \mathcal{D}%
_{i,p}$. So we can drop the subscript $p$ and henceforth denote $D_{i,p}$
(resp. $\partial _{i,p}$) by $\overline{\partial }/\partial y_{i}$ (resp. $%
\partial _{i}$). Thus, $\overline{D}_{y}$ will stand for the gradient
operator $(\overline{\partial }/\partial y_{i})_{1\leq i\leq N}$ and $%
\overline{\text{div}}_{y}$ for the divergence operator div$_{p}$. We will
also denote the operator $\overline{D}_{i,p}$ by $\overline{\partial }%
/\partial y_{i}$. Since $J_{p}$ is an embedding, this allows us to view $%
\mathcal{B}_{AP,\rho }^{1,p}(\mathbb{R}^{N})$ (and hence $\mathcal{D}%
_{AP,\rho }(\mathbb{R}^{N})$) as a dense subspace of $\mathcal{B}%
_{\#AP}^{1,p}(\mathbb{R}^{N})$. $D_{i,p}$ will therefore be seen as the
restriction of $\overline{D}_{i,p}$ to $\mathcal{B}_{AP,\rho }^{1,p}(\mathbb{%
R}^{N})$. Thus we will henceforth omit $J_{p}$ in the notation if it is
understood from the context and there is no risk of confusion. This will
lead to the notation $\overline{D}_{p}=\overline{D}_{y}=(\overline{\partial }%
/\partial y_{i})_{1\leq i\leq N}$ and $\partial _{p}=\partial =(\partial
_{i})_{1\leq i\leq N}$. Finally, we will denote the Laplacian operator on $%
\mathcal{B}_{AP}^{p}(\mathbb{R}^{N})$ by $\overline{\Delta }_{y}$.

\subsection{The $\Sigma $-convergence}

Let $A_{y}=AP(\mathbb{R}_{y}^{N})$ and $A_{\tau }=AP(\mathbb{R}_{\tau })$.
We know that $A=AP(\mathbb{R}_{y,\tau }^{N+1})$ is the closure in $\mathcal{B%
}(\mathbb{R}_{y,\tau }^{N+1})$ of the tensor product $A_{y}\otimes A_{\tau }$%
. We denote by $\mathcal{K}_{y}$ (resp. $\mathcal{K}_{\tau }$, $\mathcal{K}$%
) the spectrum of $A_{y}$ (resp. $A_{\tau }$, $A$). The same letter $%
\mathcal{G}$ will denote the Gelfand transformation on $A_{y}$, $A_{\tau }$
and $A$, as well. Points in $\mathcal{K}_{y}$ (resp. $\mathcal{K}_{\tau }$)
are denoted by $s$ (resp. $s_{0}$). The Haar measure on the compact group $%
\mathcal{K}_{y}$ (resp. $\mathcal{K}_{\tau }$) is denoted by $\beta _{y}$
(resp. $\beta _{\tau }$). We have $\mathcal{K}=\mathcal{K}_{y}\times 
\mathcal{K}_{\tau }$ (Cartesian product) and the Haar measure on $\mathcal{K}
$ is precisely the product measure $\beta =\beta _{y}\otimes \beta _{\tau }$%
; the last equality follows in an obvious way by the density of $%
A_{y}\otimes A_{\tau }$ in $A$ and by the Fubini's theorem. Finally, the
letter $E$ will throughout denote $(\varepsilon _{n})_{n\in \mathbb{N}}$
with $0<\varepsilon _{n}\leq 1$ and such that $\varepsilon _{n}\rightarrow 0$
as $n\rightarrow \infty $. In what follows, we use the same notation as in
the preceding section.

\begin{definition}
\label{d3.1}\emph{A sequence }$(u_{\varepsilon })_{\varepsilon >0}\subset
L^{p}(Q_{T})$\emph{\ (}$1\leq p<\infty $\emph{) is said to }weakly $\Sigma $%
-converge\emph{\ in }$L^{p}(Q_{T})$\emph{\ to some }$u_{0}\in L^{p}(Q_{T};%
\mathcal{B}_{AP}^{p}(\mathbb{R}_{y,\tau }^{N+1}))$\emph{\ if as }$%
\varepsilon \rightarrow 0$\emph{, we have } 
\begin{equation}
\begin{array}{l}
\int_{Q_{T}}u_{\varepsilon }(x,t)f\left( x,t,\frac{x}{\varepsilon },\frac{t}{%
\varepsilon ^{2}}\right) dxdt \\ 
\ \ \ \ \ \ \ \ \ \ \ \ \ \ \rightarrow \iint_{Q_{T}\times \mathcal{K}}%
\widehat{u}_{0}(x,t,s,s_{0})\widehat{f}(x,t,s,s_{0})dxdtd\beta%
\end{array}
\label{3.1}
\end{equation}%
\emph{for every }$f\in L^{p^{\prime }}(Q_{T};A)$\emph{\ (}$1/p^{\prime
}=1-1/p$\emph{), where }$\widehat{u}_{0}=\mathcal{G}_{1}\circ u_{0}$\emph{\
and }$\widehat{f}=\mathcal{G}_{1}\circ (\varrho \circ f)=\mathcal{G}\circ f$%
\emph{. We express this by writing} $u_{\varepsilon }\rightarrow u_{0}$ in $%
L^{p}(Q_{T})$-weak $\Sigma $.
\end{definition}

We recall some important results whose proofs can be founded in \cite[%
Theorems 3.1 and 3.5]{CMP} (see also \cite[Theorems 3.1 and 3.6]{NA}).

\begin{theorem}
\label{t3.1}Let $1<p<\infty $. Let $(u_{\varepsilon })_{\varepsilon \in E}$
be a bounded sequence in $L^{p}(Q_{T})$. Then there exists a subsequence $%
E^{\prime }$ from $E$ such that the sequence $(u_{\varepsilon
})_{\varepsilon \in E^{\prime }}$ is weakly $\Sigma $-convergent in $%
L^{p}(Q_{T})$.
\end{theorem}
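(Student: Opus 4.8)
The plan is to prove Theorem \ref{t3.1} by a classical two-step argument: first reduce the weak $\Sigma$-convergence statement to the existence of a weak-$\ast$ limit of a family of bounded linear functionals on $L^{p'}(Q_T;A)$, and then identify this limit with an element of $L^p(Q_T;\mathcal{B}_{AP}^p(\mathbb{R}_{y,\tau}^{N+1}))$ by appealing to duality and the isometric isomorphism $\mathcal{G}_1$ between $\mathcal{B}_{AP}^p(\mathbb{R}^{N+1})$ and $L^p(\mathcal{K})$.

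First I would fix the bounded sequence $(u_\varepsilon)_{\varepsilon\in E}$ in $L^p(Q_T)$, say $\|u_\varepsilon\|_{L^p(Q_T)}\le C$, and for each $\varepsilon$ define the linear functional
\begin{equation*}
L_\varepsilon(f)=\int_{Q_T}u_\varepsilon(x,t)\,f\!\left(x,t,\tfrac{x}{\varepsilon},\tfrac{t}{\varepsilon^2}\right)dx\,dt,\qquad f\in L^{p'}(Q_T;A).
\end{equation*}
The key estimate is that $|L_\varepsilon(f)|\le C\,\|f(\cdot,\cdot,\tfrac{\cdot}{\varepsilon},\tfrac{\cdot}{\varepsilon^2})\|_{L^{p'}(Q_T)}\le C\,\|f\|_{L^{p'}(Q_T;A)}$, where the last step uses that the sup norm of $f(x,t,\cdot,\cdot)$ in $A$ dominates the value at any point $(\tfrac{x}{\varepsilon},\tfrac{t}{\varepsilon^2})$, together with the continuity of almost periodic functions. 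Thus $(L_\varepsilon)_{\varepsilon\in E}$ is a bounded sequence in the dual of the separable Banach space $L^{p'}(Q_T;A)$ (separability holds because $A=AP(\mathbb{R}_{y,\tau}^{N+1})$ is separable, being the closure of trigonometric polynomials with rational data, and $Q_T$ is $\sigma$-finite), so by the sequential Banach-Alaoglu theorem there is a subsequence $E'$ and a functional $L$ with $L_\varepsilon(f)\to L(f)$ for all $f\in L^{p'}(Q_T;A)$.

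Next I would identify $L$. By Property (P)$_2$ / property \textbf{(2)} of the mean value (extended to $B_{AP}^p$) one checks that for $f\in L^{p'}(Q_T;A)$ the scaled function $f(x,t,\tfrac{x}{\varepsilon},\tfrac{t}{\varepsilon^2})$ converges weakly in $L^{p'}(Q_T)$ to $(x,t)\mapsto\mathfrak{M}_{y,\tau}(f(x,t,\cdot,\cdot))=\iint_{\mathcal{K}}\widehat{f}(x,t,s,s_0)\,d\beta$; combining this with weak-$L^p$ precompactness of $(u_\varepsilon)$ one sees $L$ factors through the quotient that kills $f$ with zero mean, i.e. $L$ depends on $f$ only through $\widehat f=\mathcal{G}\circ f\in L^{p'}(Q_T\times\mathcal{K})$. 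Hence $L$ induces a bounded linear functional on (a dense subspace of, hence all of) $L^{p'}(Q_T\times\mathcal{K})$, and by the Riesz representation of the dual of $L^{p'}$ there is a unique $\widehat{u}_0\in L^p(Q_T\times\mathcal{K})$ with $L(f)=\iint_{Q_T\times\mathcal{K}}\widehat{u}_0\,\widehat f\,dx\,dt\,d\beta$. Finally, since $L^p(Q_T\times\mathcal{K})\cong L^p(Q_T;L^p(\mathcal{K}))$ and $\mathcal{G}_1:\mathcal{B}_{AP}^p(\mathbb{R}^{N+1})\to L^p(\mathcal{K})$ is an isometric isomorphism, setting $u_0=(\mathcal{G}_1)^{-1}\circ\widehat{u}_0\in L^p(Q_T;\mathcal{B}_{AP}^p(\mathbb{R}_{y,\tau}^{N+1}))$ gives exactly \eqref{3.1}, so $u_\varepsilon\to u_0$ in $L^p(Q_T)$-weak $\Sigma$ along $E'$.

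The main obstacle is the density/approximation step needed to show that $L$ genuinely descends to the quotient $L^{p'}(Q_T\times\mathcal{K})$: one must verify that $\mathcal{G}\circ(\cdot)$ maps $L^{p'}(Q_T;A)$ onto a dense subspace of $L^{p'}(Q_T\times\mathcal{K})$ and that $L$ annihilates the kernel. For this I would use that finite sums $\sum_k\varphi_k(x,t)\,g_k(y,\tau)$ with $\varphi_k\in\mathcal{C}(Q_T)$ (or $\in L^{p'}(Q_T)$) and $g_k\in\mathrm{Trig}(\mathbb{R}_{y,\tau}^{N+1})$ are dense in $L^{p'}(Q_T;A)$, while their Gelfand images (with $\mathcal{G}$ of a trigonometric polynomial being the corresponding character on $\mathcal{K}$) are dense in $L^{p'}(Q_T\times\mathcal{K})$; the vanishing of $L$ on the kernel then follows from property \textbf{(1)}: if $\widehat f=0$ then $f\in\mathcal{N}$ pointwise in $(x,t)$, so $f(x,t,\tfrac{x}{\varepsilon},\tfrac{t}{\varepsilon^2})$ has a representative with zero mean and one shows $L_\varepsilon(f)\to 0$ using the boundedness of $(u_\varepsilon)$ together with the mean-value convergence. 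Everything else is routine functional analysis.
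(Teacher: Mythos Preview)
First, note that the paper itself does not prove Theorem~\ref{t3.1}; it merely records the statement and points to \cite[Theorems~3.1 and~3.5]{CMP} and \cite[Theorems~3.1 and~3.6]{NA} for the argument. Your overall scheme---bound the functionals $L_\varepsilon$ uniformly on $L^{p'}(Q_T;A)$, extract a weak-$\ast$ limit, and then identify it by Riesz representation on $L^{p'}(Q_T\times\mathcal{K})$---is the standard template from two-scale convergence and is indeed the skeleton of the arguments in those references.

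There is, however, a genuine gap in your extraction step. You claim that $A=AP(\mathbb{R}_{y,\tau}^{N+1})$ is separable, ``being the closure of trigonometric polynomials with rational data''; this is false. The characters $\gamma_k=\exp\!\big(ik\cdot(y,\tau)\big)$, $k\in\mathbb{R}^{N+1}$, are orthonormal for the Besicovitch inner product, and since the sup norm dominates the Besicovitch $L^2$-seminorm on $AP$, the uncountable family $\{\gamma_k:k\in\mathbb{R}^{N+1}\}$ is $\sqrt{2}$-separated in $\mathcal{B}(\mathbb{R}^{N+1})$. Trigonometric polynomials with rational frequencies generate only the proper closed subalgebra of almost periodic functions whose spectrum lies in $\mathbb{Q}^{N+1}$. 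Consequently $L^{p'}(Q_T;A)$ is not separable, and the sequential Banach--Alaoglu theorem does not apply as you wrote it. The same non-separability propagates to $L^{p'}(Q_T\times\mathcal{K})$ (the characters of $\mathcal{K}$ form an uncountable orthonormal system in $L^2(\mathcal{K})$), so one cannot simply transfer the diagonal argument to the Gelfand side either. This is precisely the point at which the almost periodic theory diverges from the periodic one, where $\mathcal{C}_{\mathrm{per}}(Y)$ \emph{is} separable and your argument would go through verbatim. The proofs in \cite{CMP,NA} handle the extraction through additional structural properties of algebras with mean value; filling this in is not a routine patch of what you have written.

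A smaller point: your ``quotient'' discussion is off. By Theorem~\ref{t2.1} the Gelfand map $\mathcal{G}:A\to\mathcal{C}(\mathcal{K})$ is an isometric \emph{isomorphism}, so there is no kernel to factor out at the level of continuous almost periodic test functions. The step you actually need is the a posteriori bound $|L(f)|\leq C\,\|\widehat f\|_{L^{p'}(Q_T\times\mathcal{K})}$, obtained from $|L_\varepsilon(f)|\leq C\,\|f^\varepsilon\|_{L^{p'}(Q_T)}$ and the mean-value convergence $\|f^\varepsilon\|_{L^{p'}(Q_T)}^{p'}\to\|\widehat f\|_{L^{p'}(Q_T\times\mathcal{K})}^{p'}$; this lets $L$ be \emph{extended} by density from $\mathcal{C}(\mathcal{K})$-valued to $L^{p'}(\mathcal{K})$-valued test functions, which is an extension and not a descent to a quotient.
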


The next result is of capital interest in the homogenization process.

\begin{theorem}
\label{t3.2}Let $1<p<\infty $. Let $(u_{\varepsilon })_{\varepsilon \in E}$
be a bounded sequence in $L^{p}(0,T;W_{0}^{1,p}(Q))$. Then there exist a
subsequence $E^{\prime }$ of $E$ and a couple of functions 
\begin{equation*}
(u_{0},u_{1})\in L^{p}(0,T;W_{0}^{1,p}(Q))\times L^{p}(Q_{T};\mathcal{B}%
_{AP}^{p}(\mathbb{R}_{\tau };\mathcal{B}_{\#AP}^{1,p}(\mathbb{R}_{y}^{N})))
\end{equation*}%
such that, as $E^{\prime }\ni \varepsilon \rightarrow 0$, 
\begin{equation}
u_{\varepsilon }\rightarrow u_{0}\ \text{in }L^{p}(0,T;W_{0}^{1,p}(Q))\text{%
-weak;}  \label{3.2}
\end{equation}%
\begin{equation}
\frac{\partial u_{\varepsilon }}{\partial x_{i}}\rightarrow \frac{\partial
u_{0}}{\partial x_{i}}+\frac{\overline{\partial }u_{1}}{\partial y_{i}}\text{%
\ in }L^{p}(Q_{T})\text{-weak }\Sigma \text{, }1\leq i\leq N.  \label{3.3}
\end{equation}
\end{theorem}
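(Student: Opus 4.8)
The plan is to follow the standard route of the $\Sigma$-convergence theory (carried out in detail in \cite{CMP}, see also \cite{NA}). First I would use that $L^{p}(0,T;W_{0}^{1,p}(Q))$ is reflexive: from the bounded sequence $(u_{\varepsilon })_{\varepsilon \in E}$ I extract a subsequence along which $u_{\varepsilon }\rightarrow u_{0}$ weakly in $L^{p}(0,T;W_{0}^{1,p}(Q))$ for some $u_{0}$, which is already (\ref{3.2}) and gives $\partial u_{\varepsilon }/\partial x_{i}\rightharpoonup \partial u_{0}/\partial x_{i}$ weakly in $L^{p}(Q_{T})$. Since $(u_{\varepsilon })$ and $(\partial u_{\varepsilon }/\partial x_{i})_{1\leq i\leq N}$ are bounded in $L^{p}(Q_{T})$, Theorem \ref{t3.1} then lets me pass to a further subsequence $E^{\prime }$ and obtain $\Sigma$-limits $v_{0},\chi _{i}\in L^{p}(Q_{T};\mathcal{B}_{AP}^{p}(\mathbb{R}_{y,\tau }^{N+1}))$ with $u_{\varepsilon }\rightarrow v_{0}$ and $\partial u_{\varepsilon }/\partial x_{i}\rightarrow \chi _{i}$ weakly $\Sigma $.

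Next I would identify these limits. Testing $\partial u_{\varepsilon }/\partial x_{i}$ against $\varepsilon \,\phi (x,t)\psi _{i}(x/\varepsilon ,t/\varepsilon ^{2})$ with $\phi \in \mathcal{C}_{0}^{\infty }(Q_{T})$ and $\psi _{i}\in AP^{\infty }(\mathbb{R}_{y,\tau }^{N+1})$, summing in $i$, integrating by parts in $x$ and letting $\varepsilon \rightarrow 0$ (the terms carrying an explicit factor $\varepsilon $ vanish by the $L^{p}(Q_{T})$-bounds), I expect
\begin{equation*}
\iint_{Q_{T}\times \mathcal{K}}\widehat{v}_{0}\,\phi \sum_{i=1}^{N}\partial _{i}\widehat{\psi }_{i}\,dx\,dt\,d\beta =0 ,
\end{equation*}
whence $\widehat{v}_{0}(x,t,\cdot ,\cdot )$ has vanishing weak $y$-gradient for a.e. $(x,t)$, i.e. $v_{0}$ does not depend on $s\in \mathcal{K}_{y}$. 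Comparing the weak $L^{p}$- and weak $\Sigma $-limits on test functions independent of the fast variables gives $\mathfrak{M}_{y,\tau }(\chi _{i})=\partial u_{0}/\partial x_{i}$ and $\mathfrak{M}_{y,\tau }(v_{0})=u_{0}$; combined with the $y$-independence (and, in the cases where the theorem is applied, with the strong $L^{2}(Q_{T})$-compactness of $(u_{\varepsilon })$) this identifies $v_{0}$ with $u_{0}$. I then set $g_{i}:=\chi _{i}-\partial u_{0}/\partial x_{i}$, a field of zero $(y,\tau )$-mean.

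To produce the corrector $u_{1}$, I would test $\sum_{i}(\partial u_{\varepsilon }/\partial x_{i})\,\phi \,\psi _{i}(x/\varepsilon ,t/\varepsilon ^{2})$ with $\phi \in \mathcal{C}_{0}^{\infty }(Q_{T})$ and $\boldsymbol{\psi }=(\psi _{i})$, $\psi _{i}\in AP^{\infty }(\mathbb{R}_{y,\tau }^{N+1})$, chosen so that $\mathfrak{M}_{y}(\psi _{i})=0$ and $\sum_{i}\partial \psi _{i}/\partial y_{i}=0$; integrating by parts in $x$ (the $\varepsilon ^{-1}$-term drops since $\boldsymbol{\psi }$ is $y$-solenoidal) and passing to the limit with the $\Sigma $-convergences above (the $u_{0}$-contribution vanishes because $\mathfrak{M}_{y}(\psi _{i})=0$) I arrive at
\begin{equation*}
\sum_{i=1}^{N}\iint_{Q_{T}\times \mathcal{K}}\widehat{g}_{i}\,\phi \,\widehat{\psi }_{i}\,dx\,dt\,d\beta =0 .
\end{equation*}
Thus, for a.e. $(x,t)$, $g(x,t,\cdot ,\cdot )$ is orthogonal in $(\mathcal{B}_{AP}^{p}(\mathbb{R}_{y,\tau }^{N+1}))^{N}$ to every $y$-solenoidal, $y$-mean-free almost periodic field. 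A Helmholtz-type (de Rham) decomposition of $(\mathcal{B}_{AP}^{p^{\prime }}(\mathbb{R}_{y}^{N}))^{N}$, applied fibrewise in $\tau \in \mathcal{K}_{\tau }$, then identifies $g(x,t,\tau ,\cdot )$ with an element of the range of the gradient $\overline{D}_{y}$ on $\mathcal{B}_{\#AP}^{1,p}(\mathbb{R}_{y}^{N})$. Since $\overline{D}_{y}$ is an isometry of the reflexive space $\mathcal{B}_{\#AP}^{1,p}(\mathbb{R}_{y}^{N})$ onto a closed subspace of $(\mathcal{B}_{AP}^{p}(\mathbb{R}_{y}^{N}))^{N}$, it has a bounded inverse there, and a measurable-selection argument in $(x,t,\tau )$ would finally produce $u_{1}\in L^{p}(Q_{T};\mathcal{B}_{AP}^{p}(\mathbb{R}_{\tau };\mathcal{B}_{\#AP}^{1,p}(\mathbb{R}_{y}^{N})))$ with $g_{i}=\overline{\partial }u_{1}/\partial y_{i}$, that is $\partial u_{\varepsilon }/\partial x_{i}\rightarrow \partial u_{0}/\partial x_{i}+\overline{\partial }u_{1}/\partial y_{i}$ weakly $\Sigma $, which is (\ref{3.3}).

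The hard part is this last identification: recognizing the mean-free residual $g$ as a genuine $y$-gradient in the completed corrector space $\mathcal{B}_{\#AP}^{1,p}(\mathbb{R}_{y}^{N})$. It relies on the orthogonal decomposition of almost periodic $L^{p^{\prime }}$ vector fields in the variable $y$, on the reflexivity and closed-range properties of $\overline{D}_{y}$ recorded in Section 3, and on a careful handling of measurability in $(x,t)$ together with the role of the fast time variable $\tau $, which enters the corrector only as a parameter since no operator $\overline{\partial }/\partial \tau $ is available on $\mathcal{B}_{\#AP}^{1,p}(\mathbb{R}_{y}^{N})$. All the details can be found in \cite{CMP} (see also \cite{NA}).
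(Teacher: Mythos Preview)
Your sketch is faithful to the approach of \cite{CMP,NA}, which is exactly what the paper does here: Theorem~\ref{t3.2} is stated without proof and the reader is referred to \cite[Theorems 3.1 and 3.5]{CMP} and \cite[Theorems 3.1 and 3.6]{NA}. So there is nothing to compare against beyond those references, and your outline matches them.

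One small remark on your write-up: the identification $v_{0}=u_{0}$ is not needed for the statement, and your parenthetical appeal to ``strong $L^{2}(Q_{T})$-compactness'' is both unjustified at this level of generality (no bound on $\partial u_{\varepsilon }/\partial t$ is assumed) and unnecessary. What you actually use in the key step is only that $v_{0}$ is $y$-independent: once you test against $y$-solenoidal $\boldsymbol{\psi }$ with $\mathfrak{M}_{y}(\psi _{i})=0$, the right-hand side $-\sum_{i}\iint \widehat{v}_{0}\,\partial _{x_{i}}\phi \,\widehat{\psi }_{i}$ vanishes because $\int_{\mathcal{K}_{y}}\widehat{\psi }_{i}\,d\beta _{y}=0$, regardless of whether $v_{0}$ still depends on $\tau$. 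The residual $g_{i}=\chi _{i}-\partial u_{0}/\partial x_{i}$ is then orthogonal to such fields and the de Rham argument you describe produces $u_{1}$. In short, drop the parenthesis and the claim $v_{0}=u_{0}$; the rest stands.
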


We shall also deal with the product of sequences. In this respect, we give a
further

\begin{definition}
\label{d3.2}\emph{A sequence }$(u_{\varepsilon })_{\varepsilon >0}\subset
L^{p}(Q_{T})$\emph{\ (}$1\leq p<\infty $\emph{) is said to }strongly $\Sigma 
$-converge\emph{\ in }$L^{p}(Q_{T})$\emph{\ to some }$u_{0}\in L^{p}(Q_{T};%
\mathcal{B}_{AP}^{p}(\mathbb{R}_{y,\tau }^{N+1}))$\emph{\ if it is weakly }$%
\Sigma $\emph{-convergent towards }$u_{0}$\emph{\ and further satisfies the
following condition: }%
\begin{equation}
\left\Vert u_{\varepsilon }\right\Vert _{L^{p}(Q_{T})}\rightarrow \left\Vert 
\widehat{u}_{0}\right\Vert _{L^{p}(Q_{T}\times \mathcal{K})}.  \label{3.12}
\end{equation}%
\emph{We denote this by writing }$u_{\varepsilon }\rightarrow u_{0}$\emph{\
in }$L^{p}(Q_{T})$\emph{-strong }$\Sigma $\emph{.}
\end{definition}

This being so we have the following.

\begin{theorem}[{\protect\cite[Theorem 6]{DPDE}}]
\label{t3.3}Let $1<p,q<\infty $ and $r\geq 1$ be such that $1/r=1/p+1/q\leq
1 $. Assume $(u_{\varepsilon })_{\varepsilon \in E}\subset L^{q}(Q_{T})$ is
weakly $\Sigma $-convergent in $L^{q}(Q_{T})$ to some $u_{0}\in L^{q}(Q_{T};%
\mathcal{B}_{AP}^{q}(\mathbb{R}_{y,\tau }^{N+1}))$, and $(v_{\varepsilon
})_{\varepsilon \in E}\subset L^{p}(Q_{T})$ is strongly $\Sigma $-convergent
in $L^{p}(Q_{T})$ to some $v_{0}\in L^{p}(Q_{T};\mathcal{B}_{AP}^{p}(\mathbb{%
R}_{y,\tau }^{N+1}))$. Then the sequence $(u_{\varepsilon }v_{\varepsilon
})_{\varepsilon \in E}$ is weakly $\Sigma $-convergent in $L^{r}(Q_{T})$ to $%
u_{0}v_{0}$.
\end{theorem}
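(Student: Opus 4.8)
The plan is to prove Theorem \ref{t3.3} by a density argument, exactly as one proves the classical ``weak times strong'' convergence lemma in the two-scale setting, transported to the $\Sigma$-convergence framework.

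First I would record the basic estimate. Since $1/r = 1/p + 1/q \le 1$, H\"older's inequality gives that $(u_\varepsilon v_\varepsilon)_{\varepsilon\in E}$ is bounded in $L^r(Q_T)$; by Theorem \ref{t3.1} (applied with exponent $r$, which is legitimate because $1<r$ unless $r=1$, in which case one argues directly with the duality against $L^\infty$, or more simply one notes that $r>1$ whenever neither of $p,q$ is infinite and $1/p+1/q<1$, and treats the borderline $1/p+1/q=1$, $r=1$ separately) a subsequence $\Sigma$-converges in $L^r(Q_T)$ to some limit; the whole task is to identify this limit as $u_0 v_0$ and to upgrade ``subsequence'' to ``the full sequence'', the latter being automatic once the limit is uniquely characterized.

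Next, fix a test function $\phi \in L^{r'}(Q_T; A)$, or rather first take $\phi \in \mathcal{C}(\overline{Q_T})\otimes (\text{Trig}(\mathbb{R}^{N}_y)\otimes \text{Trig}(\mathbb{R}_\tau))$, a dense subclass. We must pass to the limit in
\begin{equation*}
\int_{Q_T} u_\varepsilon(x,t) v_\varepsilon(x,t)\, \phi\!\left(x,t,\tfrac{x}{\varepsilon},\tfrac{t}{\varepsilon^2}\right) dx\,dt.
\end{equation*}
The key move is to regard $v_\varepsilon\cdot\phi^\varepsilon$ as a test function for the weak $\Sigma$-convergence of $(u_\varepsilon)$. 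To make this rigorous one approximates $\widehat{v}_0 \in L^p(Q_T\times\mathcal{K})$ by a finite sum of elementary tensors $\sum_k \psi_k(x,t)\,\widehat{g}_k(s,s_0)$ with $\psi_k \in \mathcal{C}(\overline{Q_T})$ and $g_k\in A$; the corresponding ``admissible test function'' $\sum_k \psi_k(x,t) g_k(x/\varepsilon, t/\varepsilon^2)$ is exactly the kind of function for which strong $\Sigma$-convergence of $(v_\varepsilon)$ is designed to be tested. Concretely, write $v_\varepsilon\phi^\varepsilon = (v_\varepsilon - w_\varepsilon)\phi^\varepsilon + w_\varepsilon \phi^\varepsilon$ where $w_\varepsilon(x,t)=\sum_k\psi_k(x,t) g_k(x/\varepsilon,t/\varepsilon^2)$ is built from the tensor approximation of $\widehat v_0$. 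The term $\int u_\varepsilon w_\varepsilon \phi^\varepsilon$ converges, by the weak $\Sigma$-convergence of $(u_\varepsilon)$ (the product $w_\varepsilon\phi^\varepsilon$ is again of admissible form, its ``two-scale limit'' being $(\sum_k\psi_k\widehat g_k)\widehat\phi$), to $\iint_{Q_T\times\mathcal{K}} \widehat u_0 (\sum_k \psi_k\widehat g_k)\widehat\phi$. For the remainder one uses H\"older in the form
\begin{equation*}
\left|\int_{Q_T} u_\varepsilon (v_\varepsilon - w_\varepsilon)\phi^\varepsilon\right| \le \|u_\varepsilon\|_{L^q(Q_T)}\, \|v_\varepsilon - w_\varepsilon\|_{L^p(Q_T)}\, \|\phi^\varepsilon\|_{L^\infty(Q_T)},
\end{equation*}
and here is where the strong $\Sigma$-convergence is essential: $\|v_\varepsilon\|_{L^p(Q_T)}\to\|\widehat v_0\|_{L^p(Q_T\times\mathcal{K})}$ together with the weak $\Sigma$-convergence of $v_\varepsilon$ and of $w_\varepsilon$ (to $\sum_k\psi_k\widehat g_k$) gives $\limsup_\varepsilon\|v_\varepsilon - w_\varepsilon\|_{L^p(Q_T)} \le 2\|\widehat v_0 - \sum_k\psi_k\widehat g_k\|_{L^p(Q_T\times\mathcal{K})}$, which is as small as we like by the choice of the tensor approximation. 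Letting $\varepsilon\to0$ and then refining the approximation of $\widehat v_0$ yields
\begin{equation*}
\int_{Q_T} u_\varepsilon v_\varepsilon \phi^\varepsilon\, dx\,dt \longrightarrow \iint_{Q_T\times\mathcal{K}} \widehat u_0\, \widehat v_0\, \widehat\phi\, dx\,dt\,d\beta,
\end{equation*}
first for $\phi$ in the dense subclass, then for all $\phi\in L^{r'}(Q_T;A)$ by the uniform $L^r$ bound on $u_\varepsilon v_\varepsilon$. This identifies the $\Sigma$-limit of every $\Sigma$-convergent subsequence as $u_0 v_0$, hence the full sequence $\Sigma$-converges, which is the claim.

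I expect the main obstacle to be the bookkeeping around the strong-convergence estimate: one must verify that $\|v_\varepsilon - w_\varepsilon\|_{L^p(Q_T)}$ is controlled purely in terms of the $L^p(Q_T\times\mathcal{K})$ distance between $\widehat v_0$ and its tensor approximation, using only weak $\Sigma$-convergence of the two sequences plus the norm convergence \eqref{3.12}. The clean way is the expansion $\|v_\varepsilon - w_\varepsilon\|^p$... actually, for $p=2$ this is a one-line computation with inner products; for general $p$ one instead argues that $v_\varepsilon - w_\varepsilon$ is bounded in $L^p$, so up to a subsequence it weakly $\Sigma$-converges, necessarily to $\widehat v_0 - \sum_k\psi_k\widehat g_k$, and then lower semicontinuity of the $L^p$ norm under weak $\Sigma$-convergence together with $\|v_\varepsilon\|_{L^p}\to\|\widehat v_0\|_{L^p}$ gives the bound; a small additional care is needed because $w_\varepsilon$ by itself is strongly $\Sigma$-convergent (it is a ``nice'' oscillating test function), so $\|w_\varepsilon\|_{L^p}\to\|\sum_k\psi_k\widehat g_k\|_{L^p}$ is known, and one combines the three facts. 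All other ingredients — H\"older, density of tensor products in $L^p(Q_T\times\mathcal{K})$, and the passage to the limit against admissible oscillating test functions — are routine given Theorems \ref{t3.1} and \ref{t3.2} and the definitions.
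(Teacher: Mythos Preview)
The paper does not prove Theorem~\ref{t3.3} at all: it is quoted verbatim from \cite[Theorem~6]{DPDE} and used as a black box. Your density/approximation argument is exactly the standard route to such ``weak times strong'' product results in the two-scale and $\Sigma$-convergence literature, and it is essentially what one finds in the cited reference. So there is nothing to compare against within this paper, and your outline is correct in spirit.

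One remark on the point you yourself flag as the main obstacle. For general $p\neq 2$ the control of $\limsup_\varepsilon \|v_\varepsilon - w_\varepsilon\|_{L^p(Q_T)}$ is cleanest if you use (or prove) the equivalent characterization of strong $\Sigma$-convergence: $v_\varepsilon \to v_0$ strongly $\Sigma$ in $L^p$ if and only if for every $\eta>0$ there exists $\psi\in L^p(Q_T;A)$ with $\|\widehat v_0-\widehat\psi\|_{L^p(Q_T\times\mathcal{K})}\le\eta$ and $\limsup_\varepsilon\|v_\varepsilon-\psi^\varepsilon\|_{L^p(Q_T)}\le\eta$. This is proved (for $p\ge 2$ via Clarkson, for $1<p<2$ via the dual Clarkson inequality) in the same circle of papers and gives you directly the bound you need, without the slightly awkward ``lower semicontinuity plus norm convergence'' juggling you sketch. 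With that characterization in hand, your three-term splitting and H\"older estimate go through without further difficulty, including the borderline case $r=1$ where one simply tests against $\phi\in\mathcal{C}(\overline{Q_T})\otimes A\subset L^\infty$.
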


As a consequence of the above theorem the following holds.

\begin{corollary}
\label{c3.1}Let $(u_{\varepsilon })_{\varepsilon \in E}\subset L^{p}(Q_{T})$
and $(v_{\varepsilon })_{\varepsilon \in E}\subset L^{p^{\prime
}}(Q_{T})\cap L^{\infty }(Q_{T})$ ($1<p<\infty $ and $p^{\prime }=p/(p-1)$)
be two sequences such that:

\begin{itemize}
\item[(i)] $u_{\varepsilon }\rightarrow u_{0}$ in $L^{p}(Q_{T})$-weak $%
\Sigma $;

\item[(ii)] $v_{\varepsilon }\rightarrow v_{0}$ in $L^{p^{\prime }}(Q_{T})$%
-strong $\Sigma $;

\item[(iii)] $(v_{\varepsilon })_{\varepsilon \in E}$ is bounded in $%
L^{\infty }(Q_{T})$.
\end{itemize}

\noindent Then $u_{\varepsilon }v_{\varepsilon }\rightarrow u_{0}v_{0}$ in $%
L^{p}(Q_{T})$-weak $\Sigma $.
\end{corollary}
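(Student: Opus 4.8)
The plan is to deduce the statement from Theorem \ref{t3.3} in its degenerate case $r=1$, and then to promote the convergence so obtained by exploiting the uniform bound (iii). Concretely, I would apply Theorem \ref{t3.3} to the pair $(u_{\varepsilon}),(v_{\varepsilon})$, taking in that theorem $q=p$ (the exponent of the weakly convergent factor) and replacing its exponent $p$ by $p^{\prime}$ (the exponent of the strongly convergent factor). Since $1<p<\infty$ we have $1<p^{\prime}<\infty$, and $\frac{1}{r}=\frac{1}{p}+\frac{1}{p^{\prime}}=1$, so $r=1$ is admissible; assumptions (i) and (ii) are precisely the hypotheses of Theorem \ref{t3.3}. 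It therefore yields $u_{\varepsilon}v_{\varepsilon}\rightarrow u_{0}v_{0}$ in $L^{1}(Q_{T})$-weak $\Sigma$, that is, writing $g^{\varepsilon}(x,t)=g(x,t,x/\varepsilon,t/\varepsilon^{2})$ and $A=AP(\mathbb{R}_{y,\tau}^{N+1})$,
\begin{equation*}
\int_{Q_{T}}u_{\varepsilon}v_{\varepsilon}\Psi^{\varepsilon}\,dxdt\longrightarrow \iint_{Q_{T}\times\mathcal{K}}\widehat{u}_{0}\widehat{v}_{0}\widehat{\Psi}\,dxdtd\beta \qquad (\Psi\in L^{\infty}(Q_{T};A)).
\end{equation*}

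Next I would record two auxiliary facts. A weakly $\Sigma$-convergent sequence in $L^{p}(Q_{T})$ is bounded there: apply the uniform boundedness principle to the functionals $\Psi\mapsto\int_{Q_{T}}u_{\varepsilon}\Psi^{\varepsilon}$ on $L^{p^{\prime}}(Q_{T};A)$, using that test functions independent of $(y,\tau)$ already recover the full $L^{p}$-norm of $u_{\varepsilon}$. Combined with (iii), this shows that $(u_{\varepsilon}v_{\varepsilon})_{\varepsilon\in E}$ is bounded in $L^{p}(Q_{T})$, say by $C_{1}$. Moreover, testing the weak $\Sigma$-convergence $v_{\varepsilon}\rightarrow v_{0}$ (a consequence of (ii)) against the nonnegative elements of $L^{p}(Q_{T};A)$ and using $|v_{\varepsilon}|\leq M:=\sup_{\varepsilon}\left\Vert v_{\varepsilon}\right\Vert _{L^{\infty}(Q_{T})}$ from (iii) yields $\widehat{v}_{0}\in L^{\infty}(Q_{T}\times\mathcal{K})$ with $\left\Vert \widehat{v}_{0}\right\Vert _{L^{\infty}(Q_{T}\times\mathcal{K})}\leq M$; in particular $\widehat{u}_{0}\widehat{v}_{0}\in L^{p}(Q_{T}\times\mathcal{K})$, so $u_{0}v_{0}\in L^{p}(Q_{T};\mathcal{B}_{AP}^{p}(\mathbb{R}_{y,\tau}^{N+1}))$ and the asserted convergence is meaningful.

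For the core step I would fix $\Phi\in L^{p^{\prime}}(Q_{T};A)$ and $\eta>0$ and, using that $L^{\infty}(Q_{T};A)$ is dense in $L^{p^{\prime}}(Q_{T};A)$ (by truncation), choose $\Psi\in L^{\infty}(Q_{T};A)$ with $\left\Vert \Phi-\Psi\right\Vert _{L^{p^{\prime}}(Q_{T};A)}<\eta$. I would then split $\int_{Q_{T}}u_{\varepsilon}v_{\varepsilon}\Phi^{\varepsilon}-\iint_{Q_{T}\times\mathcal{K}}\widehat{u}_{0}\widehat{v}_{0}\widehat{\Phi}$ as $\int_{Q_{T}}u_{\varepsilon}v_{\varepsilon}(\Phi-\Psi)^{\varepsilon}$ plus $\big(\int_{Q_{T}}u_{\varepsilon}v_{\varepsilon}\Psi^{\varepsilon}-\iint_{Q_{T}\times\mathcal{K}}\widehat{u}_{0}\widehat{v}_{0}\widehat{\Psi}\big)$ plus $\iint_{Q_{T}\times\mathcal{K}}\widehat{u}_{0}\widehat{v}_{0}(\widehat{\Psi}-\widehat{\Phi})$. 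The bracketed middle term tends to $0$ as $\varepsilon\rightarrow 0$ by the $L^{1}$-weak $\Sigma$-convergence above, since $\Psi\in L^{\infty}(Q_{T};A)$. For the first term, H\"{o}lder's inequality and $|(\Phi-\Psi)^{\varepsilon}(x,t)|\leq\left\Vert (\Phi-\Psi)(x,t,\cdot,\cdot)\right\Vert _{A}$ give the bound $C_{1}\left\Vert (\Phi-\Psi)^{\varepsilon}\right\Vert _{L^{p^{\prime}}(Q_{T})}\leq C_{1}\left\Vert \Phi-\Psi\right\Vert _{L^{p^{\prime}}(Q_{T};A)}<C_{1}\eta$. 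For the last term, H\"{o}lder's inequality together with $\left\Vert \widehat{v}_{0}\right\Vert _{L^{\infty}}\leq M$ and the fact that the Gelfand transformation does not increase the norm from $A$ into $L^{p^{\prime}}(\mathcal{K})$ (the Haar measure $\beta$ being a probability measure) give the bound $M\left\Vert \widehat{u}_{0}\right\Vert _{L^{p}(Q_{T}\times\mathcal{K})}\left\Vert \Phi-\Psi\right\Vert _{L^{p^{\prime}}(Q_{T};A)}<M\left\Vert \widehat{u}_{0}\right\Vert _{L^{p}(Q_{T}\times\mathcal{K})}\eta$. Hence $\limsup_{\varepsilon\rightarrow 0}\big|\int_{Q_{T}}u_{\varepsilon}v_{\varepsilon}\Phi^{\varepsilon}-\iint\widehat{u}_{0}\widehat{v}_{0}\widehat{\Phi}\big|\leq(C_{1}+M\left\Vert \widehat{u}_{0}\right\Vert _{L^{p}(Q_{T}\times\mathcal{K})})\eta$, and letting $\eta\rightarrow 0$ gives $u_{\varepsilon}v_{\varepsilon}\rightarrow u_{0}v_{0}$ in $L^{p}(Q_{T})$-weak $\Sigma$.

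The step I expect to be the main obstacle is precisely this exponent bookkeeping: with the exponents at hand Theorem \ref{t3.3} is only available in the endpoint form $r=1$, so the real content is the upgrade of $L^{1}$-weak $\Sigma$-convergence to $L^{p}$-weak $\Sigma$-convergence. Assumption (iii) is exactly what makes the upgrade possible, on two fronts: it provides the uniform $L^{p}$-bound on $(u_{\varepsilon}v_{\varepsilon})$ needed to control the first term, and it forces $\widehat{v}_{0}\in L^{\infty}(Q_{T}\times\mathcal{K})$, which is needed to control the last term.
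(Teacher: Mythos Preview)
Your argument is correct, and it is exactly the kind of derivation the paper has in mind: the corollary is stated there without proof, merely as ``a consequence of the above theorem'' (Theorem~\ref{t3.3}). Your two-step scheme --- apply Theorem~\ref{t3.3} at the endpoint $r=1$ (with $q=p$ for the weak factor and $p'$ for the strong factor), then upgrade the resulting $L^{1}$-weak $\Sigma$-convergence to $L^{p}$-weak $\Sigma$ by density of $L^{\infty}(Q_{T};A)$ in $L^{p'}(Q_{T};A)$ together with the uniform $L^{p}$ bound on $(u_{\varepsilon}v_{\varepsilon})$ --- is the natural way to flesh out that one-line reference.

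A couple of minor comments on presentation. The boundedness of a weakly $\Sigma$-convergent sequence in $L^{p}(Q_{T})$ is indeed standard and your Banach--Steinhaus justification is fine; in the paper this is implicitly taken for granted. Your argument that $\widehat{v}_{0}\in L^{\infty}(Q_{T}\times\mathcal{K})$ with bound $M$ is correct but can be phrased even more quickly: since $(M\pm v_{\varepsilon})\geq 0$ and weak $\Sigma$-limits of nonnegative sequences are nonnegative (test against $\Psi\geq 0$), one gets $|\widehat{v}_{0}|\leq M$ a.e. Finally, you correctly identify where hypothesis~(iii) is used twice: once to bound $(u_{\varepsilon}v_{\varepsilon})$ in $L^{p}$, and once to place $u_{0}v_{0}$ in $L^{p}(Q_{T};\mathcal{B}_{AP}^{p})$ so that the target convergence is well posed.
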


Regarding the reiterated $\Sigma$-convergence we refer the reader to \cite%
{Douanla1, CPAA}.

\section{Homogenization results}

Throughout this section we make the following assumption on the functions
involved in (\ref{0.1}):

\begin{itemize}
\item[(\textbf{A5})] \textbf{Almost periodicity}. We assume that the
functions $a_{ij}$ and $b$ lie in $B_{AP}^{2}(\mathbb{R}_{y,\tau
}^{N+1})\cap L^{\infty }(\mathbb{R}_{y,\tau }^{N+1})$ for all $1\leq i,j\leq
N$. We also assume that the function $\rho $ belongs to $B_{AP}^{2}(\mathbb{R%
}^{N})\cap L^{\infty }(\mathbb{R}^{N})$ with $\mathfrak{M}_{y}(\rho )>0$.
Finally the function $\tau \mapsto f(\tau ,r)$ lies in $(AP(\mathbb{R}_{\tau
}))^{N}$ for any $r\in \mathbb{R}^{N}$.
\end{itemize}

\subsection{Preliminary results}

Let $1+\frac{2N}{N+2}\leq p<\infty $. It is a fact that the topological dual
of $\mathcal{B}_{AP}^{p}(\mathbb{R}_{\tau };\mathcal{B}_{\#AP}^{1,p}(\mathbb{%
R}_{y}^{N}))$ is $\mathcal{B}_{AP}^{p^{\prime }}(\mathbb{R}_{\tau };[%
\mathcal{B}_{\#AP}^{1,p}(\mathbb{R}_{y}^{N})]^{\prime })$; this can be
easily seen from the fact that $\mathcal{B}_{\#AP}^{1,p}(\mathbb{R}_{y}^{N})$
is reflexive (see Section 2) and $\mathcal{B}_{AP}^{p}(\mathbb{R}_{\tau };%
\mathcal{B}_{\#AP}^{1,p}(\mathbb{R}_{y}^{N}))$ is isometrically isomorphic
to $L^{p}(\mathcal{K}_{\tau };\mathcal{B}_{\#AP}^{1,p}(\mathbb{R}_{y}^{N}))$%
. We denote by $\left\langle ,\right\rangle $ (resp. $[,]$) the duality
pairing between $\mathcal{B}_{\#AP}^{1,p}(\mathbb{R}_{y}^{N})$ (resp. $%
\mathcal{B}_{AP}^{p}(\mathbb{R}_{\tau };\mathcal{B}_{\#AP}^{1,p}(\mathbb{R}%
_{y}^{N}))$) and $[\mathcal{B}_{\#AP}^{1,p}(\mathbb{R}_{y}^{N})]^{\prime }$
(resp. $\mathcal{B}_{AP}^{p^{\prime }}(\mathbb{R}_{\tau };[\mathcal{B}%
_{\#AP}^{1,p}(\mathbb{R}_{y}^{N})]^{\prime })$). Hence, for $u\in \mathcal{B}%
_{AP}^{p^{\prime }}(\mathbb{R}_{\tau };[\mathcal{B}_{\#AP}^{1,p}(\mathbb{R}%
_{y}^{N})]^{\prime })$ and $v\in \mathcal{B}_{AP}^{p}(\mathbb{R}_{\tau };%
\mathcal{B}_{\#AP}^{1,p}(\mathbb{R}_{y}^{N}))$, 
\begin{equation*}
\left[ u,v\right] =\int_{\mathcal{K}_{\tau }}\left\langle \widehat{u}(s_{0}),%
\widehat{v}(s_{0})\right\rangle d\beta _{\tau }(s_{0}).
\end{equation*}%
For a function $\psi \in \mathcal{D}_{AP,\rho }(\mathbb{R}^{N})$ we know
that $\psi $ expresses as follows: $\psi =\varrho _{y}(\psi _{1})$ with $%
\psi _{1}\in AP_{\rho }^{\infty }(\mathbb{R}_{y}^{N})=\{v\in AP^{\infty }(%
\mathbb{R}_{y}^{N}):\mathfrak{M}(\rho \psi _{1})=0\}$, where $\varrho _{y}$
denotes the canonical mapping of $B_{AP}^{p}(\mathbb{R}_{y}^{N})$ onto $%
\mathcal{B}_{AP}^{p}(\mathbb{R}_{y}^{N})$; see Section 2. We will refer to $%
\psi _{1}$ as the representative of $\psi $ in $AP^{\infty }(\mathbb{R}%
_{y}^{N})$. Likewise we define the representative of $\psi \in \mathcal{D}%
_{AP}(\mathbb{R}_{\tau })\otimes \mathcal{D}_{AP,\rho }(\mathbb{R}^{N})$ as
an element of $AP^{\infty }(\mathbb{R}_{\tau })\otimes AP_{\rho }^{\infty }(%
\mathbb{R}_{y}^{N})$ satisfying a similar property.

With all this in mind, we have the following

\begin{lemma}
\label{l5.1}Let $\psi \in \mathcal{C}_{0}^{\infty }(Q_{T})\otimes \lbrack 
\mathcal{D}_{AP}(\mathbb{R}_{\tau })\otimes \mathcal{D}_{AP,\rho }(\mathbb{R}%
_{y}^{N})]$ and $\psi _{1}$ be its representative in $\mathcal{C}%
_{0}^{\infty }(Q_{T})\otimes \lbrack AP^{\infty }(\mathbb{R}_{\tau })\otimes
AP_{\rho }^{\infty }(\mathbb{R}_{y}^{N})]$. Let $(u_{\varepsilon
})_{\varepsilon \in E}$, $E^{\prime }$ and $(u_{0},u_{1})$ be as in Theorem 
\emph{\ref{t3.2}}. Then, as $E^{\prime }\ni \varepsilon \rightarrow 0$, 
\begin{equation*}
\int_{Q_{T}}\frac{1}{\varepsilon }u_{\varepsilon }\rho ^{\varepsilon }\psi
_{1}^{\varepsilon }dxdt\rightarrow \int_{Q_{T}}\left[ \rho u_{1}(x,t),\psi
(x,t)\right] dxdt.
\end{equation*}
\end{lemma}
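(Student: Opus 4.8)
The plan is to remove the singular prefactor $\frac{1}{\varepsilon }$ by an integration by parts in $x$ --- legitimate precisely because $\mathfrak{M}_{y}(\rho \psi _{1})=0$, the defining feature of $\mathcal{D}_{AP,\rho }(\mathbb{R}_{y}^{N})$ --- and then to pass to the limit by combining the weak $\Sigma $-convergence of $\nabla u_{\varepsilon }$ (Theorem \ref{t3.2}) with the strong $L^{2}(Q_{T})^{N}$-compactness of $(u_{\varepsilon })$ (Proposition \ref{p2.2}); along $E^{\prime }$ the strong $L^{2}$-limit necessarily coincides with $u_{0}$. By bilinearity and continuity it suffices to treat $\psi _{1}(x,t,y,\tau )=\varphi (x,t)g(\tau )h(y)$ with $\varphi \in \mathcal{C}_{0}^{\infty }(Q_{T})$, $g\in AP^{\infty }(\mathbb{R}_{\tau })$ and $h\in AP_{\rho }^{\infty }(\mathbb{R}_{y}^{N})$, so that $\mathfrak{M}_{y}(\rho h)=0$.

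Since $\mathfrak{M}_{y}(\rho h)=0$, the first step is to construct a corrector field $\boldsymbol{\Phi }_{1}\in (\mathcal{B}_{AP}^{p^{\prime }}(\mathbb{R}_{y}^{N})\cap L^{\infty }(\mathbb{R}_{y}^{N}))^{N}$ with $\text{div}_{p^{\prime }}\boldsymbol{\Phi }_{1}=\rho h$ in $(\mathcal{B}_{\#AP}^{1,p}(\mathbb{R}_{y}^{N}))^{\prime }$; this rests on the surjectivity of $\text{div}_{p^{\prime }}$, itself a consequence of $\overline{D}_{p}$ being an isometry of the reflexive space $\mathcal{B}_{\#AP}^{1,p}(\mathbb{R}_{y}^{N})$ onto a closed subspace of $(\mathcal{B}_{AP}^{p}(\mathbb{R}_{y}^{N}))^{N}$ (the boundedness and the weak differentiability $\overline{\text{div}}_{y}\boldsymbol{\Phi }_{1}=\rho h$ a.e. coming from regularity of the corrector equation, the datum $\rho h$ being bounded). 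Putting $\boldsymbol{\Phi }(x,t,y,\tau )=\varphi (x,t)g(\tau )\boldsymbol{\Phi }_{1}(y)$, the chain rule $\text{div}_{x}(\boldsymbol{\Phi }^{\varepsilon })=(\text{div}_{x}\boldsymbol{\Phi })^{\varepsilon }+\frac{1}{\varepsilon }(\overline{\text{div}}_{y}\boldsymbol{\Phi })^{\varepsilon }$ yields $\frac{1}{\varepsilon }\rho ^{\varepsilon }\psi _{1}^{\varepsilon }=\text{div}_{x}(\boldsymbol{\Phi }^{\varepsilon })-(\text{div}_{x}\boldsymbol{\Phi })^{\varepsilon }$, so that, $\boldsymbol{\Phi }^{\varepsilon }$ being compactly supported in $Q$ and $u_{\varepsilon }(t)$ lying in $\mathbb{V}$,
\begin{equation*}
\int_{Q_{T}}\frac{1}{\varepsilon }u_{\varepsilon }\rho ^{\varepsilon }\psi _{1}^{\varepsilon }\,dxdt=-\int_{Q_{T}}\nabla u_{\varepsilon }\cdot \boldsymbol{\Phi }^{\varepsilon }\,dxdt-\int_{Q_{T}}u_{\varepsilon }\,(\text{div}_{x}\boldsymbol{\Phi })^{\varepsilon }\,dxdt .
\end{equation*}

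Now let $E^{\prime }\ni \varepsilon \rightarrow 0$. In the first integral, Theorem \ref{t3.2} (applied with a test function that is only Besicovitch in $y$, which is licit because of the uniform bound (\ref{2.8}) on $\nabla u_{\varepsilon }$ and density) gives the limit $-\iint_{Q_{T}\times \mathcal{K}}(\widehat{\nabla _{x}u_{0}}+\widehat{\overline{D}_{y}u_{1}})\cdot \widehat{\boldsymbol{\Phi }}\,dxdtd\beta $. In the second integral, $(\text{div}_{x}\boldsymbol{\Phi })^{\varepsilon }$ converges weak-$\ast $ in $L^{\infty }(Q_{T})$ to $\text{div}_{x}\mathfrak{M}_{y,\tau }(\boldsymbol{\Phi })$ while $u_{\varepsilon }\rightarrow u_{0}$ strongly in $L^{2}(Q_{T})^{N}$, so it tends to $\int_{Q_{T}}u_{0}\,\text{div}_{x}\mathfrak{M}_{y,\tau }(\boldsymbol{\Phi })\,dxdt=-\int_{Q_{T}}\nabla _{x}u_{0}\cdot \mathfrak{M}_{y,\tau }(\boldsymbol{\Phi })\,dxdt=-\iint_{Q_{T}\times \mathcal{K}}\widehat{\nabla _{x}u_{0}}\cdot \widehat{\boldsymbol{\Phi }}\,dxdtd\beta $ (using $u_{0}\in \mathbb{V}$ and $\int_{\mathcal{K}}\widehat{\boldsymbol{\Phi }}\,d\beta =\mathfrak{M}_{y,\tau }(\boldsymbol{\Phi })$). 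The two $u_{0}$-contributions cancel, leaving $-\iint_{Q_{T}\times \mathcal{K}}\widehat{\overline{D}_{y}u_{1}}\cdot \widehat{\boldsymbol{\Phi }}\,dxdtd\beta $, which by the definition (\ref{2.6''}) of $\text{div}_{p^{\prime }}$ and $\text{div}_{p^{\prime }}\boldsymbol{\Phi }_{1}=\rho h$ equals $\int_{Q_{T}}\langle \text{div}_{p^{\prime }}\boldsymbol{\Phi }(x,t),u_{1}(x,t)\rangle \,dxdt=\int_{Q_{T}}[\rho u_{1}(x,t),\psi (x,t)]\,dxdt$, as claimed.

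The crux is the corrector construction --- equivalently, the fact that $\rho h$, having zero mean, defines a bounded functional on $\mathcal{B}_{\#AP}^{1,p}(\mathbb{R}_{y}^{N})$, which is what gives meaning to the pairing $[\rho u_{1},\psi ]$ in the statement. Since there is no Poincar\'{e}--Wirtinger inequality on $\mathcal{B}_{AP,\rho }^{1,p}(\mathbb{R}_{y}^{N})$, one is forced to work in the reflexive completion $\mathcal{B}_{\#AP}^{1,p}(\mathbb{R}_{y}^{N})$, and the required bound on $v\mapsto \mathfrak{M}_{y}(\rho h\,v)$ must be drawn from the smoothness of $h$ and assumption (\textbf{A5}) on $\rho $; one must also verify that this reading of $[\rho u_{1},\psi ]$ agrees with $\langle \text{div}_{p^{\prime }}\boldsymbol{\Phi }_{1},u_{1}\rangle $, which follows from the symmetry of the $\rho $-weighted bracket and the density of $\mathcal{D}_{AP,\rho }(\mathbb{R}_{y}^{N})$ in $\mathcal{B}_{\#AP}^{1,p}(\mathbb{R}_{y}^{N})$. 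The remaining steps --- the two passages to the limit and the bookkeeping with $\int_{\mathcal{K}}\widehat{\boldsymbol{\Phi }}\,d\beta $ --- are routine once this is in place.
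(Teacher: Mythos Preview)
Your overall strategy --- absorb the $\frac{1}{\varepsilon}$ by writing $\rho\psi_1$ as a $y$-divergence, integrate by parts, then pass to the limit via Theorem~\ref{t3.2} --- is exactly the mechanism behind the paper's proof. The paper carries this out by solving $\Delta_y\phi=\rho\psi_1$ in the class $AP(\mathbb{R}_y^N)$ (a result quoted from \cite{Jikov}, valid because $\mathfrak{M}_y(\rho\psi_1)=0$) and then invoking \cite[Lemma~3.4]{NgWou}; taking $\boldsymbol{\Phi}_1=\nabla_y\phi$ gives precisely a solution to your divergence equation, so the two routes are variants of one argument.

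The genuine gap in your write-up is the corrector construction. Your duality argument (surjectivity of $\mathrm{div}_{p'}$ because $\overline{D}_p$ has closed range) only produces $\boldsymbol{\Phi}_1$ as an element of the \emph{quotient} space $(\mathcal{B}_{AP}^{p'}(\mathbb{R}_y^N))^N$. That is not a function space: you cannot evaluate $\boldsymbol{\Phi}_1(x/\varepsilon)$, you cannot apply the pointwise chain rule $\mathrm{div}_x(\boldsymbol{\Phi}^\varepsilon)=(\mathrm{div}_x\boldsymbol{\Phi})^\varepsilon+\frac{1}{\varepsilon}(\overline{\mathrm{div}}_y\boldsymbol{\Phi})^\varepsilon$, and you cannot feed $\boldsymbol{\Phi}^\varepsilon$ as a test function into the $\Sigma$-convergence of $\nabla u_\varepsilon$ (Definition~\ref{d3.1} requires $f\in L^{p'}(Q_T;A)$ with $A=AP(\mathbb{R}^{N+1})$, not Besicovitch classes). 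Your parenthetical appeal to ``regularity of the corrector equation'' does not repair this: there is no elliptic equation in sight, only an underdetermined first-order relation. The paper's choice of the \emph{Laplacian} corrector is precisely what supplies the missing regularity: the result from \cite{Jikov} yields $\phi$ as a genuine continuous almost periodic function, so $\boldsymbol{\Phi}_1=\nabla_y\phi$ lives in $AP(\mathbb{R}_y^N)^N$ and every subsequent step becomes legitimate. You should replace your abstract surjectivity argument by this concrete construction.

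A minor point: you invoke Proposition~\ref{p2.2} for the second integral, but Lemma~\ref{l5.1} is stated under the hypotheses of Theorem~\ref{t3.2} alone, which do not include strong $L^2$-compactness. Once $\boldsymbol{\Phi}_1\in AP(\mathbb{R}_y^N)^N$, the term $(\mathrm{div}_x\boldsymbol{\Phi})^\varepsilon$ is an admissible test function and the weak $\Sigma$-convergence $u_\varepsilon\to u_0$ already gives the limit; no strong convergence is needed.
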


\begin{proof}
We recall that for $\psi _{1}$ as above, we have 
\begin{equation*}
\psi _{1}^{\varepsilon }(x,t)=\psi _{1}\left( x,t,\frac{x}{\varepsilon },%
\frac{t}{\varepsilon ^{2}}\right) \text{ for }(x,t)\in Q_{T}\text{.}
\end{equation*}%
This being so, since $\rho \psi _{1}(x,t,\cdot ,\tau )\in AP_{\rho }(\mathbb{%
R}_{y}^{N})=\{u\in AP(\mathbb{R}_{y}^{N}):\mathfrak{M}_{y}(\rho u)=0\}$,
there exists \cite{Jikov} a unique $\phi \in \mathcal{C}_{0}^{\infty
}(Q_{T})\otimes \lbrack AP^{\infty }(\mathbb{R}_{\tau })\otimes AP(\mathbb{R}%
_{y}^{N})]$ such that $\rho \psi _{1}=\Delta _{y}\phi $. The result follows
at once by the application of \cite[Lemma 3.4]{NgWou} (see also \cite%
{Douanla, Douanla2} for some periodic versions of this lemma and their
proofs).
\end{proof}

For $u\in \mathcal{B}_{AP}^{p}(\mathbb{R}_{\tau })$ we denote by $\overline{%
\partial }/\partial \tau $ the temporal derivative defined exactly as its
spatial counterpart $\overline{\partial }/\partial y_{i}$. We also put $%
\partial _{0}=\mathcal{G}_{1}(\overline{\partial }/\partial \tau )$. $%
\overline{\partial }/\partial \tau $ and $\partial _{0}$ enjoy the same
properties as $\overline{\partial }/\partial y_{i}$ (see Section 2). In
particular, they are skew adjoint. Now, let us view $\rho \overline{\partial 
}/\partial \tau $ as an unbounded operator defined from $\mathcal{U}=%
\mathcal{B}_{AP}^{p}(\mathbb{R}_{\tau };\mathcal{B}_{\#AP}^{1,p}(\mathbb{R}%
_{y}^{N}))$ into $\mathcal{U}^{\prime }=\mathcal{B}_{AP}^{p^{\prime }}(%
\mathbb{R}_{\tau };[\mathcal{B}_{\#AP}^{1,p}(\mathbb{R}_{y}^{N})]^{\prime })$%
. Similar to \cite[pp. 1243-1244]{EfendievPankov}, it gives rise to an
unbounded operator still denoted by $\rho \overline{\partial }/\partial \tau 
$ with the following properties:

\begin{itemize}
\item[(P)$_{1}$] The domain of $\rho \overline{\partial }/\partial \tau $ is 
$\mathcal{W}=\left\{ v\in \mathcal{U}:\rho \overline{\partial }v/\partial
\tau \in \mathcal{U}^{\prime }\right\} $;

\item[(P)$_{2}$] $\rho \overline{\partial }/\partial \tau $ is skew adjoint,
that is, for all $u,v\in \mathcal{W}$, 
\begin{equation*}
\left[ \rho \frac{\overline{\partial }v}{\partial \tau },u\right] =-\left[
\rho \frac{\overline{\partial }u}{\partial \tau },v\right] .
\end{equation*}

\item[(P)$_{3}$] The space $\mathcal{E}=\mathcal{D}_{AP}(\mathbb{R}_{\tau
})\otimes \mathcal{D}_{AP,\rho }(\mathbb{R}_{y}^{N})$ is dense in $\mathcal{W%
}$.
\end{itemize}

The above operator will be useful in the homogenization process. This being
so, the preceding lemma has an important corollary.

\begin{corollary}[{\protect\cite[Corollary 1]{ApplAnal}}]
\label{c5.1}Let the hypotheses be those of Lemma \emph{\ref{l5.1}}. Assume
moreover that $u_{1}\in \mathcal{W}$. Then, as $E^{\prime }\ni \varepsilon
\rightarrow 0$,%
\begin{equation*}
\int_{Q_{T}}\varepsilon u_{\varepsilon }\rho ^{\varepsilon }\frac{\partial
\psi _{1}^{\varepsilon }}{\partial t}dxdt\rightarrow -\int_{Q_{T}}\left[
\rho \frac{\overline{\partial }u_{1}}{\partial \tau }(x,t),\psi (x,t)\right]
dxdt.
\end{equation*}
\end{corollary}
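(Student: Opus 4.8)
The plan is to reduce the statement to Lemma \ref{l5.1} by an integration by parts in the time variable $t$ together with a change of the scaling factor. First I would compute $\frac{\partial \psi_1^{\varepsilon}}{\partial t}$ explicitly via the chain rule: since $\psi_1^{\varepsilon}(x,t)=\psi_1\!\left(x,t,\frac{x}{\varepsilon},\frac{t}{\varepsilon^2}\right)$, one has
\[
\frac{\partial \psi_1^{\varepsilon}}{\partial t}(x,t)=\left(\frac{\partial \psi_1}{\partial t}\right)^{\!\varepsilon}(x,t)+\frac{1}{\varepsilon^2}\left(\frac{\partial \psi_1}{\partial \tau}\right)^{\!\varepsilon}(x,t).
\]
Multiplying by $\varepsilon\, u_{\varepsilon}\rho^{\varepsilon}$ and integrating over $Q_T$ splits the integral into two pieces: an $O(\varepsilon)$ piece involving $\left(\partial\psi_1/\partial t\right)^{\varepsilon}$, which is bounded because $(u_{\varepsilon})$ is bounded in $L^2(Q_T)^N$ (Proposition \ref{p2.2} or Lemma \ref{l2.1}), $\rho$ is bounded (A3), and $\partial\psi_1/\partial t$ is an admissible test function; this piece therefore tends to $0$. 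The remaining piece is
\[
\int_{Q_T}\frac{1}{\varepsilon}\,u_{\varepsilon}\rho^{\varepsilon}\left(\frac{\partial \psi_1}{\partial \tau}\right)^{\!\varepsilon}dx\,dt,
\]
which is precisely of the form treated in Lemma \ref{l5.1}, with $\psi_1$ replaced by $\partial\psi_1/\partial\tau$. Hence it converges to $\int_{Q_T}\bigl[\rho\, u_1(x,t),\widetilde{\psi}(x,t)\bigr]dx\,dt$, where $\widetilde{\psi}$ is the element of $\mathcal{D}_{AP}(\mathbb{R}_{\tau})\otimes\mathcal{D}_{AP,\rho}(\mathbb{R}_y^N)$ whose representative is $\partial\psi_1/\partial\tau$; by Lemma \ref{l3.1} and Proposition \ref{p3.2} this is just $\overline{\partial}\psi/\partial\tau$.

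It remains to identify $\int_{Q_T}\bigl[\rho\, u_1(x,t),\frac{\overline{\partial}\psi}{\partial\tau}(x,t)\bigr]dx\,dt$ with $-\int_{Q_T}\bigl[\rho\frac{\overline{\partial}u_1}{\partial\tau}(x,t),\psi(x,t)\bigr]dx\,dt$. This is exactly the skew-adjointness property (P)$_2$ of the operator $\rho\,\overline{\partial}/\partial\tau$ on $\mathcal{W}$, which we may invoke because the hypothesis $u_1\in\mathcal{W}$ guarantees that $\rho\,\overline{\partial}u_1/\partial\tau\in\mathcal{U}'$ and the pairing $[\,\cdot\,,\,\cdot\,]$ makes sense; one should note that $\psi\in\mathcal{E}$, which is dense in $\mathcal{W}$ by (P)$_3$, so the duality bracket against $\psi$ coincides with the action of the bilinear form appearing in (P)$_2$. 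For fixed $(x,t)\in Q_T$ one applies (P)$_2$ with the two arguments $u_1(x,t)$ and $\psi(x,t)$, and then integrates the resulting identity over $Q_T$; the integrability needed to pass from the pointwise identity to the integrated one follows from $u_1\in L^p(Q_T;\mathcal{U})$, $\rho\,\overline{\partial}u_1/\partial\tau\in L^{p'}(Q_T;\mathcal{U}')$ (a consequence of $u_1\in\mathcal{W}$ read fibrewise in $(x,t)$), and the smoothness and compact support of $\psi$ in $(x,t)$.

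The main obstacle is the bookkeeping in the second paragraph: one must be careful that the object produced by Lemma \ref{l5.1} with test function $\partial\psi_1/\partial\tau$ is genuinely the pairing $[\rho u_1,\overline{\partial}\psi/\partial\tau]$ and not some differently-normalized bracket, and that the time-derivative $\overline{\partial}/\partial\tau$ commutes appropriately with the mean-value-zero constraint defining $\mathcal{D}_{AP,\rho}(\mathbb{R}_y^N)$ — indeed $\partial\psi_1/\partial\tau$ still has mean value zero against $\rho$ in $y$ because differentiation in $\tau$ commutes with $\mathfrak{M}_y$. Once this identification is secured, the passage from $[\rho u_1,\overline{\partial}\psi/\partial\tau]$ to $-[\rho\,\overline{\partial}u_1/\partial\tau,\psi]$ is a direct citation of (P)$_2$, and the $O(\varepsilon)$ error term is estimated trivially, so there is no further analytic difficulty.
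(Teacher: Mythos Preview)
Your argument is correct and is the natural proof: split $\varepsilon\,\partial\psi_1^{\varepsilon}/\partial t$ via the chain rule into an $O(\varepsilon)$ term plus $\frac{1}{\varepsilon}(\partial\psi_1/\partial\tau)^{\varepsilon}$, apply Lemma~\ref{l5.1} to the latter (noting that $\partial\psi_1/\partial\tau$ still lies in $\mathcal{C}_0^{\infty}(Q_T)\otimes[AP^{\infty}(\mathbb{R}_\tau)\otimes AP_\rho^{\infty}(\mathbb{R}_y^N)]$ since $\mathfrak{M}_y$ commutes with $\partial/\partial\tau$), and then invoke the skew-adjointness (P)$_2$ together with the hypothesis $u_1\in\mathcal{W}$ to pass from $[\rho u_1,\overline{\partial}\psi/\partial\tau]$ to $-[\rho\,\overline{\partial}u_1/\partial\tau,\psi]$. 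The paper does not supply its own proof here but simply cites \cite[Corollary~1]{ApplAnal}; your argument is precisely the expected derivation behind that citation, so there is nothing to contrast.
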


Now assume that $\boldsymbol{u}_{1}=(u_{1}^{i})\in (\mathcal{B}%
_{\#AP}^{1,p})^{N}$ and $\boldsymbol{\psi }=(\psi ^{i})\in (\mathcal{D}_{AP}(%
\mathbb{R}_{\tau })\otimes \mathcal{D}_{AP,\rho }(\mathbb{R}_{y}^{N}))^{N}$.
Then we may still define $[\rho \frac{\overline{\partial }\boldsymbol{u}_{1}%
}{\partial \tau }(x,t),\boldsymbol{\psi }(x,t)]$, but this time as $%
\sum_{i=1}^{N}[\rho \frac{\overline{\partial }u_{1}^{i}}{\partial \tau }%
(x,t),\psi ^{i}(x,t)]$. We will use that notation in the sequel.

We end this subsection with one further result.

\begin{lemma}[{\protect\cite[Lemma 5]{ApplAnal}}]
\label{l4.4}Let $h:\mathbb{R}\times \mathbb{R}^{N}\rightarrow \mathbb{R}^{N}$
be a continuous function verifying the following conditions:

\begin{itemize}
\item[(i)] $\left\vert h(\tau ,r_{1})-h(\tau ,r_{2})\right\vert \leq
k\left\vert r_{1}-r_{2}\right\vert $ for any $\tau \in \mathbb{R}$ $\ $and
all $r_{1}$, $r_{2}\in \mathbb{R}^{N}$;

\item[(ii)] $h(\cdot ,r)\in AP(\mathbb{R})$ for all $r\in \mathbb{R}^{N}$.
\end{itemize}

\noindent Let $(\boldsymbol{u}_{\varepsilon })_{\varepsilon }$ be a sequence
in $L^{2}(Q_{T})^{N}$ such that $\boldsymbol{u}_{\varepsilon }\rightarrow 
\boldsymbol{u}_{0}$ in $L^{2}(Q_{T})^{N}$ as $\varepsilon \rightarrow 0$,
where $\boldsymbol{u}_{0}\in L^{2}(Q_{T})^{N}$. Then, setting $%
h^{\varepsilon }(\boldsymbol{u}_{\varepsilon })(x,t)=h(t/\varepsilon ^{2},%
\boldsymbol{u}_{\varepsilon }(x,t))$ we have, as $\varepsilon \rightarrow 0$%
, 
\begin{equation*}
h^{\varepsilon }(\boldsymbol{u}_{\varepsilon })\rightarrow h(\cdot ,%
\boldsymbol{u}_{0})\text{ in }L^{2}(Q_{T})^{N}\text{-weak }\Sigma \text{.}
\end{equation*}
\end{lemma}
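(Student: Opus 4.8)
The plan is to reduce the $\Sigma$-convergence statement for $h^{\varepsilon}(\boldsymbol{u}_{\varepsilon})$ to the convergence of two pieces: an ``oscillating'' piece carrying the almost periodic dependence in $\tau$, for which we already know the weak$^{*}$ behavior, and a ``strongly convergent'' piece coming from $\boldsymbol{u}_{\varepsilon}\to\boldsymbol{u}_{0}$ in $L^{2}(Q_{T})^{N}$. Concretely, to establish weak $\Sigma$-convergence we must show, for every test function $\varphi\in L^{2}(Q_{T};A_{\tau})$ (here the oscillation is only in $\tau$, so the relevant algebra is $A_{\tau}=AP(\mathbb{R}_{\tau})$ and its spectrum $\mathcal{K}_{\tau}$),
\begin{equation*}
\int_{Q_{T}}h\!\left(\tfrac{t}{\varepsilon^{2}},\boldsymbol{u}_{\varepsilon}(x,t)\right)\cdot\varphi\!\left(x,t,\tfrac{t}{\varepsilon^{2}}\right)dxdt\longrightarrow\iint_{Q_{T}\times\mathcal{K}_{\tau}}\widehat{h}(s_{0},\boldsymbol{u}_{0}(x,t))\cdot\widehat{\varphi}(x,t,s_{0})\,dxdt\,d\beta_{\tau},
\end{equation*}
where $\widehat{h}(\cdot,r)=\mathcal{G}(h(\cdot,r))$. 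I would first reduce to test functions of the simple product form $\varphi(x,t,\tau)=\phi(x,t)g(\tau)$ with $\phi\in\mathcal{C}_{0}^{\infty}(Q_{T})$ and $g\in AP(\mathbb{R}_{\tau})$ (finite linear combinations of these are dense), and even further to $g$ a trigonometric polynomial, so that ultimately it suffices to treat $g=\gamma_{k}$, $\gamma_{k}(\tau)=e^{ik\tau}$.

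With that reduction in hand, the main step is the decomposition
\begin{equation*}
h\!\left(\tfrac{t}{\varepsilon^{2}},\boldsymbol{u}_{\varepsilon}(x,t)\right)=h\!\left(\tfrac{t}{\varepsilon^{2}},\boldsymbol{u}_{0}(x,t)\right)+\left[h\!\left(\tfrac{t}{\varepsilon^{2}},\boldsymbol{u}_{\varepsilon}(x,t)\right)-h\!\left(\tfrac{t}{\varepsilon^{2}},\boldsymbol{u}_{0}(x,t)\right)\right].
\end{equation*}
For the second bracket, the Lipschitz hypothesis (i) gives the pointwise bound $k\,|\boldsymbol{u}_{\varepsilon}-\boldsymbol{u}_{0}|$, hence this term tends to $0$ in $L^{2}(Q_{T})^{N}$ strongly; multiplied against the bounded test function $\phi^{\varepsilon}g^{\varepsilon}$ (note $|g^{\varepsilon}|\le\|g\|_{\infty}$), its contribution vanishes in the limit. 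For the first term, $\boldsymbol{u}_{0}$ is fixed, so we are left with the genuinely almost-periodic-homogenization fact that $(x,t)\mapsto h(t/\varepsilon^{2},\boldsymbol{u}_{0}(x,t))$ weakly $\Sigma$-converges to $(x,t,\tau)\mapsto h(\tau,\boldsymbol{u}_{0}(x,t))$. This follows from property (P)$_{2}$ of $AP(\mathbb{R}_{\tau})$ applied with the scaling $\tau=t/\varepsilon^{2}$ (so $h(t/\varepsilon^{2},r)\rightharpoonup\mathfrak{M}_{\tau}(h(\cdot,r))$ weakly$^{*}$ in $L^{\infty}$ for each fixed $r$), combined with a standard approximation of $\boldsymbol{u}_{0}$ by simple functions and continuity of $r\mapsto h(\cdot,r)$ into $AP(\mathbb{R}_{\tau})$ (which the Lipschitz bound makes uniform). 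Measurability of $(x,t)\mapsto h(\cdot,\boldsymbol{u}_{0}(x,t))$ as an $AP(\mathbb{R}_{\tau})$-valued, hence $\mathcal{C}(\mathcal{K}_{\tau})$-valued, function is needed to make sense of the right-hand side; this is where one uses that the Nemytskii-type map $r\mapsto\widehat{h}(\cdot,r)\in\mathcal{C}(\mathcal{K}_{\tau})$ is continuous and $\boldsymbol{u}_{0}$ is measurable.

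The step I expect to be the main obstacle is the first term: justifying that $h(t/\varepsilon^{2},\boldsymbol{u}_{0}(x,t))$ weakly $\Sigma$-converges to $h(\tau,\boldsymbol{u}_{0}(x,t))$ uniformly enough to pass to the limit, i.e. the interchange of the $\varepsilon\to0$ limit with the $(x,t)$-integration when the ``profile'' $h(\cdot,\boldsymbol{u}_{0}(x,t))$ varies with $(x,t)$. The clean way to handle it is: approximate $\boldsymbol{u}_{0}$ in $L^{2}(Q_{T})^{N}$ by simple functions $\boldsymbol{u}_{0}^{m}=\sum_{\ell}r_{\ell}\mathbf{1}_{E_{\ell}}$; for each simple function the claim is immediate from (P)$_{2}$ applied on each $E_{\ell}$; the error between $h(\cdot,\boldsymbol{u}_{0})$ and $h(\cdot,\boldsymbol{u}_{0}^{m})$ is controlled in $L^{2}$ by $k\|\boldsymbol{u}_{0}-\boldsymbol{u}_{0}^{m}\|_{L^{2}(Q_{T})^{N}}$ both at the $\varepsilon$-level and at the limit (using property (\textbf{2}) of the mean value, which is $L^{2}$-continuous), so a standard $3\varepsilon$-argument closes the gap. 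Finally, since the bracket term was shown to be negligible, we conclude that the full sequence $h^{\varepsilon}(\boldsymbol{u}_{\varepsilon})$ weakly $\Sigma$-converges to $h(\cdot,\boldsymbol{u}_{0})$, which is exactly the assertion (with the understanding that the only oscillating variable is $\tau$, so $\widehat{h}(\cdot,\boldsymbol{u}_{0})$ is regarded as an element of $L^{2}(Q_{T};\mathcal{C}(\mathcal{K}_{\tau}))\subset L^{2}(Q_{T};\mathcal{B}_{AP}^{2}(\mathbb{R}_{y,\tau}^{N+1}))$, constant in $y$).
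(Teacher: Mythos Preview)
The paper does not give its own proof of this lemma; it is quoted verbatim from \cite[Lemma 5]{ApplAnal}, so there is no argument in the paper to compare against. Your sketch is essentially correct and is the natural route: split $h(t/\varepsilon^{2},\boldsymbol{u}_{\varepsilon})$ into $h(t/\varepsilon^{2},\boldsymbol{u}_{0})$ plus a Lipschitz-controlled remainder, kill the remainder with the strong $L^{2}$ convergence $\boldsymbol{u}_{\varepsilon}\to\boldsymbol{u}_{0}$, and handle the first piece by approximating $\boldsymbol{u}_{0}$ with simple functions and invoking the weak$^{*}$ convergence of almost periodic traces.

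One point deserves a small correction. You restrict the test functions to $\varphi\in L^{2}(Q_{T};A_{\tau})$, arguing that ``the oscillation is only in $\tau$''. That is true of $h^{\varepsilon}(\boldsymbol{u}_{\varepsilon})$, but weak $\Sigma$-convergence in this paper is defined (Definition~\ref{d3.1}) against the full algebra $A=AP(\mathbb{R}_{y,\tau}^{N+1})$, and the way the lemma is actually used in the proof of Theorem~\ref{t6.3} is precisely with a $y$-oscillating test function, namely $\rho^{\varepsilon}\boldsymbol{\psi}_{0}=\rho(x/\varepsilon)\boldsymbol{\psi}_{0}(x,t)$. So you must allow test functions of the form $\phi(x,t)g_{1}(y)g_{2}(\tau)$ (whose span is dense in $L^{2}(Q_{T};A)$). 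This costs nothing: the Lipschitz remainder is still multiplied by a function bounded by $\|\phi\|_{\infty}\|g_{1}\|_{\infty}\|g_{2}\|_{\infty}$ and vanishes in the limit; for the main term with simple $\boldsymbol{u}_{0}^{m}=\sum_{\ell}r_{\ell}\mathbf{1}_{E_{\ell}}$ one uses that $g_{1}(x/\varepsilon)$ and $h(t/\varepsilon^{2},r_{\ell})g_{2}(t/\varepsilon^{2})$ act in separate variables, so their product weak$^{*}$-converges in $L^{\infty}(Q_{T})$ to $\mathfrak{M}_{y}(g_{1})\,\mathfrak{M}_{\tau}(h(\cdot,r_{\ell})g_{2})$ (test first against separable $L^{1}$ functions and conclude by density). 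The $3\varepsilon$ argument then closes exactly as you wrote, and the limit $h(\cdot,\boldsymbol{u}_{0})$, constant in $y$, integrates against $\widehat{g}_{1}$ to produce the factor $\mathfrak{M}_{y}(g_{1})$ on the right-hand side, matching the left. With this adjustment your proof is complete.
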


\subsection{Homogenization results}

Before we can state the homogenization result for (\ref{0.1}) we need a few
notations. We begin by noting that the space $\mathcal{C}_{0}^{\infty
}(0,T)\otimes \mathcal{V}$ is dense in $L^{p}(0,T;V)$. Next, we introduce
the space 
\begin{equation*}
\mathcal{B}_{\Div}^{1,p}=\{\boldsymbol{u}\in (\mathcal{B}_{\#AP}^{1,p}(%
\mathbb{R}^{N}))^{N}:~\overline{\Div}_{y}\boldsymbol{u}=0\}
\end{equation*}%
where $\overline{\Div}_{y}\boldsymbol{u}=\sum_{i=1}^{N}\overline{\partial }%
u^{i}/\partial y_{i}$, and its smooth counterpart 
\begin{eqnarray*}
\mathcal{V}_{\Div} &=&\{\boldsymbol{u}\in (\mathcal{D}_{AP,\rho }(\mathbb{R}%
^{N}))^{N}:~\overline{\Div}_{y}\boldsymbol{u}=0\} \\
&\equiv &\{\boldsymbol{u}\in (\mathcal{D}_{AP,\rho }(\mathbb{R}^{N}))^{N}:%
\mathfrak{M}(\rho \boldsymbol{u})=0\text{ and }\overline{\Div}_{y}%
\boldsymbol{u}=0\}.
\end{eqnarray*}%
The following result holds.

\begin{lemma}
\label{l6.2}The space $\mathcal{V}_{\Div}$ is dense in $\mathcal{B}_{\Div%
}^{1,p}$.
\end{lemma}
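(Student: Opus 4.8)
The plan is to reduce the divergence-free density claim to the known density of $\mathcal{D}_{AP,\rho}(\mathbb{R}^{N})$ in $\mathcal{B}_{\#AP}^{1,p}(\mathbb{R}^{N})$ (property (P$_2$) recorded just before the $\Sigma$-convergence subsection) by solving an auxiliary ``cell'' Stokes-type problem in the completion space. Concretely, fix $\boldsymbol{u}\in\mathcal{B}_{\Div}^{1,p}$. Since $\mathcal{D}_{AP,\rho}(\mathbb{R}^{N})$ is dense in $\mathcal{B}_{\#AP}^{1,p}(\mathbb{R}^{N})$, choose a sequence $\boldsymbol{w}_{k}\in(\mathcal{D}_{AP,\rho}(\mathbb{R}^{N}))^{N}$ with $\boldsymbol{w}_{k}\to\boldsymbol{u}$ in $(\mathcal{B}_{\#AP}^{1,p}(\mathbb{R}^{N}))^{N}$; then $\overline{\Div}_{y}\boldsymbol{w}_{k}\to\overline{\Div}_{y}\boldsymbol{u}=0$ in $\mathcal{B}_{AP}^{p}(\mathbb{R}^{N})$, but $g_{k}:=\overline{\Div}_{y}\boldsymbol{w}_{k}$ need not vanish. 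The key step is to correct $\boldsymbol{w}_{k}$ by a potential: I want $\boldsymbol{z}_{k}\in(\mathcal{D}_{AP,\rho}(\mathbb{R}^{N}))^{N}$ with $\overline{\Div}_{y}\boldsymbol{z}_{k}=g_{k}$ and $\|\boldsymbol{z}_{k}\|_{\#,p}\le C\|g_{k}\|_{p}$, after which $\boldsymbol{w}_{k}-\boldsymbol{z}_{k}\in\mathcal{V}_{\Div}$ and converges to $\boldsymbol{u}$.

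To build $\boldsymbol{z}_{k}$ I would use the Laplacian $\overline{\Delta}_{y}$ on $\mathcal{B}_{AP}^{p}(\mathbb{R}^{N})$ introduced in the excerpt. Because $g_{k}\in AP_{\rho}^{\infty}(\mathbb{R}^{N})$ — note $\mathfrak{M}(\rho\,\overline{\Div}_{y}\boldsymbol{w}_{k})$ need not be zero, so one should actually work with $\tilde g_{k}=g_{k}-\mathfrak{M}(g_{k})$, observe $\mathfrak{M}(g_{k})=\mathfrak{M}(\overline{\Div}_{y}\boldsymbol{w}_{k})=0$ by part (ii) of Proposition~\ref{p3.2}, so in fact $\mathfrak{M}(g_{k})=0$ already — there exists (by the standard solvability of $\Delta_{y}\phi_{k}=g_{k}$ for mean-zero $g_{k}$ in $AP^{\infty}(\mathbb{R}^{N})$, cf.\ \cite{Jikov} as invoked in the proof of Lemma~\ref{l5.1}) a unique $\phi_{k}\in AP^{\infty}(\mathbb{R}^{N})$ with $\Delta_{y}\phi_{k}=g_{k}$ and $\mathfrak{M}(\phi_{k})=0$. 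Set $\boldsymbol{z}_{k}=\varrho_{y}(\nabla_{y}\phi_{k})$, after subtracting the appropriate constant so that $\mathfrak{M}(\rho\,\boldsymbol{z}_{k})=0$, i.e.\ work in $\mathcal{D}_{AP,\rho}$ by replacing each component by its $\rho$-mean-zero representative; then $\overline{\Div}_{y}\boldsymbol{z}_{k}=\varrho_{y}(\Delta_{y}\phi_{k})=g_{k}$ by Lemma~\ref{l3.1}(1) and Proposition~\ref{p3.5}. The quantitative bound $\|D_{y}(\nabla_{y}\phi_{k})\|_{p}\le C\|g_{k}\|_{p}$ is an almost periodic Calderón–Zygmund / second-order elliptic estimate; it transfers from the $L^{p}(\mathcal{K})$ setting via the isometric isomorphism $\mathcal{G}_{1}$ between $\mathcal{B}_{AP}^{1,p}(\mathbb{R}^{N})$ and $W^{1,p}(\mathcal{K})$, using the spectral representation of $\Delta_{2}$ on $\mathcal{K}$ together with an $L^{p}$ multiplier argument (or equivalently one may avoid $p\ne2$ multiplier theory by working with the primitive directly: $\boldsymbol{z}_{k}$ is a first-order gradient of $\phi_{k}$ so $D_{y}\boldsymbol{z}_{k}$ involves $D_{y}^{2}\phi_{k}$, whose $L^{p}$ norm is controlled by $\|\Delta_y\phi_k\|_p=\|g_k\|_p$ by elliptic regularity in the algebra with mean value).

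Granting this correction, $\boldsymbol{v}_{k}:=\boldsymbol{w}_{k}-\boldsymbol{z}_{k}$ lies in $(\mathcal{D}_{AP,\rho}(\mathbb{R}^{N}))^{N}$ with $\overline{\Div}_{y}\boldsymbol{v}_{k}=g_{k}-g_{k}=0$, hence $\boldsymbol{v}_{k}\in\mathcal{V}_{\Div}$, and
\[
\|\boldsymbol{v}_{k}-\boldsymbol{u}\|_{\#,p}\le\|\boldsymbol{w}_{k}-\boldsymbol{u}\|_{\#,p}+C\|g_{k}\|_{p}\longrightarrow0,
\]
since both terms on the right go to zero. This proves $\mathcal{V}_{\Div}$ is dense in $\mathcal{B}_{\Div}^{1,p}$. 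The main obstacle is the quantitative solvability estimate for the cell problem $\overline{\Div}_{y}\boldsymbol{z}=g$ with control $\|\boldsymbol{z}\|_{\#,p}\lesssim\|g\|_{p}$ in the seminormed Besicovitch setting for general $p\ne2$: this requires an almost periodic Bogovskii-type operator or an $L^{p}$-boundedness statement for the Leray projection / Riesz transforms on the compact group $\mathcal{K}$, which is where the real work lies; for $p=2$ it is immediate from orthogonality and the spectral calculus, and for general $p$ one should cite the appropriate harmonic analysis on $b\mathbb{R}^{N}$ or argue via the isomorphism with $L^{p}(\mathcal{K})$ and Mikhlin-type multiplier theorems for the dual group.
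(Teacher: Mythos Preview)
The paper does not actually give a proof: it simply cites \cite[Lemma 2.3]{Wright1} and says the argument is identical. So there is no detailed argument to compare against; your Leray--projection/divergence--correction scheme is a standard and reasonable way to attack such a density statement, and it may well be close in spirit to Wright's argument.

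That said, your plan has one genuine gap beyond the $L^{p}$ multiplier issue you already flag. You assert that for mean-zero $g_{k}\in AP^{\infty}(\mathbb{R}^{N})$ the Poisson equation $\Delta_{y}\phi_{k}=g_{k}$ has a solution $\phi_{k}\in AP^{\infty}(\mathbb{R}^{N})$, invoking the reference used in Lemma~\ref{l5.1}. This is \emph{not} true in general: it is exactly the small-divisor obstruction in almost periodic homogenization. Even the paper's own use of \cite{Jikov} in Lemma~\ref{l5.1} only produces $\phi\in AP(\mathbb{R}_{y}^{N})$, not $AP^{\infty}$, so you cannot conclude that $\boldsymbol{z}_{k}=\nabla_{y}\phi_{k}$ lands back in $(\mathcal{D}_{AP,\rho}(\mathbb{R}^{N}))^{N}$. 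The clean fix is to take the approximating sequence $\boldsymbol{w}_{k}$ in $(\mathrm{Trig}(\mathbb{R}^{N}))^{N}$ (after subtracting constants to enforce $\mathfrak{M}(\rho\,\boldsymbol{w}_{k})=0$); trigonometric polynomials are dense in $\mathcal{B}_{\#AP}^{1,p}$, the divergence $g_{k}$ is then a finite trigonometric sum with zero mean, and $\phi_{k}=-\sum a_{\xi}\lvert\xi\rvert^{-2}e^{i\xi\cdot y}$ is again a trigonometric polynomial, so $\boldsymbol{z}_{k}=\nabla_{y}\phi_{k}\in(\mathcal{D}_{AP,\rho}(\mathbb{R}^{N}))^{N}$ without any small-divisor issue.

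With that modification the only remaining point is the one you correctly isolate: the uniform bound $\lVert D_{y}^{2}\phi_{k}\rVert_{p}\le C\lVert g_{k}\rVert_{p}$, i.e.\ $L^{p}$-boundedness of the Riesz transforms on $\mathcal{B}_{AP}^{p}(\mathbb{R}^{N})$. For $p=2$ this is immediate from the multiplier bound $\lvert\xi_{i}\xi_{j}\rvert/\lvert\xi\rvert^{2}\le1$; for $1<p<\infty$ it follows by transference from the classical Calder\'on--Zygmund bound on $L^{p}(\mathbb{R}^{N})$ (restrict to large cubes, use the uniform $L^{p}(\mathbb{R}^{N})$ estimate, and pass to the Besicovitch limit), or equivalently by the isometry $\mathcal{B}_{AP}^{p}\cong L^{p}(\mathcal{K})$ together with the transference principle for multipliers on compact abelian groups. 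This step is where the real analytic content lies, and you should state and cite it precisely rather than leave it as a parenthetical.
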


\begin{proof}
This follows exactly in a same way as the proof of \cite[Lemma 2.3]{Wright1}.
\end{proof}

Now, let 
\begin{equation*}
\mathcal{W}_{\Div}=\left\{ u\in \mathcal{B}_{AP}^{p}(\mathbb{R}_{\tau };%
\mathcal{B}_{\Div}^{1,p}):\rho \frac{\partial u}{\partial \tau }\in \mathcal{%
B}_{AP}^{p^{\prime }}(\mathbb{R}_{\tau };(\mathcal{B}_{\Div}^{1,p})^{\prime
})\right\};
\end{equation*}%
We set 
\begin{equation*}
\mathbb{F}_{0}^{1,p}=L^{p}(0,T;V)\times L^{p}(Q_{T};\mathcal{W}_{\Div})
\end{equation*}%
and 
\begin{equation*}
\mathcal{F}_{0}^{\infty }=[\mathcal{C}_{0}^{\infty }(0,T)\otimes \mathcal{V}%
]\times \left[ \mathcal{C}_{0}^{\infty }(Q_{T})\otimes \left( \mathcal{D}%
_{AP}(\mathbb{R}_{\tau })\otimes \mathcal{V}_{\Div}\right) \right] .
\end{equation*}%
Thanks to Lemma \ref{l6.2} and to the density of $\mathcal{E}=\mathcal{D}%
_{AP}(\mathbb{R}_{\tau })\otimes \mathcal{D}_{AP,\rho }(\mathbb{R}_{y}^{N})$
in $\mathcal{W}$, we have the density of $\mathcal{F}_{0}^{\infty }$ in $%
\mathbb{F}_{0}^{1,p}$.

Let $(\boldsymbol{u}_{\varepsilon })_{\varepsilon \in E}$ be a sequence of
solution to (\ref{0.1}) (or to (\ref{0.8}))where we assume that $E$ is an
ordinary sequence tending to zero with $\varepsilon $. In view of
Proposition \ref{p2.2}, there are a subsequence $E^{\prime }$ of $E$ and a
function $\boldsymbol{u}_{0}\in L^{2}(Q_{T})^{N}$ such that, as $E^{\prime
}\ni \varepsilon \rightarrow 0$ 
\begin{equation}
\boldsymbol{u}_{\varepsilon }\rightarrow \boldsymbol{u}_{0}\text{ in }%
L^{2}(Q_{T})^{N}\text{.}  \label{6.23}
\end{equation}%
In view of (\ref{2.2}) and by the diagonal process, one can find a
subsequence of $(\boldsymbol{u}_{\varepsilon })_{\varepsilon \in E^{\prime
}} $ (not relabeled) which weakly converges in $L^{p}(0,T;V)$ to the
function $\boldsymbol{u}_{0}$ (this means that $\boldsymbol{u}_{0}\in
L^{p}(0,T;V)$). From Theorem \ref{t3.2}, we infer the existence of a
function $\boldsymbol{u}_{1}=(u_{1}^{k})_{1\leq k\leq N}\in L^{p}(Q_{T};%
\mathcal{B}_{AP}^{p}(\mathbb{R}_{\tau };\mathcal{B}_{\#AP}^{1,p})^{N})$ such
that the convergence result 
\begin{equation}
\frac{\partial \boldsymbol{u}_{\varepsilon }}{\partial x_{i}}\rightarrow 
\frac{\partial \boldsymbol{u}_{0}}{\partial x_{i}}+\frac{\overline{\partial }%
\boldsymbol{u}_{1}}{\partial y_{i}}\text{ in }L^{p}(Q_{T})^{N}\text{-weak }%
\Sigma \text{ }(1\leq i\leq N)  \label{6.24}
\end{equation}%
holds when $E^{\prime }\ni \varepsilon \rightarrow 0$. Owing to Lemma \ref%
{l2.0'} (see (\ref{2.15}) therein) there exist a subsequence of $E^{\prime }$
(still denoted by $E^{\prime }$) and a function $q\in L^{p^{\prime }}(Q_{T};%
\mathcal{B}_{AP}^{p^{\prime }}(\mathbb{R}^{N+1}))$ with $\int_{Q}qdx=0$ such
that 
\begin{equation}
q_{\varepsilon }\rightarrow q\text{ in }L^{p^{\prime }}(Q_{T})\text{-weak }%
\Sigma \text{ as }E^{\prime }\ni \varepsilon \rightarrow 0.  \label{6.25}
\end{equation}%
We recall that $\frac{\partial \boldsymbol{u}_{0}}{\partial x_{i}}=\left( 
\frac{\partial u_{0}^{k}}{\partial x_{i}}\right) _{1\leq k\leq N}$ ($%
\boldsymbol{u}_{0}=(u_{0}^{k})_{1\leq k\leq N}$) and $\frac{\overline{%
\partial }\boldsymbol{u}_{1}}{\partial y_{i}}=\left( \frac{\overline{%
\partial }u_{1}^{k}}{\partial y_{i}}\right) _{1\leq k\leq N}$. Now, let us
consider the following functionals: 
\begin{eqnarray*}
\widehat{a}_{I}(\boldsymbol{u},\boldsymbol{v})
&=&\sum_{i,j,k=1}^{N}\iint_{Q_{T}\times \mathcal{K}}\widehat{a}_{ij}(s,s_{0})%
\mathbb{D}_{j}u^{k}\mathbb{D}_{i}v^{k}dxdtd\beta \\
&&\ \ \ +\sum_{i,j=1}^{N}\iint_{Q_{T}\times \mathcal{K}}\widehat{b}%
(s,s_{0})\left\vert \mathbb{D}\boldsymbol{u}\right\vert ^{p-2}\mathbb{D}%
_{i}u^{j}\mathbb{D}_{i}v^{j}dxdtd\beta
\end{eqnarray*}%
where $\mathbb{D}_{j}u^{k}=\frac{\partial u_{0}^{k}}{\partial x_{j}}%
+\partial _{j}\widehat{u}_{1}^{k}$ ($\partial _{j}\widehat{u}_{1}^{k}=%
\mathcal{G}_{1}\left( \frac{\overline{\partial }u_{1}^{k}}{\partial y_{j}}%
\right) $, and the same definition for $\mathbb{D}_{i}v^{k}$) and $\mathbb{D}%
\boldsymbol{u}=(\mathbb{D}_{j}\boldsymbol{u})_{1\leq j\leq N}$ with $\mathbb{%
D}_{j}\boldsymbol{u}=(\mathbb{D}_{j}u^{k})_{1\leq k\leq N}$; 
\begin{equation*}
\widehat{b}_{I}(\boldsymbol{u},\boldsymbol{v},\boldsymbol{w}%
)=\sum_{i,k=1}^{N}\iint_{Q_{T}}u_{0}^{i}\frac{\partial v_{0}^{k}}{\partial
x_{i}}w_{0}^{k}dxdt
\end{equation*}%
for $\boldsymbol{u}=(\boldsymbol{u}_{0},\boldsymbol{u}_{1}),\boldsymbol{v}=(%
\boldsymbol{v}_{0},\boldsymbol{v}_{1}),\boldsymbol{w}=(\boldsymbol{w}_{0},%
\boldsymbol{w}_{1})\in \mathbb{F}_{0}^{1,p}$. The functionals $\widehat{a}%
_{I}$ and $\widehat{b}_{I}$ are well-defined. Next, associated to these
functionals is the variational problem 
\begin{equation}
\left\{ 
\begin{array}{l}
\boldsymbol{u}=(\boldsymbol{u}_{0},\boldsymbol{u}_{1})\in \mathbb{F}%
_{0}^{1,p}: \\ 
-\mathfrak{M}_{y}(\rho )\int_{Q_{T}}\boldsymbol{u}_{0}\cdot \boldsymbol{\psi 
}_{0}^{\prime }dxdt-\int_{Q_{T}}\left[ \rho \boldsymbol{u}_{1},\frac{%
\partial \boldsymbol{\psi }_{1}}{\partial \tau }\right] dxdt+\widehat{a}_{I}(%
\boldsymbol{u},\boldsymbol{\Phi })+\widehat{b}_{I}(\boldsymbol{u},%
\boldsymbol{u},\boldsymbol{\Phi }) \\ 
\ \ \ \ =\int_{Q_{T}}\mathfrak{M}(\rho f(\cdot ,\boldsymbol{u}_{0}))\cdot 
\boldsymbol{\psi }_{0}dxdt\text{ for all }\Phi =(\boldsymbol{\psi }_{0},%
\boldsymbol{\psi }_{1})\in \mathcal{F}_{0}^{\infty }\text{.}%
\end{array}%
\right.  \label{6.27}
\end{equation}

The following \textit{global} homogenization result holds.

\begin{theorem}
\label{t6.3}The couple $(\boldsymbol{u}_{0},\boldsymbol{u}_{1})$ determined
by \emph{(\ref{6.23})-(\ref{6.24})} solves problem \emph{(\ref{6.27})}.
\end{theorem}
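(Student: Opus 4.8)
The plan is to insert suitably oscillating test functions into the weak formulation of (\ref{0.1}) along the subsequence $E'$ fixed by (\ref{6.23})--(\ref{6.25}), pass to the limit in every term, and then identify the nonlinear ($p$-Laplacian) flux by a Minty--Browder monotonicity argument; everything will rest on the a priori bounds of Lemmas~\ref{l2.1} and \ref{l2.0'} together with the $\Sigma$-convergence machinery of Section~3.

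First I would fix $\boldsymbol{\Phi}=(\boldsymbol{\psi}_0,\boldsymbol{\psi}_1)\in\mathcal{F}_0^\infty$, let $\widetilde{\boldsymbol{\psi}}_1$ be the representative of $\boldsymbol{\psi}_1$, and test (\ref{0.1}) against $\boldsymbol{\Phi}_\varepsilon(x,t)=\boldsymbol{\psi}_0(x,t)+\varepsilon\widetilde{\boldsymbol{\psi}}_1(x,t,x/\varepsilon,t/\varepsilon^2)$. Since $\boldsymbol{\psi}_0$ is divergence-free in $x$ and $\overline{\Div}_y\boldsymbol{\psi}_1=0$, one has $\Div_x\boldsymbol{\Phi}_\varepsilon=\varepsilon(\Div_x\widetilde{\boldsymbol{\psi}}_1)^\varepsilon$, so the pressure contributes $-\varepsilon\int_{Q_T}q_\varepsilon(\Div_x\widetilde{\boldsymbol{\psi}}_1)^\varepsilon\,dxdt=O(\varepsilon)$ by (\ref{2.15}), and the time integration by parts produces no boundary term because $\boldsymbol{\Phi}_\varepsilon$ has compact support in $Q\times(0,T)$. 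Using $\partial_t\boldsymbol{\Phi}_\varepsilon=\boldsymbol{\psi}_0'+\varepsilon(\partial_t\widetilde{\boldsymbol{\psi}}_1)^\varepsilon+\varepsilon^{-1}(\partial_\tau\widetilde{\boldsymbol{\psi}}_1)^\varepsilon$, the inertial term $-\int_{Q_T}\rho^\varepsilon\boldsymbol{u}_\varepsilon\cdot\partial_t\boldsymbol{\Phi}_\varepsilon$ splits into a piece converging to $-\mathfrak{M}_y(\rho)\int_{Q_T}\boldsymbol{u}_0\cdot\boldsymbol{\psi}_0'$ by (\ref{6.23}) and the mean-value property of $AP(\mathbb{R}^N)$, an $O(\varepsilon)$ piece by (\ref{2.1}), and $-\varepsilon^{-1}\int_{Q_T}\rho^\varepsilon\boldsymbol{u}_\varepsilon\cdot(\partial_\tau\widetilde{\boldsymbol{\psi}}_1)^\varepsilon\,dxdt$, which tends to $-\int_{Q_T}[\rho\boldsymbol{u}_1,\partial_\tau\boldsymbol{\psi}_1]\,dxdt$ by Lemma~\ref{l5.1} applied componentwise (note $\partial_\tau\widetilde{\boldsymbol{\psi}}_1$ still lies in the relevant tensor-product space and keeps a vanishing $\rho$-mean in $y$). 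The convective term, rewritten $-\int_{Q_T}u_\varepsilon^iu_\varepsilon^k\partial_i\Phi_\varepsilon^k\,dxdt$ via $\Div\boldsymbol{u}_\varepsilon=0$, tends to $b_I(\boldsymbol{u}_0,\boldsymbol{u}_0,\boldsymbol{\psi}_0)=\widehat{b}_I(\boldsymbol{u},\boldsymbol{u},\boldsymbol{\Phi})$, because $\boldsymbol{u}_\varepsilon\to\boldsymbol{u}_0$ in $L^2(Q_T)^N$ while the oscillating part of $\nabla\boldsymbol{\Phi}_\varepsilon$ has zero mean. The source term tends to $\int_{Q_T}\mathfrak{M}(\rho f(\cdot,\boldsymbol{u}_0))\cdot\boldsymbol{\psi}_0\,dxdt$, since $f^\varepsilon(\cdot,\boldsymbol{u}_\varepsilon)\to f(\cdot,\boldsymbol{u}_0)$ in $L^2(Q_T)^N$-weak $\Sigma$ by Lemma~\ref{l4.4} (whose hypotheses hold by (\textbf{A4})(ii) and (\textbf{A5})) and then $\rho^\varepsilon f^\varepsilon(\cdot,\boldsymbol{u}_\varepsilon)\to\rho f(\cdot,\boldsymbol{u}_0)$ weak $\Sigma$ by Corollary~\ref{c3.1}. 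Finally, in the viscous term $\nabla\boldsymbol{\Phi}_\varepsilon$ strongly $\Sigma$-converges to $\nabla_x\boldsymbol{\psi}_0+\overline{D}_y\boldsymbol{\psi}_1$, so pairing it with the weak-$\Sigma$-$L^2$ limit (\ref{6.24}) of $\nabla\boldsymbol{u}_\varepsilon$ (the $L^2$ statement being legitimate by (\ref{2.8}) and Theorem~\ref{t3.1}) through Theorem~\ref{t3.3} yields the first sum of $\widehat{a}_I(\boldsymbol{u},\boldsymbol{\Phi})$ from the linear part and, after one more extraction, $\iint_{Q_T\times\mathcal{K}}\widehat{\boldsymbol{\sigma}}:\mathbb{D}\boldsymbol{\Phi}\,dxdtd\beta$ from the $p$-Laplacian part, where $\widehat{\boldsymbol{\sigma}}$ is the weak-$\Sigma$-$L^{p'}$ limit of $b^\varepsilon|\nabla\boldsymbol{u}_\varepsilon|^{p-2}\nabla\boldsymbol{u}_\varepsilon$. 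Collecting these limits gives exactly (\ref{6.27}) except that the $p$-Laplacian part of $\widehat{a}_I(\boldsymbol{u},\boldsymbol{\Phi})$ is replaced by $\iint_{Q_T\times\mathcal{K}}\widehat{\boldsymbol{\sigma}}:\mathbb{D}\boldsymbol{\Phi}$; choosing $\boldsymbol{\psi}_0=0$ there and invoking the density of $\mathcal{F}_0^\infty$ in $\mathbb{F}_0^{1,p}$ also shows $\boldsymbol{u}_1\in L^p(Q_T;\mathcal{W}_{\Div})$, so $(\boldsymbol{u}_0,\boldsymbol{u}_1)\in\mathbb{F}_0^{1,p}$.

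It then remains to prove $\iint_{Q_T\times\mathcal{K}}\widehat{\boldsymbol{\sigma}}:\mathbb{D}\boldsymbol{\Phi}=\sum_{i,j}\iint_{Q_T\times\mathcal{K}}\widehat{b}\,|\mathbb{D}\boldsymbol{u}|^{p-2}\mathbb{D}_iu^j\mathbb{D}_i\psi^j$ for every $\boldsymbol{\Phi}\in\mathcal{F}_0^\infty$, which is the heart of the argument. Writing $\mathbf{A}(y,\tau,\xi)=a(y,\tau)\xi+b(y,\tau)|\xi|^{p-2}\xi$ (monotone and of $p$-growth in $\xi\in\mathbb{R}^{N\times N}$) and $\widehat{\boldsymbol{T}}=\widehat{a}\,\mathbb{D}\boldsymbol{u}+\widehat{\boldsymbol{\sigma}}$ for the weak-$\Sigma$ limit of $\mathbf{A}^\varepsilon(\nabla\boldsymbol{u}_\varepsilon)$, I would first test (\ref{0.1}) with $\theta\boldsymbol{u}_\varepsilon$, $\theta\in\mathcal{C}_0^\infty(0,T)$ with $\theta\ge0$: since $b_I(\boldsymbol{u}_\varepsilon,\boldsymbol{u}_\varepsilon,\boldsymbol{u}_\varepsilon)=0$ and no time-boundary terms appear, this yields the energy identity $\int_{Q_T}\theta\,\mathbf{A}^\varepsilon(\nabla\boldsymbol{u}_\varepsilon):\nabla\boldsymbol{u}_\varepsilon\,dxdt=\frac{1}{2}\int_{Q_T}\theta'\rho^\varepsilon|\boldsymbol{u}_\varepsilon|^2\,dxdt+\int_{Q_T}\theta\rho^\varepsilon f^\varepsilon(\cdot,\boldsymbol{u}_\varepsilon)\cdot\boldsymbol{u}_\varepsilon\,dxdt$, whose right-hand side converges (strong $L^1$-convergence of $|\boldsymbol{u}_\varepsilon|^2$ and the source convergence from above) to precisely the value the limit problem with flux $\widehat{\boldsymbol{\sigma}}$ assigns to the admissible test pair $(\theta\boldsymbol{u}_0,\theta\boldsymbol{u}_1)$ — here the corrector time-term drops out because $\rho\,\overline{\partial}/\partial\tau$ is skew-adjoint, so $[\rho\boldsymbol{u}_1,\partial_\tau\boldsymbol{u}_1]=0$, and the convective term vanishes by antisymmetry — whence $\lim_\varepsilon\int_{Q_T}\theta\,\mathbf{A}^\varepsilon(\nabla\boldsymbol{u}_\varepsilon):\nabla\boldsymbol{u}_\varepsilon=\iint_{Q_T\times\mathcal{K}}\theta\,\widehat{\boldsymbol{T}}:\mathbb{D}\boldsymbol{u}$. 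Next, for any $\boldsymbol{\Psi}\in\mathcal{F}_0^\infty$ with recovery field $\boldsymbol{\Psi}_\varepsilon$ (its gradient strongly $\Sigma$-converging to $\mathbb{D}\boldsymbol{\Psi}$ and bounded in $L^\infty$, so that $\mathbf{A}^\varepsilon(\nabla\boldsymbol{\Psi}_\varepsilon)$ strongly $\Sigma$-converges to $\widehat{\mathbf{A}}(\mathbb{D}\boldsymbol{\Psi})$), I would expand $0\le\int_{Q_T}\theta\,(\mathbf{A}^\varepsilon(\nabla\boldsymbol{u}_\varepsilon)-\mathbf{A}^\varepsilon(\nabla\boldsymbol{\Psi}_\varepsilon)):(\nabla\boldsymbol{u}_\varepsilon-\nabla\boldsymbol{\Psi}_\varepsilon)$ and pass to the limit, the four products being controlled by the identity just obtained and by Theorem~\ref{t3.3}; this gives $0\le\iint_{Q_T\times\mathcal{K}}\theta\,(\widehat{\boldsymbol{T}}-\widehat{\mathbf{A}}(\mathbb{D}\boldsymbol{\Psi})):(\mathbb{D}\boldsymbol{u}-\mathbb{D}\boldsymbol{\Psi})$. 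Extending this by density to all $\mathbb{D}\boldsymbol{\Phi}$ with $\boldsymbol{\Phi}\in\mathbb{F}_0^{1,p}$, then taking $\mathbb{D}\boldsymbol{\Phi}=\mathbb{D}\boldsymbol{u}-t\boldsymbol{\Theta}$ and letting $t\downarrow0$ (Minty's lemma), one obtains $\iint_{Q_T\times\mathcal{K}}\widehat{\boldsymbol{T}}:\mathbb{D}\boldsymbol{\Phi}=\iint_{Q_T\times\mathcal{K}}\widehat{\mathbf{A}}(\mathbb{D}\boldsymbol{u}):\mathbb{D}\boldsymbol{\Phi}$ for every $\boldsymbol{\Phi}\in\mathcal{F}_0^\infty$, i.e. the $\widehat{\boldsymbol{\sigma}}$-term is the $p$-Laplacian part of $\widehat{a}_I(\boldsymbol{u},\boldsymbol{\Phi})$. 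Substituting back into the limit equation yields (\ref{6.27}).

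The hard part will be the energy step of the preceding paragraph: matching the (convergent) right-hand side of the $\theta\boldsymbol{u}_\varepsilon$-test identity with $\iint_{Q_T\times\mathcal{K}}\theta\,\widehat{\boldsymbol{T}}:\mathbb{D}\boldsymbol{u}$ requires using $(\theta\boldsymbol{u}_0,\theta\boldsymbol{u}_1)$ itself as a test pair in the limit problem although $\boldsymbol{u}_1$ is only of finite energy, which forces one to exploit both the density $\mathcal{F}_0^\infty\subset\mathbb{F}_0^{1,p}$ and the skew-adjointness of the unbounded operator $\rho\,\overline{\partial}/\partial\tau$ of Subsection~4.1; and carrying the Minty argument through the two-scale framework then demands careful bookkeeping of which sequences converge strongly and which only weakly in the $\Sigma$-sense, together with the continuity of $\xi\mapsto\mathbf{A}(y,\tau,\xi)$ under strong $\Sigma$-convergence of $L^\infty$-bounded sequences. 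All the remaining passages to the limit are routine applications of the results of Section~3.
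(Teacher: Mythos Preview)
Your proposal is correct and follows the paper's overall strategy: insert oscillating test functions $\boldsymbol{\Phi}_\varepsilon=\boldsymbol{\psi}_0+\varepsilon\widetilde{\boldsymbol{\psi}}_1^\varepsilon$, pass to the limit term by term using the $\Sigma$-convergence tools of Section~3, and identify the nonlinear flux by a Minty--Browder argument. Your handling of the inertial, convective, source, and pressure terms matches the paper's almost exactly (you invoke Lemma~\ref{l5.1} directly on $\partial_\tau\widetilde{\boldsymbol{\psi}}_1$, whereas the paper quotes Corollary~\ref{c5.1}; these are equivalent here).

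The one genuine difference is in how the Minty step is executed. The paper writes down the monotonicity inequality
\[
\int_{Q_T}\bigl(b^\varepsilon|D\boldsymbol{u}_\varepsilon|^{p-2}D\boldsymbol{u}_\varepsilon-b^\varepsilon|D\boldsymbol{\Phi}_\varepsilon|^{p-2}D\boldsymbol{\Phi}_\varepsilon\bigr)\cdot(D\boldsymbol{u}_\varepsilon-D\boldsymbol{\Phi}_\varepsilon)\,dxdt\ge 0
\]
and passes to the limit in it \emph{directly}, delegating the delicate $\limsup$ control of the diagonal term $\int b^\varepsilon|D\boldsymbol{u}_\varepsilon|^{p}$ to the argument of \cite[Theorem~3.1]{NgWou1}; this yields the two-scale Minty inequality (\ref{6.30}) without ever needing to test the limit problem with $(\boldsymbol{u}_0,\boldsymbol{u}_1)$ itself. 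Your route instead derives an exact energy identity by testing (\ref{0.1}) with $\theta\boldsymbol{u}_\varepsilon$, passes to the limit in its right-hand side, and then matches the result against the limit equation evaluated at the pair $(\theta\boldsymbol{u}_0,\theta\boldsymbol{u}_1)$; as you correctly flag, this last step forces you to first establish $\boldsymbol{u}_1\in L^p(Q_T;\mathcal{W}_{\Div})$ (so that the pair is admissible and the skew-adjointness $[\rho\partial_\tau\boldsymbol{u}_1,\boldsymbol{u}_1]=0$ is available) and to extend the limit equation by density to $\mathbb{F}_0^{1,p}$. Both routes are standard and lead to the same conclusion; the paper's is shorter because it outsources the energy bound, while yours is more self-contained but carries the extra bookkeeping you identify as ``the hard part''. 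Note also that the paper first derives the intermediate identity (\ref{6.26}) with a pressure term by allowing $\boldsymbol{\psi}_0\in\mathcal{C}_0^\infty(Q_T)^N$ not divergence-free, and only then specializes to $\mathcal{F}_0^\infty$; you bypass this by taking $\boldsymbol{\psi}_0\in\mathcal{V}$ from the outset, which is fine for the statement of Theorem~\ref{t6.3} but would need to be revisited for Theorem~\ref{t6.4}.
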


\begin{proof}
From the equality $\Div\boldsymbol{u}_{\varepsilon }=0$ we easily derive $%
\Div\boldsymbol{u}_{0}=0$ and $\overline{\Div}_{y}\boldsymbol{u}_{1}=0$. In
order to show that $\boldsymbol{u}=(\boldsymbol{u}_{0},\boldsymbol{u}%
_{1})\in \mathbb{F}_{0}^{1,p}$ we need to verify that $\partial \boldsymbol{u%
}_{1}(x,t)/\partial \tau \in \mathcal{B}_{AP}^{p^{\prime }}(\mathbb{R}_{\tau
};(\mathcal{B}_{\Div}^{1,p})^{\prime })$ for a.e. $(x,t)\in Q_{T}$. But this
will be done later. Now, we first need to show that $\boldsymbol{u}$ solve (%
\ref{6.27}). For that, let $\boldsymbol{\Phi }=(\boldsymbol{\psi }_{0},%
\boldsymbol{\psi }_{1})$ with $\boldsymbol{\psi }_{0}\in \mathcal{C}%
_{0}^{\infty }(Q_{T})^{N}$ and $\boldsymbol{\psi }_{1}=\varrho _{y}^{N}(%
\boldsymbol{\psi })\equiv (\varrho _{y}(\psi _{j}))_{1\leq j\leq N}\in 
\mathcal{C}_{0}^{\infty }(Q_{T})\otimes (\mathcal{D}_{AP}(\mathbb{R}_{\tau
})\otimes \mathcal{V}_{\text{div}}$), $\varrho _{y}$ being the canonical
mapping from $B_{AP}^{p}(\mathbb{R}_{y}^{N})$ into $\mathcal{B}_{AP}^{p}(%
\mathbb{R}_{y}^{N})$; define $\boldsymbol{\Phi }_{\varepsilon }$ as follows: 
\begin{equation*}
\boldsymbol{\Phi }_{\varepsilon }(x,t)=\boldsymbol{\psi }_{0}(x,t)+%
\varepsilon \boldsymbol{\psi }\left( x,t,\frac{x}{\varepsilon },\frac{t}{%
\varepsilon ^{2}}\right) \text{ for }(x,t)\in Q_{T}\text{.}
\end{equation*}%
Then we have $\boldsymbol{\Phi }_{\varepsilon }\in \mathcal{C}_{0}^{\infty
}(Q_{T})^{N}$ and, using $\boldsymbol{\Phi }_{\varepsilon }$ as a test
function in the variational formulation of (\ref{0.1}) we get 
\begin{eqnarray}
&&-\int_{Q_{T}}\rho ^{\varepsilon }\boldsymbol{u}_{\varepsilon }\cdot \frac{%
\partial \boldsymbol{\Phi }_{\varepsilon }}{\partial t}dxdt+\int_{Q_{T}}a^{%
\varepsilon }D\boldsymbol{u}_{\varepsilon }\cdot D\boldsymbol{\Phi }%
_{\varepsilon }dxdt+\int_{0}^{T}b_{I}(\boldsymbol{u}_{\varepsilon },%
\boldsymbol{u}_{\varepsilon },\boldsymbol{\Phi }_{\varepsilon })dt
\label{6.28} \\
&&+\int_{Q_{T}}b^{\varepsilon }\left\vert D\boldsymbol{u}_{\varepsilon
}\right\vert ^{p-2}D\boldsymbol{u}_{\varepsilon }\cdot D\boldsymbol{\Phi }%
_{\varepsilon }dxdt-\int_{Q_{T}}q_{\varepsilon }\Div\boldsymbol{\Phi }%
_{\varepsilon }dxdt  \notag \\
&=&\int_{0}^{T}\left( \rho ^{\varepsilon }f^{\varepsilon }(t,\boldsymbol{u}%
_{\varepsilon }(t)),\boldsymbol{\Phi }_{\varepsilon }\right) dt.  \notag
\end{eqnarray}%
We pass to the limit in (\ref{6.28}) by considering each term separately.
First we have 
\begin{equation*}
\int_{Q_{T}}\rho ^{\varepsilon }\boldsymbol{u}_{\varepsilon }\cdot \frac{%
\partial \boldsymbol{\Phi }_{\varepsilon }}{\partial t}dxdt=\int_{Q_{T}}\rho
^{\varepsilon }\boldsymbol{u}_{\varepsilon }\cdot \frac{\partial \boldsymbol{%
\psi }_{0}}{\partial t}dxdt+\varepsilon \int_{Q_{T}}\rho ^{\varepsilon }%
\boldsymbol{u}_{\varepsilon }\cdot \frac{\partial \boldsymbol{\psi }%
^{\varepsilon }}{\partial t}dxdt
\end{equation*}%
In view of \ Corollary \ref{c5.1} we have that 
\begin{equation*}
\varepsilon \int_{Q_{T}}\rho ^{\varepsilon }\boldsymbol{u}_{\varepsilon
}\cdot \frac{\partial \boldsymbol{\psi }^{\varepsilon }}{\partial t}%
dxdt\rightarrow \int_{Q_{T}}\left[ \rho \boldsymbol{u}_{1},\frac{\partial 
\boldsymbol{\psi }_{1}}{\partial \tau }\right] dxdt\text{ when }E^{\prime
}\ni \varepsilon \rightarrow 0.
\end{equation*}%
On the other hand, due to (\ref{6.23}) we get 
\begin{equation*}
\int_{Q_{T}}\rho ^{\varepsilon }\boldsymbol{u}_{\varepsilon }\cdot \frac{%
\partial \boldsymbol{\psi }_{0}}{\partial t}dxdt\rightarrow \mathfrak{M}%
_{y}(\rho )\int_{Q_{T}}\boldsymbol{u}_{0}\cdot \frac{\partial \boldsymbol{%
\psi }_{0}}{\partial t}dxdt\text{ as }E^{\prime }\ni \varepsilon \rightarrow
0.
\end{equation*}%
Hence, as $E^{\prime }\ni \varepsilon \rightarrow 0$, 
\begin{equation*}
\int_{Q_{T}}\boldsymbol{u}_{\varepsilon }\cdot \frac{\partial \boldsymbol{%
\Phi }_{\varepsilon }}{\partial t}dxdt\rightarrow \mathfrak{M}_{y}(\rho
)\int_{Q_{T}}\boldsymbol{u}_{0}\cdot \frac{\partial \boldsymbol{\psi }_{0}}{%
\partial t}dxdt+\int_{Q_{T}}\left[ \rho \boldsymbol{u}_{1},\frac{\partial 
\boldsymbol{\psi }_{1}}{\partial \tau }\right] dxdt.
\end{equation*}

Next, it is an usual well known fact that (see e.g., \cite[Lemma 3.5]%
{Douanla1}), the convergence result (\ref{6.24}) together with the weak $%
\Sigma $-convergence of the sequence $(D\boldsymbol{\Phi }_{\varepsilon
})_{\varepsilon }$ to $\mathbb{D}\boldsymbol{\Phi }$, imply 
\begin{equation*}
\int_{Q_{T}}a^{\varepsilon }D\boldsymbol{u}_{\varepsilon }\cdot D\boldsymbol{%
\Phi }_{\varepsilon }dxdt\rightarrow \sum_{i,j,k=1}^{N}\iint_{Q_{T}\times 
\mathcal{K}}\widehat{a}_{ij}(s,s_{0})\mathbb{D}_{j}u^{k}\mathbb{D}%
_{i}v^{k}dxdtd\beta .
\end{equation*}%
Considering the next term, we use the monotonicity property to have 
\begin{equation}
\int_{Q_{T}}(b^{\varepsilon }\left\vert D\boldsymbol{u}_{\varepsilon
}\right\vert ^{p-2}D\boldsymbol{u}_{\varepsilon }-b^{\varepsilon }\left\vert
D\boldsymbol{\Phi }_{\varepsilon }\right\vert ^{p-2}D\boldsymbol{\Phi }%
_{\varepsilon })\cdot (D\boldsymbol{u}_{\varepsilon }-D\boldsymbol{\Phi }%
_{\varepsilon })dxdt\geq 0\text{.}  \label{6.29}
\end{equation}%
Owing to the estimate (\ref{2.2}) we infer that 
\begin{equation*}
\sup_{\varepsilon >0}\left\Vert b^{\varepsilon }\left\vert D\boldsymbol{u}%
_{\varepsilon }\right\vert ^{p-2}D\boldsymbol{u}_{\varepsilon }\right\Vert
_{L^{p^{\prime }}(Q_{T})^{N\times N}}^{p^{\prime }}<\infty ,
\end{equation*}%
so that, from Theorem \ref{t3.1}, there exist a function $\chi \in
L^{p^{\prime }}(Q_{T};\mathcal{B}_{AP}^{p^{\prime }}(\mathbb{R}%
^{N}))^{N\times N}$ and a subsequence of $E^{\prime }$ not relabeled, such
that $b^{\varepsilon }\left\vert D\boldsymbol{u}_{\varepsilon }\right\vert
^{p-2}D\boldsymbol{u}_{\varepsilon }\rightarrow \chi $ in $L^{p^{\prime
}}(Q_{T})^{N\times N}$-weak $\Sigma $ as $E^{\prime }\ni \varepsilon
\rightarrow 0$. We therefore argue as in the proof of \cite[Theorem 3.1]%
{NgWou1} (see also \cite[Theorem 3.2]{CPAA}) to pass to the limit in (\ref%
{6.29}) (as $E^{\prime }\ni \varepsilon \rightarrow 0$) and get 
\begin{equation}
\iint_{Q_{T}\times \mathcal{K}}(\widehat{\chi }-\widehat{b}\left\vert 
\mathbb{D}\boldsymbol{\Phi }\right\vert ^{p-2}\mathbb{D}\Phi ))\cdot (%
\mathbb{D}\boldsymbol{u}-\mathbb{D}\boldsymbol{\Phi })dxdtd\beta \geq 0
\label{6.30}
\end{equation}%
for any $\Phi \in \mathcal{F}_{0}^{\infty }$ where, as above, $\mathbb{D}%
\boldsymbol{u}=D\boldsymbol{u}_{0}+\partial \widehat{\boldsymbol{u}}_{1}$ ($%
\boldsymbol{u}=(\boldsymbol{u}_{0},\boldsymbol{u}_{1})$) and $\mathbb{D}%
\boldsymbol{\Phi }=D\boldsymbol{\psi }_{0}+\partial \widehat{\boldsymbol{%
\psi }}_{1}$. By the density of $\mathcal{F}_{0}^{\infty }$ in $\mathbb{F}%
_{0}^{1,p}$ and by a continuity argument, (\ref{6.30}) still holds for $%
\boldsymbol{\Phi }\in \mathbb{F}_{0}^{1,p}$. Hence by taking $\boldsymbol{%
\Phi }=\boldsymbol{u}+\lambda \boldsymbol{v}$ for $\boldsymbol{v}=(%
\boldsymbol{v}_{0},\boldsymbol{v}_{1})\in \mathbb{F}_{0}^{1,p}$ and $\lambda
>0$ arbitrarily fixed, we get 
\begin{equation*}
\lambda \iint_{Q_{T}\times \mathcal{K}}(\widehat{\chi }-\widehat{b}(\cdot ,%
\mathbb{D}\boldsymbol{u}+\lambda \mathbb{D}\boldsymbol{v}))\cdot \mathbb{D}%
\boldsymbol{v}dxdtd\bar{\mathbb{P}}d\beta \geq 0\;\text{for all }\boldsymbol{%
v}\in \mathbb{F}_{0}^{1,p}.
\end{equation*}%
Therefore by a mere routine, we deduce that $\chi =b\left\vert D\boldsymbol{u%
}_{0}+\overline{D}_{y}\boldsymbol{u}_{1}\right\vert ^{p-2}(D\boldsymbol{u}%
_{0}+\overline{D}_{y}\boldsymbol{u}_{1})$.

The next point to check is to compute the $\lim_{\varepsilon \rightarrow
0}\int_{0}^{T}b_{I}(\boldsymbol{u}_{\varepsilon },\boldsymbol{u}%
_{\varepsilon },\boldsymbol{\Phi }_{\varepsilon })dt$. We claim that, as $%
\varepsilon \rightarrow 0$, 
\begin{equation*}
\int_{0}^{T}b_{I}(\boldsymbol{u}_{\varepsilon },\boldsymbol{u}_{\varepsilon
},\boldsymbol{\Phi }_{\varepsilon })dt\rightarrow \widehat{b}_{I}(%
\boldsymbol{u},\boldsymbol{u},\boldsymbol{\Phi }).
\end{equation*}%
Indeed, by the strong convergence result (\ref{6.23}) in conjunction with
the Theorem \ref{t3.3}, our claim is justified.

We obviously have that 
\begin{eqnarray*}
\int_{0}^{T}\left( \rho ^{\varepsilon }f^{\varepsilon }(t,\boldsymbol{u}%
_{\varepsilon }(t)),\boldsymbol{\Phi }_{\varepsilon }\right) dt
&=&\int_{Q_{T}}\rho ^{\varepsilon }f^{\varepsilon }(t,\boldsymbol{u}%
_{\varepsilon }(t))\cdot \boldsymbol{\Phi }_{\varepsilon }dxdt \\
&=&\int_{Q_{T}}\rho ^{\varepsilon }f^{\varepsilon }(t,\boldsymbol{u}%
_{\varepsilon }(t))\cdot \boldsymbol{\psi }_{0}dxdt \\
&&+\varepsilon \int_{Q_{T}}\rho ^{\varepsilon }f^{\varepsilon }(t,%
\boldsymbol{u}_{\varepsilon }(t))\cdot \boldsymbol{\psi }^{\varepsilon }dxdt.
\end{eqnarray*}%
It can easily be seen that the second integral of the right-hand side goes
to zero with $\varepsilon \in E^{\prime }$. For the first one, taking $\rho
^{\varepsilon }\boldsymbol{\psi }_{0}$ as a test function and using Lemma %
\ref{l4.4}, we are led to 
\begin{eqnarray*}
\int_{Q_{T}}\rho ^{\varepsilon }f^{\varepsilon }(\cdot ,\boldsymbol{u}%
_{\varepsilon })\cdot \boldsymbol{\psi }_{0}dxdt &\rightarrow
&\iint_{Q_{T}\times \mathcal{K}}\widehat{\rho }\widehat{f}(\cdot ,%
\boldsymbol{u}_{0})\cdot \boldsymbol{\psi }_{0}dxdtd\beta \\
&=&\int_{Q_{T}}\mathfrak{M}(\rho f(\cdot ,\boldsymbol{u}_{0}))\cdot 
\boldsymbol{\psi }_{0}dxdt.
\end{eqnarray*}%
Now, on the basis of (\ref{6.25}) there is no difficulty in showing that 
\begin{equation*}
\int_{Q_{T}}q_{\varepsilon }\Div\boldsymbol{\Phi }_{\varepsilon
}dxdt\rightarrow \iint_{Q_{T}\times \mathcal{K}}\widehat{q}\Div\boldsymbol{%
\psi }_{0}dxdtd\beta .
\end{equation*}%
It emerges from the above study that $\boldsymbol{u}=(\boldsymbol{u}_{0},%
\boldsymbol{u}_{1})$ and $q_{0}=\mathfrak{M}(q):=\int_{\mathcal{K}}\widehat{q%
}d\beta $ satisfies 
\begin{equation}
\begin{array}{l}
-\mathfrak{M}_{y}(\rho )\int_{Q_{T}}\boldsymbol{u}_{0}\cdot \boldsymbol{\psi 
}_{0}^{\prime }dxdt-\int_{Q_{T}}\left[ \rho \boldsymbol{u}_{1},\frac{%
\partial \boldsymbol{\psi }_{1}}{\partial \tau }\right] dxdt+\widehat{a}_{I}(%
\boldsymbol{u},\boldsymbol{\Phi })+\widehat{b}_{I}(\boldsymbol{u},%
\boldsymbol{u},\boldsymbol{\Phi }) \\ 
-\int_{Q_{T}}q_{0}\Div\boldsymbol{\psi }_{0}dxdt=\int_{Q_{T}}\mathfrak{M}%
(\rho f(\cdot ,\boldsymbol{u}_{0}))\cdot \boldsymbol{\psi }_{0}dxdt%
\end{array}
\label{6.26}
\end{equation}%
for all $\boldsymbol{\Phi }=(\boldsymbol{\psi }_{0},\boldsymbol{\psi }_{1})$
as above. Choosing in particular $\boldsymbol{\Phi }\in \mathcal{F}%
_{0}^{\infty }$ we end up with $(\boldsymbol{u}_{0},\boldsymbol{u}_{1})$
solving (\ref{6.27}) since $\Div\boldsymbol{\psi }_{0}=0$ in that case.
\end{proof}

Problem (\ref{6.27}) is the \emph{global homogenized problem}. In order to
derive the homogenized problem, we need to uncouple problem (\ref{6.27}). As
we can see, (\ref{6.27}) is equivalent to the following system (\ref{6.36})-(%
\ref{6.37}): 
\begin{equation}
\left\{ 
\begin{array}{l}
-\int_{Q_{T}}\left[ \rho \boldsymbol{u}_{1},\frac{\partial \boldsymbol{\psi }%
_{1}}{\partial \tau }\right] dxdt+\iint_{Q_{T}\times \mathcal{K}}\widehat{a}%
\mathbb{D}\boldsymbol{u}\cdot \partial \widehat{\boldsymbol{\psi }}%
_{1}dxdtd\beta \\ 
+\iint_{Q_{T}\times \mathcal{K}}\widehat{b}\left\vert \mathbb{D}\boldsymbol{u%
}\right\vert ^{p-2}\mathbb{D}\boldsymbol{u}\cdot \partial \widehat{%
\boldsymbol{\psi }}_{1}dxdtd\beta =0\text{\ for all }\boldsymbol{\psi }%
_{1}\in \mathcal{C}_{0}^{\infty }(Q_{T})\otimes \lbrack \mathcal{D}_{A_{\tau
}}(\mathbb{R}_{\tau })\otimes \mathcal{V}_{\Div}]\text{;}%
\end{array}%
\right.  \label{6.36}
\end{equation}%
\begin{equation}
\left\{ 
\begin{array}{l}
-\mathfrak{M}_{y}(\rho )\int_{Q_{T}}\boldsymbol{u}_{0}\cdot \boldsymbol{\psi 
}_{0}^{\prime }dxdt+\widehat{a}_{I}(\boldsymbol{u},(\boldsymbol{\psi }%
_{0},0))+\widehat{b}_{I}(\boldsymbol{u},\boldsymbol{u},(\boldsymbol{\psi }%
_{0},0)) \\ 
\ \ \ \ =\int_{Q_{T}}\mathfrak{M}(\rho f(\cdot ,\boldsymbol{u}_{0}))\cdot 
\boldsymbol{\psi }_{0}dxdt\text{ for all }\boldsymbol{\psi }_{0}\in \mathcal{%
C}_{0}^{\infty }(0,T)\otimes \mathcal{V}\text{.}%
\end{array}%
\right.  \label{6.37}
\end{equation}%
It is an easy matter to deal with (\ref{6.36}). In fact, fix $\xi \in 
\mathbb{R}^{N\times N}$ and consider the following cell problem: 
\begin{equation}
\left\{ 
\begin{array}{l}
\mathbf{\boldsymbol{\pi }}(\xi )\in \mathcal{B}_{AP}^{p}(\mathbb{R}_{\tau };%
\mathcal{B}_{\text{div}}^{1,p}): \\ 
\left[ \rho \frac{\partial \mathbf{\boldsymbol{\pi }}}{\partial \tau },%
\boldsymbol{w}\right] +\int_{\mathcal{K}}\widehat{b}\left\vert \xi +\partial 
\widehat{\mathbf{\boldsymbol{\pi }}}(\xi )\right\vert ^{p-2}(\xi +\partial 
\widehat{\mathbf{\boldsymbol{\pi }}}(\xi ))\cdot \partial \widehat{%
\boldsymbol{w}}d\beta \\ 
+\int_{\mathcal{K}}\widehat{a}(\xi +\partial \widehat{\mathbf{\boldsymbol{%
\pi }}}(\xi ))\cdot \partial \widehat{\boldsymbol{w}}d\beta =0\text{\ for
all }\boldsymbol{w}\in \mathcal{B}_{A_{\tau }}^{p}(\mathbb{R}_{\tau };%
\mathcal{B}_{\text{div}}^{1,p})\text{.}%
\end{array}%
\right.  \label{6.38}
\end{equation}%
Assume for a while that the solution to (\ref{6.38}) exists. Then, due to
the properties of the functions $a$ and $b$, the linear functional 
\begin{equation*}
\boldsymbol{w}\mapsto \int_{\mathcal{K}}\widehat{b}\left\vert \xi +\partial 
\widehat{\mathbf{\boldsymbol{\pi }}}(\xi )\right\vert ^{p-2}(\xi +\partial 
\widehat{\mathbf{\boldsymbol{\pi }}}(\xi ))\cdot \partial \widehat{%
\boldsymbol{w}}d\beta +\int_{\mathcal{K}}\widehat{a}(\xi +\partial \widehat{%
\mathbf{\boldsymbol{\pi }}}(\xi ))\cdot \partial \widehat{\boldsymbol{w}}%
d\beta
\end{equation*}%
defined on $\mathcal{B}_{AP}^{p}(\mathbb{R}_{\tau };\mathcal{B}_{\text{div}%
}^{1,p})$ is continuous, so that the linear functional $\boldsymbol{w}%
\mapsto \left[ \rho \frac{\partial \mathbf{\boldsymbol{\pi }}}{\partial \tau 
},\boldsymbol{w}\right] $, defined on $\mathcal{E}_{\text{div}}=\mathcal{D}%
_{AP}(\mathbb{R}_{\tau })\otimes \mathcal{V}_{\Div}$, is continuous when
endowing $\mathcal{E}_{\text{div}}$ with the $\mathcal{V}$-norm. From the
density of $\mathcal{E}_{\text{div}}$ in $\mathcal{B}_{AP}^{p}(\mathbb{R}%
_{\tau };\mathcal{B}_{\text{div}}^{1,p})$ (see Lemma \ref{l6.2}) we get
readily $\rho \frac{\partial \mathbf{\boldsymbol{\pi }}}{\partial \tau }\in 
\mathcal{B}_{AP}^{p^{\prime }}(\mathbb{R}_{\tau };(\mathcal{B}_{\text{div}%
}^{1,p})^{\prime })$, so that \textbf{$\boldsymbol{\pi }$} lies in $\mathcal{%
W}_{\text{div}}$. Since $\mathcal{E}_{\text{div}}$ is dense in $\mathcal{W}_{%
\text{div}}$, Eq. (\ref{6.38}) still holds for $\boldsymbol{w}\in \mathcal{W}%
_{\text{div}}$. But if \textbf{$\boldsymbol{\pi }$}$_{1}=$\textbf{$%
\boldsymbol{\pi }$}$_{1}(\xi )$ and \textbf{$\boldsymbol{\pi }$}$_{2}=$%
\textbf{$\boldsymbol{\pi }$}$_{2}(\xi )$ are two solutions of (\ref{6.38})
then, setting \textbf{$\boldsymbol{\pi }$}$=$\textbf{$\boldsymbol{\pi }$}$%
_{1}-$\textbf{$\boldsymbol{\pi }$}$_{2}$, 
\begin{equation*}
\left\{ 
\begin{array}{l}
\int_{\mathcal{K}}\widehat{a}\partial \widehat{\mathbf{\boldsymbol{\pi }}}%
\cdot \partial \widehat{\boldsymbol{w}}d\beta +\int_{\mathcal{K}}(\widehat{b}%
(\cdot ,\xi +\partial \widehat{\mathbf{\boldsymbol{\pi }}}_{1})-\widehat{b}%
(\cdot ,\xi +\partial \widehat{\mathbf{\boldsymbol{\pi }}}_{2}))\cdot
\partial \widehat{\boldsymbol{w}}d\beta =0 \\ 
\text{for all }\boldsymbol{w}\in \mathcal{W}_{\text{div}}\text{.}%
\end{array}%
\right.
\end{equation*}%
Taking the particular test function $\boldsymbol{w}=$\textbf{$\boldsymbol{%
\pi }$} and using the equality $\left[ \rho \frac{\partial \mathbf{%
\boldsymbol{\pi }}}{\partial \tau },\mathbf{\boldsymbol{\pi }}\right] =0$,
we are led to 
\begin{equation*}
\int_{\mathcal{K}}\widehat{a}\partial \widehat{\mathbf{\boldsymbol{\pi }}}%
\cdot \partial \widehat{\mathbf{\boldsymbol{\pi }}}d\beta +\int_{\Delta (A)}(%
\widehat{b}(\cdot ,\xi +\partial \widehat{\mathbf{\boldsymbol{\pi }}}_{1})-%
\widehat{b}(\cdot ,\xi +\partial \widehat{\mathbf{\boldsymbol{\pi }}}%
_{2}))\cdot \partial \widehat{\mathbf{\boldsymbol{\pi }}}d\beta =0,
\end{equation*}%
and using once again the properties of $a$ and $b$, we get 
\begin{equation*}
\nu _{0}\int_{\mathcal{K}}\left\vert \partial \widehat{\mathbf{\boldsymbol{%
\pi }}}\right\vert ^{2}d\beta +\nu _{1}\int_{\mathcal{K}}\left\vert \partial 
\widehat{\mathbf{\boldsymbol{\pi }}}\right\vert ^{p}d\beta =0,
\end{equation*}%
which gives $\partial \widehat{\boldsymbol{\pi }}=0$, or equivalently, $%
\overline{D}_{y}\boldsymbol{\pi }=0$. It then follows that $\boldsymbol{\pi }%
=0$ since it belong to $\mathcal{B}_{AP}^{p}(\mathbb{R}_{\tau };(\mathcal{B}%
_{\#AP}^{1,p}(\mathbb{R}_{y}^{N}))^{N})$. Returning to the existence issue,
Eq. (\ref{6.38}) admits a solution; see e.g., \cite[Proposition III.4.1]%
{Showalter}.

Now, choosing $\boldsymbol{\psi }_{1}=\varphi \otimes \boldsymbol{w}$ in (%
\ref{6.36}) with $\varphi \in \mathcal{C}_{0}^{\infty }(Q_{T})$ and $%
\boldsymbol{w}\in \lbrack \mathcal{D}_{AP}(\mathbb{R}_{\tau })\otimes 
\mathcal{V}_{\Div}]$, and using the arbitrariness of $\varphi$ we get the
following equation: 
\begin{equation}
-\left[ \rho \boldsymbol{u}_{1},\frac{\partial \boldsymbol{w}}{\partial \tau 
}\right] +\int_{\mathcal{K}}\widehat{a}\mathbb{D}\boldsymbol{u}\cdot
\partial \widehat{\boldsymbol{w}}d\beta +\int_{\mathcal{K}}\widehat{b}%
\left\vert \mathbb{D}\boldsymbol{u}\right\vert ^{p-2}\mathbb{D}\boldsymbol{u}%
\cdot \partial \widehat{\boldsymbol{w}}d\beta =0\text{\ for all }\boldsymbol{%
w}\in \mathcal{D}_{AP}(\mathbb{R}_{\tau })\otimes \mathcal{V}_{\Div}\text{.}
\label{6.39}
\end{equation}%
Coming back to (\ref{6.38}) we choose there $\xi =D\boldsymbol{u}_{0}(x,t)$
(for arbitrarily fixed $(x,t)\in Q_{T}$). Comparing the resulting equation
with (\ref{6.39}) and using the density of $\mathcal{D}_{AP}(\mathbb{R}%
_{\tau })\otimes \mathcal{V}_{\Div}$ in $\mathcal{B}_{AP}^{p}(\mathbb{R}%
_{\tau };\mathcal{B}_{\text{div}}^{1,p})$ we get by the uniqueness of the
solution of (\ref{6.38}) that $\boldsymbol{u}_{1}=\boldsymbol{\pi }(D%
\boldsymbol{u}_{0})$, where $\boldsymbol{\pi }(D\boldsymbol{u}_{0})$ stands
for the function $(x,t)\mapsto \boldsymbol{\pi }(D\boldsymbol{u}_{0}(x,t))$
considered as acting from $Q_{T}$ into $\mathcal{W}_{\text{div}}$. This
shows the uniqueness of the solution of (\ref{6.36}) and also allows us to
conclude that $\boldsymbol{u}_{1}$ lies in $L^{p}(Q_{T};\mathcal{W}_{\Div})$
so that $(\boldsymbol{u}_{0},\boldsymbol{u}_{1})\in \mathbb{F}_{0}^{1,p}$.
This concludes the proof of Theorem \ref{t6.3}.

In order to get the macroscopic homogenized problem, we set, for $\xi \in 
\mathbb{R}^{N\times N}$, 
\begin{equation*}
M(\xi )=\int_{\mathcal{K}}\widehat{b}\left\vert \xi +\partial \widehat{%
\boldsymbol{\pi }}(\xi )\right\vert ^{p-2}(\xi +\partial \widehat{%
\boldsymbol{\pi }}(\xi ))d\beta
\end{equation*}%
and 
\begin{equation*}
\mathsf{m}\xi =\int_{\mathcal{K}}\widehat{a}(\xi +\partial \widehat{%
\boldsymbol{\pi }}(\xi ))d\beta .
\end{equation*}%
Next, consider the following problem: Find $(\boldsymbol{u}_{0},q_{0})$
solution to 
\begin{equation}
\left\{ 
\begin{array}{l}
\mathfrak{M}_{y}(\rho )\frac{\partial \boldsymbol{u}_{0}}{\partial t}-\Div(%
\mathsf{m}D\boldsymbol{u}_{0})+M(D\boldsymbol{u}_{0}))+B(\boldsymbol{u}%
_{0})+\nabla q_{0}=\mathfrak{M}(\rho f(\cdot ,\boldsymbol{u}_{0})) \\ 
\Div\boldsymbol{u}_{0}=0\text{ in }Q_{T} \\ 
\boldsymbol{u}_{0}=0\text{ on }\partial Q\times (0,T) \\ 
\boldsymbol{u}_{0}(x,0)=\boldsymbol{u}^{_{0}}(x)\text{ in }Q.%
\end{array}%
\right.  \label{6.40}
\end{equation}%
Problem (\ref{6.40}) is the \emph{macroscopic homogenized problem}. In
contrast to (\ref{6.27}) it only involves the macroscopic limit $(%
\boldsymbol{u}_{0},q_{0})$ of the sequence $(\boldsymbol{u}_{\varepsilon
},q_{\varepsilon })$ of solutions to (\ref{0.1}). Thus it describes the
macroscopic behavior of the above-mentioned sequence as the heterogeneities
go to zero.

Returning to Problem (\ref{6.40}) we know that $\mathfrak{M}_{y}(\rho )$ is
a positive constant (see Assumption (\textbf{A5}) at the beginning of this
section). Moreover one may easily check that the functional $\boldsymbol{u}%
\mapsto \mathfrak{M}(\rho f(\cdot ,\boldsymbol{u}))$ maps $\mathbb{H}$ into $%
L^{2}(Q_{T})^{N}$, and is Lipschitz continuous (this is due to the
properties of $f$ and $\rho $). Thus, assuming the existence of the velocity 
$\boldsymbol{u}_{0}$ in the above problem, it comes by \cite[Proposition 5]%
{Simon2} that the pressure $q_{0}$ will exist. We can hence argue as in the
proof of Theorem \ref{t1.1} to get the existence of a solution to (\ref{6.40}%
). We may therefore state the macroscopic homogenization result.

\begin{theorem}
\label{t6.4}Assume that \textbf{(A1)-(A5)} hold. Let $1+\frac{2N}{N+2}\leq
p<\infty $. For each $\varepsilon >0$ let $(\boldsymbol{u}_{\varepsilon
},q_{\varepsilon })$ be a solution of \emph{(\ref{0.1})}. Then there exists
a subsequence of $(\boldsymbol{u}_{\varepsilon },q_{\varepsilon
})_{\varepsilon >0}$ (not relabeled) which converges strongly in $%
L^{2}(Q_{T})^{N}$ (with respect to the first component $\boldsymbol{u}%
_{\varepsilon }$) and weakly in $L^{p^{\prime }}(Q_{T})$ (with respect to $%
q_{\varepsilon }$) to the solution of \emph{(\ref{6.40})}. Moreover any
limit point in $L^{2}(Q_{T})^{N}\times L^{p^{\prime }}(Q_{T})$ (in the above
sense) of $(\boldsymbol{u}_{\varepsilon },q_{\varepsilon })_{\varepsilon >0}$
is a solution to Problem \emph{(\ref{6.40})}.
\end{theorem}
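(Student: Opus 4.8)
The plan is to read off the macroscopic equation (\ref{6.40}) from the global homogenization result together with the compactness already in hand. First I would record that, by Lemma~\ref{l2.1} and Lemma~\ref{l2.0'}, the families $(\boldsymbol{u}_\varepsilon)_{\varepsilon>0}$ and $(q_\varepsilon)_{\varepsilon>0}$ are bounded in $L^p(0,T;\mathbb{V})\cap L^\infty(0,T;\mathbb{H})$ and in $L^{p'}(Q_T)$, respectively, while Proposition~\ref{p2.2} makes the first family relatively compact in $L^2(Q_T)^N$. Consequently, from any subsequence of $(\boldsymbol{u}_\varepsilon,q_\varepsilon)_{\varepsilon>0}$ one can extract a further subsequence $E'$ along which $\boldsymbol{u}_\varepsilon\to\boldsymbol{u}_0$ strongly in $L^2(Q_T)^N$ and weakly in $L^p(0,T;\mathbb{V})$, and $q_\varepsilon\to q$ weakly $\Sigma$ in $L^{p'}(Q_T)$ (Theorem~\ref{t3.1}), with an associated corrector $\boldsymbol{u}_1$ furnished by Theorem~\ref{t3.2}. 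It therefore suffices to prove that every such limit pair $(\boldsymbol{u}_0,q_0)$, with $q_0=\mathfrak{M}(q)$, is a solution of (\ref{6.40}): the existence of at least one convergent subsequence then yields the first assertion, and the ``any limit point'' statement is immediate.

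Feeding these limits into Theorem~\ref{t6.3}, the couple $(\boldsymbol{u}_0,\boldsymbol{u}_1)$ solves the global problem (\ref{6.27}), equivalently the decoupled pair (\ref{6.36})--(\ref{6.37}). The decisive step is to identify $\boldsymbol{u}_1$ from (\ref{6.36}): fixing $(x,t)$ and comparing (\ref{6.36}) with the cell problem (\ref{6.38}) taken at $\xi=D\boldsymbol{u}_0(x,t)$, the well-posedness of (\ref{6.38})---existence by \cite[Proposition III.4.1]{Showalter}, and uniqueness from the coercivity of the $a$-part, the monotonicity of the $b$-part and the skew-adjointness of $\rho\,\overline{\partial}/\partial\tau$, all already established above, together with the density of $\mathcal{E}_{\Div}=\mathcal{D}_{AP}(\mathbb{R}_\tau)\otimes\mathcal{V}_{\Div}$ in $\mathcal{B}_{AP}^p(\mathbb{R}_\tau;\mathcal{B}_{\Div}^{1,p})$ (Lemma~\ref{l6.2})---forces $\boldsymbol{u}_1(x,t)=\boldsymbol{\pi}(D\boldsymbol{u}_0(x,t))$ for a.e.\ $(x,t)\in Q_T$, so that $\boldsymbol{u}_1\in L^p(Q_T;\mathcal{W}_{\Div})$ and $\mathbb{D}\boldsymbol{u}=D\boldsymbol{u}_0+\partial\widehat{\boldsymbol{\pi}}(D\boldsymbol{u}_0)$.

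I would then substitute this into (\ref{6.37}), where $\boldsymbol{\psi}_1=0$, so that $\mathbb{D}_i\psi_0^k=\partial\psi_0^k/\partial x_i$ carries no dependence on $\mathcal{K}$; pulling these factors out of the $\mathcal{K}$-integrals and using the very definitions of $\mathsf{m}$ and $M$, the functional $\widehat{a}_I(\boldsymbol{u},(\boldsymbol{\psi}_0,0))$ collapses to $\int_{Q_T}(\mathsf{m}D\boldsymbol{u}_0+M(D\boldsymbol{u}_0))\cdot D\boldsymbol{\psi}_0\,dxdt$, while $\widehat{b}_I(\boldsymbol{u},\boldsymbol{u},(\boldsymbol{\psi}_0,0))$ is the time-integrated convective term $\int_0^T b_I(\boldsymbol{u}_0,\boldsymbol{u}_0,\boldsymbol{\psi}_0)\,dt$ associated with $B$, and the first term of (\ref{6.37}) is the weak realization of $\mathfrak{M}_y(\rho)\,\partial\boldsymbol{u}_0/\partial t$. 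Thus (\ref{6.37}) is precisely the weak formulation of (\ref{6.40}) tested against $\boldsymbol{\psi}_0\in\mathcal{C}_0^\infty(0,T)\otimes\mathcal{V}$; by density of $\mathcal{C}_0^\infty(0,T)\otimes\mathcal{V}$ in $L^p(0,T;\mathbb{V})$ it holds for all divergence-free test functions, and the pressure $q_0$ (with $q_0=\mathfrak{M}(q)$, using (\ref{6.25})) is recovered exactly as in the proof of Theorem~\ref{t1.1} via the de Rham--Simon argument. Since a solution of (\ref{6.40}) exists by the same reasoning as for Theorem~\ref{t1.1}, the proof is complete; and should (\ref{6.40}) admit a unique solution, the whole family $(\boldsymbol{u}_\varepsilon,q_\varepsilon)_{\varepsilon>0}$ converges, not merely a subsequence.

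I expect the main obstacle to be the identification $\boldsymbol{u}_1=\boldsymbol{\pi}(D\boldsymbol{u}_0)$: it hinges on uniqueness for the nonlinear monotone cell problem (\ref{6.38}) with the unbounded skew-adjoint drift $\rho\,\overline{\partial}/\partial\tau$, and on having a test-function class in (\ref{6.36}) rich enough, hence on the density Lemma~\ref{l6.2}. The passage to the limit in the nonlinear $b$-term was already handled inside the proof of Theorem~\ref{t6.3} by a Minty-type monotonicity argument, so nothing new is needed there, and the remaining manipulations---collapsing the cell averages into $\mathsf{m}$ and $M$, and recovering $q_0$---are routine.
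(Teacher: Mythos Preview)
Your proposal is correct and follows essentially the same route as the paper: invoke the compactness and $\Sigma$-convergence already established, apply Theorem~\ref{t6.3}, identify $\boldsymbol{u}_1=\boldsymbol{\pi}(D\boldsymbol{u}_0)$ via uniqueness in the cell problem (\ref{6.38}), and substitute back with $\boldsymbol{\psi}_1=0$ to read off (\ref{6.40}). The only cosmetic difference is that the paper works directly with (\ref{6.26}) (which already carries the pressure term $q_0=\mathfrak{M}(q)$) and tests against arbitrary $\boldsymbol{\psi}_0\in\mathcal{C}_0^\infty(Q_T)^N$, whereas you pass through the divergence-free formulation (\ref{6.37}) and then recover $q_0$ by a separate de~Rham--Simon step; both variants are equivalent.
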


\begin{proof}
Considering (\ref{6.26}), by a density argument, this equation still holds
for $\boldsymbol{\Phi }$ in $L^{p}(0,T;W_{0}^{1,p}(Q)^{N})\times L^{p}(Q_{T};%
\mathcal{W}_{\text{div}})$. So substituting $\boldsymbol{u}_{1}=\boldsymbol{%
\pi }(D\boldsymbol{u}_{0})$ in (\ref{6.26}) and choosing the particular $%
\boldsymbol{\Phi }=(\boldsymbol{\psi }_{0},\boldsymbol{\psi }_{1})$ with $%
\boldsymbol{\psi }_{1}=0$ and $\boldsymbol{\psi }_{0}\in \mathcal{C}%
_{0}^{\infty }(Q_{T})^{N}$, we end up (using some obvious computations) with
the result.
\end{proof}


\begin{thebibliography}{99}
\bibitem{Adams} R.A. Adams, Sobolev spaces, Academic Press, New York, 1975.

\bibitem{BBN} H. Bellout, F. Bloom, J. Ne\u{c}as, Young measure-valued
solutions for non-Newtonian incompressible fluids, Commun. Partial Diff.
Equ. \textbf{19} (1994) 1763--1803.

\bibitem{Besicovitch} A.S. Besicovitch, Almost periodic functions,
Cambridge, Dover Publications, 1954.

\bibitem{Blot} J. Blot, Oscillations presque p\'{e}riodiques forc\'{e}es d'%
\'{e}quations d'Euler-Lagrange, Bull. Soc. Math. France \textbf{122} (1994)
285--304.

\bibitem{Bohr} H. Bohr, Almost periodic functions, Chelsea, New York, 1947.

\bibitem{Bourbaki} N. Bourbaki, Topologie g\'{e}n\'{e}rale, Chap. 1--4,
Hermann, Paris, 1971.

\bibitem{10} A. Bourgeat, A. Mikeli\'{c}, S. Wright, Stochastic two-scale
convergence in the mean and applications, J. Reine Angew. Math. \textbf{456}
(1994) 19--51.

\bibitem{Caraballo1} T. Caraballo, J. Real, Navier-Stokes equations with
delays, Proc. R. Soc. Lond. A. \textbf{457} (2001) 2441--2453.

\bibitem{Caraballo2} T. Caraballo, J. Real, A.M. M\'{a}rquez,
Three-dimensional system of globally modified Navier-Stokes equations with
delay, Internat. J. Bifur. Chaos Appl. Sci. Engrg. \textbf{20} (2010)
2869--2883.

\bibitem{He} H. Cheng, Resonance and generalized Navier-Stokes equations,
Syst. Sci. Math. Sci. \textbf{13} (2000) 250--259.

\bibitem{Chou} C. Chou, Weakly almost periodic functions and
Fourier-Stieltjes algebras of locally compact groups, Trans. Amer. Math.
Soc. \textbf{274} (1982) 141--157.

\bibitem{Douanla} H. Douanla, Two-Scale convergence of elliptic spectral
problems with indefinite density function in perforated domains, To appear
in Asymtotic Analysis. (arXiv:1106.3907v1, 2011).

\bibitem{Douanla1} H. Douanla, N. Svanstedt, Reiterated homogenization of
linear eigenvalue problems in multiscale perforated domains beyond the
periodic setting, Commun. Math. Anal. \textbf{11} (2011) 61--93.

\bibitem{Douanla2} H. Douanla, N. Svanstedt, Homogenization of a nonlinear
elliptic problem with large nonlinear potential, Appl. Anal. \textbf{91}
(2012) 1205--1218.

\bibitem{DS} N. Dunford, J.T. Schwartz, Linear operators, Parts I and II,
Interscience Publishers, Inc., New York, 1958, 1963.

\bibitem{EfendievPankov} Y. Efendiev, A. Pankov, Homogenization of nonlinear
random parabolic operators, Adv. Differ. Equ. \textbf{10} (2005) 1235--1260.

\bibitem{Fefferman} C.H. Fefferman, Existence and smoothness of the
Navier--Stokes equations, Clay Mathematics Institute, Preprint.

\bibitem{Gunz} Q. Du, M.D. Gunzburger, Analysis of a Ladyzhenskaya model for
incompressible viscous flow, J. Math. Anal. Appl. \textbf{155} (1991) 21--45.

\bibitem{Jikov} V.V. Jikov, S.M. Kozlov, O.A. Oleinik, Homogenization of
differential operators and integral functionals, Springer-Verlag, Berlin,
1994.

\bibitem{Ladyz} O. Ladyzhenskaya, New equations for the description of the
viscous incompressible fluids and solvability in the large of the boundary
value problems for them, \textit{in} "Boundary value problems of
mathematical physics V", Amer. Math. Soc., Providence, RI, 1970.

\bibitem{Levitan} B.M. Levitan, Almost-Periodic Functions [in Russian],
Gostekhizdat, Moscow, 1953.

\bibitem{Lions} J.L. Lions, Quelques m\'{e}thodes de r\'{e}solution des probl%
\`{e}mes aux limites non lin\'{e}aires, Dunod, Paris, 1969.

\bibitem{Hom1} G. Nguetseng, Homogenization structures and applications I,
Z.\ Anal. Anwen. \textbf{22} (2003) 73--107.

\bibitem{CMP} G. Nguetseng, M. Sango, J.L. Woukeng, Reiterated ergodic
algebras and applications, Commun. Math. Phys. \textbf{300} (2010) 835--876.

\bibitem{NgWou1} G. Nguetseng, J.L. Woukeng, Deterministic homogenization of
parabolic monotone operators with time dependent coefficients, Electron. J.\
Differ. Equ. \textbf{2004} (2004) 1--23.

\bibitem{NgWou} G. Nguetseng, J.L. Woukeng, $\Sigma $-convergence of
nonlinear parabolic operators, Nonlin. Anal. TMA \textbf{66} (2007)
968--1004.

\bibitem{Novotny} A. Novotn\'{y}, I. Stra\v{s}kraba, Introduction to the
mathematical theory of compressible flow, Oxford Univ. Press, 2004.

\bibitem{NA} M. Sango, N. Svanstedt, J.L. Woukeng, Generalized Besicovitch
spaces and application to deterministic homogenization, Nonlin. Anal. TMA 
\textbf{74} (2011) 351--379.

\bibitem{RSW} P.A. Razafimandimby, M. Sango, J.L. Woukeng, Homogenization of
a stochastic nonlinear reaction-diffusion equation with a large reaction
term: The almost periodic framework J. Math. Anal. Appl. \textbf{394} (2012)
186--212.

\bibitem{AA} M. Sango, J.L. Woukeng, Stochastic two-scale convergence of an
integral functional, Asymptotic Anal. \textbf{73} (2011) 97--123.

\bibitem{DPDE} M. Sango, J.L. Woukeng, Stochastic sigma-convergence and
applications, Dynamics of PDEs \textbf{8} (2011) 261--310.

\bibitem{Showalter} R.E. Showalter, Monotone operators in Banach spaces and
nonlinear partial differential equations, \textit{in} Mathematical Surveys
and Monographs, Vol 48, AMS Providence, 1997.

\bibitem{LS} L. Schwartz, Th\'{e}orie des distributions, Hermann, Paris,
1966.

\bibitem{Simon1} J. Simon, On the identification $H=H^{\prime }$ in the
Lions theorem and a related inaccuracy, Ric. Mat. \textbf{59} (2010)
245--255.

\bibitem{Simon2} J. Simon, On the existence of the pressure for solutions of
the variational Navier-Stokes equations, J. Math. Fluid Mech. \textbf{1}
(1999) 225--234.

\bibitem{SM} J.S. Smagorinsky, General circulation model of the atmosphere,
Mon. Weather Rev. \textbf{91} (1963) 99--164.

\bibitem{ApplAnal} N. Svanstedt, J.L. Woukeng, Periodic homogenization of
strongly nonlinear reaction--diffusion equations with large reaction terms,
Applicable Analysis 2012, 1--22, iFirst.

\bibitem{Temam} R. Temam, Navier-Stokes equations: Theory and numerical
analysis, North-Holland, Amsterdam, 1984.

\bibitem{Vo-Khac} K. Vo-Khac, Etude des fonctions quasi-stationnaires et de
leurs applications aux \'{e}quations diff\'{e}rentielles op\'{e}%
rationnelles, M\'{e}moire Soc. Math. France \textbf{6} (1966) 3-175.

\bibitem{Wright1} S. Wright, Time-dependent Stokes flow through a randomly
perforated porous medium, Asympt. Anal. \textbf{23} (2000) 257--272.

\bibitem{CPAA} J.L. Woukeng, $\Sigma $-convergence and reiterated
homogenization of nonlinear parabolic operators, Commun. Pure Appl. Anal. 
\textbf{9} (2010) 1753--1789.
\end{thebibliography}
\end{document}